\documentclass[a4paper]{article}
\usepackage{amssymb,amsmath}
\usepackage{graphicx}
\usepackage{fullpage}
\usepackage{listings,relsize}
\usepackage{subfig}
\usepackage{setspace}
\usepackage{varwidth}
\usepackage{pdflscape}
\usepackage{amsthm}
\usepackage{algpseudocode}
\usepackage{multirow}
\usepackage{algorithm}
\usepackage{comment}
\usepackage{hyperref}
\usepackage[titletoc,title]{appendix}
\usepackage[mathlines]{lineno}

%

\newcommand{\Keywords}[1]{\par\noindent
{\small{\em Keywords\/}: #1}}

\newtheorem{theorem}{Theorem}[section]
\newtheorem{proposition}[theorem]{Proposition}%
\newtheorem{lemma}[theorem]{Lemma}%
\newtheorem{example}{Example}%
\newtheorem{assumption}{Assumption}[section]

\newtheorem{definition}{Definition}%
\newtheorem{condition}{Condition}

\title{A Reliability Theory of Compromise Decisions for Large-Scale Stochastic Programs}
\author{Shuotao Diao\footnote{Department of Industrial Engineering and Management Sciences, Northwestern University, Evanston IL, 60208, USA}, Suvrajeet Sen\footnotemark[\value{footnote}] \footnote{Epstein Department of Industrial and Systems Engineering, University of Southern California, Los Angeles CA, 90089, USA}\\
\texttt{shuotao.diao@northwestern.edu, s.sen@usc.edu}}
\date{First Version: \today  }
\begin{document}
\doublespacing
\maketitle
\begin{abstract}
Stochastic programming models can lead to very large-scale optimization problems for which it may be impossible to enumerate all possible scenarios. In such cases, one adopts a sampling-based solution methodology in which case the reliability of the resulting decisions may be suspect. For such instances, it is advisable to adopt methodologies that promote variance reduction. One such approach goes under a framework known as ``compromise decision", which requires multiple replications of the solution procedure. This paper studies the reliability of stochastic programming solutions resulting from the ``compromise decision" process. This process is characterized by minimizing an aggregation of objective function approximations across replications, presumably conducted in parallel. We refer to the post-parallel-processing problem as the problem of ``compromise decision". We quantify the reliability of compromise decisions by estimating the expectation and variance of the ``pessimistic distance" of sampled instances from the set of true optimal decisions. Such pessimistic distance is defined as an estimate of the largest possible distance of the solution of the sampled instance from the ``true" optimal solution set. The Rademacher average of instances is used to bound the sample complexity of the compromise decision.
\Keywords{Stochastic Programming, Sample Average Approximation, Rademacher Average} 
\end{abstract}
\section{Motivation}\label{sec:intro}

Artificial Intelligence, Machine Learning, and Data Science (AMD) have become major sources of applications of stochastic optimization. In general, the class of AMD models are often based on the idea of Empirical Risk Minimization (ERM) in which vast amounts of data can be characterized via a class of functions (e.g., affine, convex etc.) which can be used for predictive purposes.  Concomitantly, we are also witnessing the growth of decision and design (DD) optimization for new applications in sustainable energy, environment, and even health-care decisions using Sample Average Approximation (SAA) \cite{kleywegt2002sample}.  These two classes of ``applications'' (AMD and DD) tend to have slightly different algorithmic requirements:  AMD problems tend to seek {\it functions} which perform well across a range of plausible data using ERM, while DD seeks a finite dimensional {\it vector} which is predicted to perform well across a sampled collection of plausible scenarios using SAA.  Statistically speaking, ERM and SAA are cousins of each other, drawing from the same ``well'' of sample complexity theory developed within the statistical learning (SL) literature.  However, there are real-world implications associated with models of predictive and prescriptive analytics: in case of the former, small errors in predictions are often ``easily forgiven,'' (e.g., errors in predicting precipitation) whereas, errors in DD can often represent loss of coordination among multi-variate choices (e.g., different product lines), and can lead to operational havoc. Because coordination across multiple facets may contribute to much of the value-proposition of DD, it is important to have plans which dovetail well, even in the presence of significant uncertainty. 
For these reasons, choices which are recommended in the DD setting often call for greater reliability, especially because uncertainty has the potential to up-end the entire plan.  To accommodate such coordination, our prior work has put forward the concept of ``compromise decisions'' for various versions of stochastic programming (SP) structures (e.g., Stochastic LPs \cite{sen2016mitigating}, Stochastic Multi-Stage LPs \cite{xu2023compromise}, and Stochastic MIPs \cite{xu2023ensemble})).  While the specifics of compromise decisions for each class is designed to address the specific structure of the model, they are all intended to enhance the reliability of decisions.  However unlike the above papers, which are computationally motivated, this paper is envisioned as one providing {\it mathematical/statistical} foundations for the concept of Compromise Decisions.

                                
There are several challenges which present barriers to overcoming the goals outlined above: a)  Traditional SAA (as well as ERM) focuses on sample complexity based on probability estimates of approximation quality, b) it is not obvious how multiple sampling-based runs should be combined to provide one decision for the purposes of implementation, c) it is also burdensome to characterize the worst-case scenario (in distribution) of the estimated solution generated by a given SP algorithm. One recommendation (due to Nesterov) is to simply average outputs resulting from parallel runs (e.g., using multiple Stochastic Gradient Descent (SGD) runs).  The challenge here is that it is unclear whether the variability associated with decisions are to be considered in some way. In this paper, we plan to provide sample complexity of DD associated with convex SP models.  We expect that future papers will present more comprehensive theory to accommodate integer as well as other non-convex settings. 

\indent Importance sampling is one popular approach to reduce the variance associated with objective function estimates (of values as well as gradient/subgradient estimates) when using Monte Carlo sampling \cite{deo2023achieving}. It has been widely used to reduce variance in gradient estimates for portfolio  \cite{zhao2015stochastic, kawai2018optimizing} and credit risk optimization \cite{glasserman2005importance}. Kozmík and Morton \cite{kozmik2015evaluating} also propose an importance sampling methodology to reduce the variance of the upper bound estimate of optimal cost from the Stochastic Dual Dynamic Programming (SDDP) algorithms in a risk-averse setting. Carson and Maria \cite{carson1997simulation} summarized that the key to the success of importance sampling lies in the appropriate change of the probability measure used for rare event simulation.

Additionally, other popular variance reduction methods include linear control random variables method (\cite{shapiro2003monte}), in which a correlated random variable with mean zero is added to the objective function, as well as using common random numbers for variance reduction approach (\cite{Higle1998VarianceRA}, \cite{kleinman1999simulation}).  For a review of modern variance reduction techniques in stochastic optimization, we refer the readers to a survey by Homem-de-Mello and Bayraksan \cite{homem2014stochastic}. \\
\indent Other than the above-mentioned variance reduction techniques, Pasupathy and Song \cite{pasupathy2021adaptive} propose an adaptive sequential SAA framework where the inner loop stopping criterion depends on the optimality gap and pointwise variance estimates and the outer loop utilizes previously estimated solution as a warm start.   \\
\indent When memory and computational time are no longer the bottleneck in computation (e.g. \cite{zhang2015memory}), performing several replications of sampling procedures, perhaps in parallel, becomes an alternative to improve the objective function estimate and/or optimal solution estimate. Sen and Liu \cite{sen2016mitigating} propose a closed-loop methodology based on a composite function which aggregates the sum of piecewise linear objective function approximations (from multiple {\it independent runs}), together with a regularizing term which measures deviation from a sample average of decisions.  The above approximation is what \cite{sen2016mitigating} refers to as the compromise problem, and the optimum solution of that problem is referred to as the compromise decision.  The computational results reported in the aforementioned paper illustrates the power of aggregated approximations for stochastic linear programming problems, and their computational results demonstrate the variance reduction due to the compromise decision approach.

At approximately the same time as the publication of \cite{sen2016mitigating}, the International Conference on SP (ICSP-13 in 2016) was held in Buzios, Brazil where several SP luminaries made the case for recommending general-purpose methodologies which could support {\it low variance decisions} for SP problems.  During that session it was noted that while SP methodology was strongly supported by SAA-based theory, most SP algorithns were not designed to provide computational support for establishing the {\it reliability of decisions} proposed by SP software. 

\indent Motivated by the  ``disconnect'' between SP theory and computations, we aim to present a theory which is based on a small set of principles that can be used to explain the computational success reported in the above-mentioned papers. Indeed, we focus on studying the finite-sample complexity of the Compromise Decision approach to solve the following generic stochastic program
\begin{equation} \label{eq:generic sp}
    \min_{x \in X} \ f(x) \triangleq \mathbb{E}_{\tilde{\xi}}[F(x,\tilde{\xi})] \text{\footnotemark}, 
\end{equation}
\footnotetext{$\mathbb{E}_{\tilde{\xi}}[F(x,\tilde{\xi})] = \int_{\Omega} F(x,\tilde{\xi}(\omega) ) d \mathbb{P} = \int_{\Xi} F(x,\xi) \mu(d \xi)$.}
where $\tilde{\xi}: \Omega \mapsto \Xi \subset \mathbb{R}^d$ is a random variable with distribution $\mu$ defined on a probability space $(\Omega,\Sigma_{\Omega}, \mathbb{P})$, $X \subset \mathbb{R}^p$ is the feasible region of $x$, and $F: X \times \Xi \mapsto \mathbb{R}$ is a Carath\'eodorian function (i.e., continuous in $X$ and measurable for almost every $\xi \in \Xi$). 
 \\
\indent Given $m$ ($m \geq 2$) replications, we let $\xi_i^n$, $\hat{f}_n(x;\xi_i^n)$ and $\hat{x}(\xi_i^n)$ denote the sampled data sets (each of size $n$), objective function approximations (based on sampling-based estimates), and decision estimate in the $i^\text{th}$ replication of Monte Carlo sampling to recommend an approximate solution to the problem in (\ref{eq:generic sp}), respectively. The compromise decision problem for convex SP, based on $m$ replications is formulated as follows:
\begin{equation} \label{eq:generic compromise sp}
    \min_{x \in X} \ \frac{1}{m} \sum_{i=1}^m \hat{f}_n(x;\xi_i^n) + \frac{\rho}{2} \left\|x - \frac{1}{m} \sum_{i=1}^m \hat{x}_n(\xi_i^n) \right\|^2.
\end{equation}
The addition of the quadratic regularizer to the objective function has been widely used in SP algorithms such as the proximal point method \cite{rockafellar1976monotone}, proximal bundle method \cite{kiwiel1990proximity}, mirror descent method \cite{nemirovski2009robust}, regularized SD \cite{higle1994finite}, regularized SDDP \cite{asamov2018regularized, guigues2020regularized}, and more recently, SDLP \cite{doi:10.1137/19M1290735}. Furthermore, the use of the decision-based regularizer in (\ref{eq:generic compromise sp}) can be interpreted as one way of countering overfitting. SP models in our setting often involve multi-dimensional random variables and the sampling distribution $\hat{\mu}_N$ often involves $N$ which is so large, as to be untenable for constrained optimization. In this sense, the size of $n$ which we use in SP is relatively small as to be similar to the case of overfitting in statistics . 

\begin{algorithm}[!htbp]
\caption{Compromise Decision Approach} \label{alg:compromise decision approach}
\begin{algorithmic}
\State {\sl Initialization}: Set sample size $n \in \mathbb{Z}_+$, replication number $m \in \mathbb{Z}_+$, $\rho \in (0,+\infty)$, and an \textsl{Algorithm} $\Upsilon$ for the \textsl{replication} step. 
\State {\sl Replication} Step: For $i = 1,2,\ldots,m$, use \textsl{Algorithm} $\Upsilon$ compute estimated decision, $\hat{x}(\xi^n_i)$, and function estimation, $\hat{f}_n(x;\xi^n_i)$, of the $i^{\text{th}}$ replication. 
\State {\sl Aggregation} Step: Aggregate function estimates $\frac{1}{m} \sum_{i=1}^m \hat{f}_n(x;\xi_i^n)$ with certain augmentation. Aggregate estimated decisions as $\frac{1}{m} \sum_{i=1}^m \hat{x}_n(\xi_i^n)$. Obtain the $\epsilon$-optimal solution of the compromise decision problem in (\ref{eq:generic compromise sp}). 
\end{algorithmic}
\end{algorithm}

The compromise decision problem consists of an estimated aggregation of sampled value function approximations, and a penalty based on a distance to the average decision estimate. It can also be interpreted as performing one step of the proximal point method for solving $\frac{1}{m} \sum_{i=1}^m \hat{f}_n(x;\xi_i^n)$ given that the initial estimate is $\frac{1}{m} \sum_{i=1}^m \hat{x}_n(\xi_i^n)$. Let $\bar{x}_N(\xi^N) = \frac{1}{m} \sum_{i=1}^m \hat{x}_(\xi^n_i)$ and let $x^c_N(\xi^N)$ denote the optimal solution of (\ref{eq:generic compromise sp}). It is obvious that if $\bar{x}_N(\xi^N)$ and $x^c_N(\xi^N)$ agree, then both are optimal to $\min\limits_{x \in X} \ \frac{1}{m}\sum_{i=1}^m \hat{f}_n(x;\xi_i^n)$. This observation has been transformed into a stopping rule for compromise SD (\cite{sen2016mitigating}). In short, the Compromise Decision approach consists of two steps: a  {\sl replication} step and an {\sl aggregation} step. We summarize the Compromise Decision approach in Algorithm \ref{alg:compromise decision approach}. \\
\indent Motivated by the computational evidence of successful use of the Compromise Decision problem \cite{sen2016mitigating, xu2023compromise, xu2023ensemble}, this paper aims to expound on a mathematical foundation for those successes. In particular, we shall address the following questions which will support a mathematical (as opposed to computational) basis for Compromise Decisions: 
\begin{itemize}
    \item[1.] How should one quantify the reliability of the estimated decisions of stochastic programs? 
    \item[2.] What is the reliability of a compromise decision when a particular algorithm is used in each replication? 
    \item[3.] More specifically, what is the reliability of compromise decisions with SD algorithm involved in each replication of a stochastic QP?
\end{itemize}

The above agenda is not only novel, but it provides a theoretical justification for reliable decision-making using SP. It is important to recognize that in the absence of such a theory, sampling-based computational algorithms for SP (including traditional SDDP \cite{pereira1991multi}) are likely to lead to {\it both sub-optimality} and/or {\it lower reliability} (i.e.,  {\it greater variability}).  Computational evidence of such risks, especially for multi-stage SP, are presented in \cite{xu2023compromise}.

This paper is organized as follows. In section \ref{sec:sample complexity background}, we review Rademacher complexity and its use in bounding the sample complexity of point estimates of the objective function and its variance. In section \ref{sec:classic compromise SP}, we present both \textsl{exact} and \textsl{inexact} compromise decision problem formulation and its quantitative reliability. In section \ref{sec:algorithms compromise sp}, we discuss the sample complexity analysis of the compromise decision when a cutting-plane-type algorithm is used to solve each replication of the approximation problem. Finally, we provide the theoretical evidence of the success of using SD algorithms for the compromise decision problem in section \ref{sec:SD compromise sp}.

\subsection{Notations}
We let $\tilde{\xi} : \Omega \mapsto \Xi \subset \mathbb{R}^d$ denote a random vector defined on a probability space $(\Omega,\Sigma_{\Omega}, \mathbb{P})$ and let $\xi$ denote one realization of $\tilde{\xi}$. Let $\tilde{\xi}_1, \tilde{\xi}_2, \ldots, \tilde{\xi}_n$ denote independent and identically distributed (i.i.d.) copies of $\tilde{\xi}$. For $i = 1,2,\ldots,n$, we let $\xi_i$ denote the realization of $\tilde{\xi}_i$ and let $\xi^n \triangleq \{\xi_1,\xi_2,\ldots,\xi_n \}$ denote the set of realizations of $n$ i.i.d. copies of $\tilde{\xi}$. Let $X \subset \mathbb{R}^p$ be a nonempty compact set of decisions, and let $F: X \times \Xi \mapsto \mathbb{R}$ be a Carath\'eodorain function. We let $\Pr\{\cdot\}$ denote  
the probability of an event, and let $\|\cdot\|$ denote the Euclidean norm.\\
\indent Without further specification, we let $n$ denote the sample size of each replication and let $m$ denote the number of replications. We let $\xi^n_i$ denote the sample set with size $n$ in the $i^{\text{th}}$ replication. In particular, we let $\xi^n_i = \{\xi_{(i-1)n + j} \}_{j=1}^n$. We let $N = mn$ denote the total sample size and let $\xi^N = \cup_{i=1}^m \xi^n_i$ denote the mega-sample set. Without loss of generality, we consider the case in which the sample size for each replication is the same, although it is straightforward to extend the analysis to the case of heterogeneous sample sizes among the replications. \\
\indent As for the notations in the Rademacher complexity, we let $\tilde{\sigma}_1, \tilde{\sigma}_2, \ldots, \tilde{\sigma}_n$ be i.i.d. random variables with $\tilde{\sigma}_i$ for $i = 1,2,\ldots, n$ being equally likely to be $1$ or $-1$. That is, $\Pr(\tilde{\sigma} = 1) = \Pr(\tilde{\sigma} = -1) = \frac{1}{2}$. Furthermore, we require that $\tilde{\sigma}_i$ are independent of $\tilde{\xi}$ for all $i = 1,2,\ldots, n$. \\
\indent To study $\epsilon$-optimality of a solution, we shall define the following metric to measure the \textsl{pessimistic} distance between two sets (see \cite{ermoliev1991normalized,ermoliev2013sample, pasupathy2021adaptive} for similar uses).
\begin{definition}[Pessimistic Distance] \label{def:pessimistic distance}
    Let $A$ and $B$ be two sets from an Euclidean space. The pessimistic distance of $A$ to $B$ is defined as follows: 
    \begin{equation} \label{eq:Delta Set}
    \Delta(A,B) \triangleq \sup_{a \in A} \inf_{b \in B} ~ \|a - b\|.
\end{equation}
\end{definition}
An interpretation of $\Delta(A,B)$ is that it is the largest distance from a point of set $A$ to set $B$. For instance, if $A$ is the set of possible estimated solutions and $B$ is the set of optimal solutions. Then $\Delta(A,B)$ is the farthest distance from the estimated solution to the optimal solution set.\\
\indent The following theorem describes the Lipschitzian behavior of the $\epsilon$-optimal solution set in terms of the metric defined in \eqref{eq:Delta Set}. \\
 \begin{theorem} \label{theorem:lipschitz continuity of solution set}
\cite{ermoliev1991normalized} Assume that $X$ is a nonempty compact convex set and $f: X \mapsto \mathbb{R}$ is a lower semicontinuous convex function. Let the following definitions hold: $D_X = \max_{x, x' \in X} \|x - x' \|$; $\theta^* = \min_{x \in X} f(x)$; $\epsilon' > \epsilon > 0$, $X_{\epsilon} = \{ x \in X: f(x) \leq \theta^* + \epsilon\}$, and $X_{\epsilon'} = \{ x \in X: f(x) \leq \theta^* + \epsilon' \}$. Then the following holds:
\begin{equation*}
    \Delta (X_{\epsilon'}, X_{\epsilon}) \leq \frac{\epsilon' - \epsilon}{\epsilon} D_X.
\end{equation*}
\end{theorem}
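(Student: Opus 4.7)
My plan is to prove the bound by an explicit convex-combination construction: for any point in the larger sublevel set $X_{\epsilon'}$, I exhibit a nearby point in $X_\epsilon$ whose distance to the original is controlled by $D_X$ and the gap $\epsilon' - \epsilon$. Since $\Delta(X_{\epsilon'}, X_\epsilon)$ is a supremum over $X_{\epsilon'}$, a uniform bound on this construction yields the theorem.

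The first step is to pick an unconstrained minimizer $x^\star \in \arg\min_{x \in X} f(x)$, which exists because $X$ is nonempty compact and $f$ is lower semicontinuous. Note $f(x^\star) = \theta^\star$, so $x^\star \in X_\epsilon$. Now fix an arbitrary $x' \in X_{\epsilon'}$ and, for $\lambda \in [0,1]$, set $x_\lambda = \lambda x^\star + (1 - \lambda) x'$. Convexity of $X$ keeps $x_\lambda \in X$, and convexity of $f$ gives
\begin{equation*}
f(x_\lambda) \leq \lambda f(x^\star) + (1 - \lambda) f(x') \leq \theta^\star + (1 - \lambda)\epsilon'.
\end{equation*}
So $x_\lambda \in X_\epsilon$ whenever $(1-\lambda)\epsilon' \leq \epsilon$, i.e.\ $\lambda \geq (\epsilon' - \epsilon)/\epsilon'$.

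The second step is to choose $\lambda^* = (\epsilon' - \epsilon)/\epsilon'$ (the smallest admissible value, which minimizes the distance from $x'$). Then $x_{\lambda^*} \in X_\epsilon$ and
\begin{equation*}
\inf_{x \in X_\epsilon} \|x' - x\| \leq \|x' - x_{\lambda^*}\| = \lambda^* \|x' - x^\star\| \leq \lambda^* D_X = \frac{\epsilon' - \epsilon}{\epsilon'} D_X.
\end{equation*}
Since $\epsilon < \epsilon'$, the factor $\frac{\epsilon' - \epsilon}{\epsilon'}$ is bounded by $\frac{\epsilon' - \epsilon}{\epsilon}$, so the inequality implies the stated bound. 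As $x' \in X_{\epsilon'}$ was arbitrary, taking the supremum in the definition of $\Delta(X_{\epsilon'}, X_\epsilon)$ completes the proof.

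There is no real obstacle here; the argument is essentially a one-line application of Jensen's inequality along the segment from $x'$ to an unconstrained minimizer, followed by a pigeonhole choice of the step size $\lambda^*$. The only point worth pausing on is that the natural bound produced by this argument, $\frac{\epsilon' - \epsilon}{\epsilon'} D_X$, is actually tighter than the one stated; the theorem's weaker form is recovered by the trivial monotonicity $1/\epsilon' \leq 1/\epsilon$. Existence of $x^\star$ (used to anchor the segment inside $X_\epsilon$) is the only place where the compactness and lower semicontinuity hypotheses are invoked.
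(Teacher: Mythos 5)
Your proof is correct. The paper does not actually supply an argument for this result---it simply defers to \cite[Theorem 3.11]{ermoliev1991normalized}---so your self-contained derivation is a genuine addition rather than a restatement. The convex-combination construction anchored at an exact minimizer $x^\star$ is sound: existence of $x^\star$ follows from compactness and lower semicontinuity, $x_\lambda \in X$ from convexity of $X$, the sublevel estimate $f(x_\lambda) \leq \theta^\star + (1-\lambda)\epsilon'$ from convexity of $f$, and the choice $\lambda^* = (\epsilon'-\epsilon)/\epsilon'$ gives $\|x' - x_{\lambda^*}\| \leq \frac{\epsilon'-\epsilon}{\epsilon'}D_X$, which is in fact sharper than the stated bound and implies it via $1/\epsilon' \leq 1/\epsilon$. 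Your observation that the constant can be improved to $\frac{\epsilon'-\epsilon}{\epsilon'}$ is worth keeping; since the theorem is used downstream with $\epsilon'$ of the form $\epsilon$ plus sampling errors, the tighter denominator would propagate to marginally better constants throughout Sections 3--5, though it does not change any of the asymptotic rates.
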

\proof
See \cite[Theorem 3.11]{ermoliev1991normalized}.  
\endproof
The results of Theorem \ref{theorem:lipschitz continuity of solution set} relate the perturbation of the objective function (e.g., estimation error from sampling) to the $\epsilon$-solution set. It will be one of the major tools for studying the reliability of the compromise decisions. When $\epsilon' = \epsilon > 0$, $\Delta (X_{\epsilon'}, X_{\epsilon}) = 0$ and $\frac{\epsilon' - \epsilon}{\epsilon} D_X =0$. Hence, the inequality in Theorem \ref{theorem:lipschitz continuity of solution set} still holds when $\epsilon' = \epsilon > 0$. \\
\indent Finally, we summarize key notations and their use in computation and/or analysis in Appendix \ref{appendix:table of notations}. 

\subsection{Contributions} 
The contributions of this paper are two-fold: (1) We propose a unifying framework to compute compromise decisions by aggregating information from multiple replications of SP runs; (2) We provide a quantitative way to measure the reliability of the compromise decision. \\ 
\indent Here, we summarize our contribution to the reliability of the compromise decisions when a class of SP methods are involved in solving each replication in Table \ref{tab:summary:reliability of compromise decision}. We say that if one compromise decision has better reliability than another compromise decision, then the former one has a faster convergence rate in both expectation and variance of the pessimistic distance (in Definition \ref{def:pessimistic distance}) to the ($\epsilon$-)optimal solution set. Thus we use a 2-tuple to describe the reliability of the compromise decision. In particular, the first element (of the 2-tuple) will denote the expectation of the pessimistic distance, whereas, the second element of the 2-tuple measures the variance of the pessimistic distance to the ($\epsilon$-)optimal solution set.  Due to the sublinear convergence rate of SD for SQQP problems, we observe that the compromise decision using SD (in section \ref{sec:SD compromise sp}) has a higher reliability than the one using SAA (in section \ref{sec:classic compromise SP}) or cutting-plane-type methods (in section \ref{sec:algorithms compromise sp}). It is also expected that deterministic cutting-plane methods (such as Benders Decomposition or L-shaped method) and SAA have the same reliability because a deterministic method can only deliver what is permissible within the SAA setup.  
\begin{table}[!htbp]
    \centering
    \begin{tabular}{|c|c|c|}
        \hline
         Method in the {\sl replication} step&  Problem type & Reliability\\
         \hline
         SAA & Convex & $\left(O(\frac{1}{n^\lambda}), O(\frac{1}{mn^\lambda} + \frac{1}{n^{2\lambda}})) \right), \ \lambda \in (0, \frac{1}{2})$\\
         \hline
         Cutting-plane-type methods & Convex & Same as above\\
         \hline
         SD & Two-stage SQQP & $\left(O(\frac{1}{n}), O(\frac{1}{mn} + \frac{1}{n^2})\right)$\\
         \hline
    \end{tabular}
    \caption{Reliability of Compromise Decisions}
    \label{tab:summary:reliability of compromise decision}
\end{table}

\section{Background on Sample Complexity: Point Estimates and Rademacher Average} \label{sec:sample complexity background}

In order to set the stage for the goals outlined earlier, we begin by reviewing the notion of Rademacher average and its use in bounding a sample average approximation of the objective function and a sample variance of the random cost function.  

As mentioned by \cite{bartlett2002rademacher}, ``Rademacher complexity is commonly used to describe the data-dependent complexity of a function class". One key advantage of (empirical) Rademacher average (or complexity) is that it can be measured from a finite sample set (see \cite{bartlett2002rademacher} for i.i.d. cases, and \cite{mohri2008rademacher} for non-i.i.d. cases). As a result, it can be used to estimate the finite-sample error of a function class. Rademacher average has been widely used in neural networks (\cite{bartlett2017spectrally}), support vector machine (\cite{sun2011multi}), and decision trees (\cite{kaariainen2003rademacher}). 

The common finite-sample approximation of \eqref {eq:generic sp} is known as
the sample average approximation (SAA), and is written as follows.
\begin{equation} \label{eq:classic SP:SAA}
    \min_{x \in X} f_n(x;\xi^n) \triangleq \frac{1}{n} \sum_{i=1}^n F(x,\xi_i).
\end{equation}
Since we will study multiple replications of sampling, we write the sample set explicitly in the argument of the $f_n(x;\cdot)$ to distinguish different sample sets. A similar notational style will apply to sample variance, estimated solutions and compromise decisions. 

The variance of the random cost function, $F(x,\tilde{\xi})$, parameterized by the decision $x$, is defined as 
\begin{equation*}
    \text{Var}[F(x,\tilde{\xi})] \triangleq \mathbb{E}_{\tilde{\xi}} \left[\left(F(x,\tilde{\xi}) - f(x)\right)^2 \right].
\end{equation*}
The unbiased estimate of the variance of $F(x,\tilde{\xi})$ is formulated as 
\begin{equation} \label{eq:classic SP:variance estimate}
    s^2_n(x;\xi^n) \triangleq \frac{1}{n-1} \sum_{i=1}^n \left[F(x,\xi_i) - \hat{f}_n(x) \right]^2. 
\end{equation}
We let $Y$ denote a convex compact set in $\mathbb{R}$, and define a compound function $H: X \times Y \times \Xi \mapsto \mathbb{R}$ with $H(x,y,\xi) = (F(x,\xi) - y)^2$. As suggested in \cite{ermoliev2013sample}, the variance of $F(x,\tilde{\xi})$ can be written as a compound function as follows:  
\begin{equation*}
  \text{Var}[F(x,\tilde{\xi})] = \mathbb{E}_{\tilde{\xi}}\left[H(x,\mathbb{E}_{\tilde{\xi}}[F(x,\tilde{\xi})],\tilde{\xi})\right] = \mathbb{E}_{\tilde{\xi}}\left[(F(x,\tilde{\xi}) - \mathbb{E}_{\tilde{\xi}}[F(x,\tilde{\xi})])^2 \right].
\end{equation*}
Furthermore, the sample variance, $s_n^2(x)$, can be rewritten as the sum of compound functions below.
\begin{equation} \label{eq:sample variance:compound}
    s_n^2(x;\xi^n) = \frac{1}{n-1} \sum_{i=1}^n H(x,\frac{1}{n} F(x,\xi_i), \xi_i).
\end{equation}
Throughout the paper, we make the following assumptions: 
\begin{assumption} \label{assumption:part 1}
 Let $X \subset \mathbb{R}^p$ be a nonempty compact set of decisions, and let $F: X \times \Xi \mapsto \mathbb{R}$ be a Carath\'eodorain function. 
\begin{enumerate}
    \item[1.] $X \subset \mathbb{R}^p$ is a nonempty compact convex set contained in a cube whose edge-length is $D$.
    \item[2.] $F(x,\xi)$ is H\"older continuous in $x$ with constant $L_F$ and $\gamma \in (0,1]$ for all $\xi \in \Xi$. That is, $|F(x,\xi) - F(y,\xi)| \leq L_F \|x - y\|^\gamma$ for all $x, y \in X$ and $\xi \in \Xi$.  
    \item[3.] There exists $M_F \in (0,\infty)$ such that $\sup_{x \in X, \xi \in \Xi} |F(x,\xi)| < M_F$. Let $Y \subset \mathbb{R}$ be $Y \triangleq [-M_F, M_F]$.  
\end{enumerate}
\end{assumption}
We note that the assumption of boundedness of the feasible region and the objective function is common in the SP literature \cite{shapiro2005complexity, nemirovski2009robust, ermoliev2013sample}. Also, the H\"older continuity condition of the objective function is a generalization of its Lipschitzian counterpart. The introduction of the soundness parameters and H\"older continuity-related parameters are later used to bound the Rademacher average of the random cost function and its variance.\\
\indent Next we introduce the definition of the Rademacher average of a finite set of vectors. 
\begin{definition}[\cite{boucheron2005theory}] \label{def:rademacher average of finite set}
    Let $\tilde{\sigma}_1, \tilde{\sigma}_2, \ldots, \tilde{\sigma}_n$ be i.i.d. random variables with $\tilde{\sigma}_i$ for $i = 1,2,\ldots, n$ being equally likely to be $1$ or $-1$ (i.e., $\Pr(\tilde{\sigma}_i = 1) = \Pr(\tilde{\sigma}_i = -1) = \frac{1}{2}$ for $i \in \{1,\ldots,n\}$). Let $A \subset \mathbb{R}^n$ be a bounded set of vectors $a = (a_1, \ldots,a_n)$, the Rademacher average associated with $A$ is defined below:
    \begin{equation}
        R_n(A) = \mathbb{E}_{\tilde{\sigma}} \left[\sup_{a \in A} \frac{1}{n} \left|\sum_{i=1}^n \tilde{\sigma}_i a_i \right| \right].
    \end{equation}
\end{definition}
The following lemma yields an upper bound of the Rademacher average of a finite set. Note that it is a modification of \cite[Theorem 3]{boucheron2005theory} which requires a slight re-orientation. 
\begin{lemma} \label{lemma:rademacher:discrete}
If $A = \{a^{(1)},\ldots,a^{(N)} \} \subset \mathbb{R}^n$ is a finite set, then 
\begin{equation}
    R_n(A) \leq \max_{j = 1,\ldots,N} \|a^{(j)} \| \frac{\sqrt{2 \log{{2}N}}}{n}
\end{equation}
\end{lemma}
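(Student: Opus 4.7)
My plan is to prove this as a variant of Massart's finite class lemma, with a preliminary symmetrization step that accounts for the absolute value appearing inside the supremum in Definition \ref{def:rademacher average of finite set}.

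First, I would get rid of the absolute value. For each fixed realization of $\tilde{\sigma}$, one has $|\sum_{i=1}^n \tilde{\sigma}_i a_i^{(j)}| = \max\{\sum_i \tilde{\sigma}_i a_i^{(j)}, \sum_i \tilde{\sigma}_i (-a_i^{(j)})\}$, so if I define the symmetrized set $A' = A \cup (-A) = \{a^{(1)},\ldots,a^{(N)}, -a^{(1)}, \ldots, -a^{(N)}\}$, which has at most $2N$ elements, then
\begin{equation*}
    \sup_{a \in A}\left|\sum_{i=1}^n \tilde{\sigma}_i a_i\right| = \sup_{a \in A'} \sum_{i=1}^n \tilde{\sigma}_i a_i.
\end{equation*}
Note that negation preserves Euclidean norm, so $\max_{a \in A'} \|a\| = \max_{j=1,\ldots,N}\|a^{(j)}\|$. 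Denote this common value by $R$.

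Next, I would apply a Chernoff/exponential moment argument. For any $\lambda > 0$, Jensen's inequality applied to $\exp(\lambda \cdot)$ followed by the ``sup is bounded by sum'' inequality gives
\begin{equation*}
    \exp\left(\lambda \,\mathbb{E}_{\tilde{\sigma}}\left[\sup_{a \in A'}\sum_{i=1}^n \tilde{\sigma}_i a_i\right]\right) \leq \mathbb{E}_{\tilde{\sigma}}\left[\exp\left(\lambda \sup_{a \in A'}\sum_{i=1}^n \tilde{\sigma}_i a_i\right)\right] \leq \sum_{a \in A'} \mathbb{E}_{\tilde{\sigma}}\left[\exp\left(\lambda \sum_{i=1}^n \tilde{\sigma}_i a_i\right)\right].
\end{equation*}
For each $a \in A'$, independence of the Rademacher variables together with Hoeffding's bound $\mathbb{E}[\exp(\lambda \tilde{\sigma}_i a_i)] \leq \exp(\lambda^2 a_i^2 / 2)$ yields $\mathbb{E}[\exp(\lambda \sum_i \tilde{\sigma}_i a_i)] \leq \exp(\lambda^2 \|a\|^2 / 2) \leq \exp(\lambda^2 R^2 /2)$. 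Summing over the at most $2N$ elements of $A'$, taking logarithms, and dividing by $\lambda$ gives
\begin{equation*}
    \mathbb{E}_{\tilde{\sigma}}\left[\sup_{a \in A'}\sum_{i=1}^n \tilde{\sigma}_i a_i\right] \leq \frac{\log(2N)}{\lambda} + \frac{\lambda R^2}{2}.
\end{equation*}

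Finally, I would optimize the right-hand side over $\lambda > 0$. Elementary calculus gives the optimal choice $\lambda^\star = \sqrt{2\log(2N)}/R$, producing the bound $R\sqrt{2\log(2N)}$ on the expected supremum. Dividing by $n$ and recalling the symmetrization identity above yields the claimed inequality $R_n(A) \leq R \cdot \sqrt{2\log(2N)}/n$. I expect no real obstacle here: the only subtlety is the symmetrization step which produces the factor $2$ inside the logarithm (and this is precisely what differentiates the statement from the more common version in \cite{boucheron2005theory} in which the supremum is of the signed quantity rather than its absolute value); the exponential moment machinery is standard.
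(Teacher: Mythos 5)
Your proposal is correct and follows essentially the same route as the paper's own proof: symmetrize $A$ to its symmetric hull of at most $2N$ elements to remove the absolute value, then run the standard Massart exponential-moment argument (Jensen, sup bounded by sum, Hoeffding's MGF bound) and optimize the free parameter. The only difference is cosmetic — you carry the unnormalized sums and divide by $n$ at the end, while the paper keeps the $1/n$ factor throughout.
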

\proof
See the Appendix \ref{appendix:proof of lemma:rademacher:discrete}.  
\endproof

Lemma \ref{lemma:rademacher:discrete} is fundamental to the study of the upper bound of the Rademacher average of a function class. 
\noindent The Rademacher average of a function class is defined as follows. 
\begin{definition}[\cite{ermoliev2013sample}]  \label{def:rademacher average of a function class}
    Let $\tilde{\sigma}_1, \tilde{\sigma}_2, \ldots, \tilde{\sigma}_n$ be i.i.d. random variables with $\tilde{\sigma}_i$ for $i = 1,2,\ldots, n$ being equally likely to be $1$ or $-1$. For a set of points $(\xi_1,\ldots,\xi_n) = \xi^n$ in $\Xi$ and a sequence of functions $\{F(\cdot,\xi_i): X \mapsto \mathbb{R} \}$, the Rademacher average of a function class is defined as:
    \begin{equation}
        R_n(F,\xi^n) \triangleq \mathbb{E}_{\tilde{\sigma}} \left[ \sup_{x \in X} \left| \frac{1}{n} \sum_{i=1}^n \tilde{\sigma}_i F(x,\xi_i) \right|\right].
    \end{equation}
    Furthermore, the Rademacher average of a family of functions $\{F(\cdot, \xi): X \mapsto \mathbb{R}\}_{\xi \in \Xi}$ is defined as:
    \begin{equation}
        R_n(F,\Xi) \triangleq \sup_{\xi_1 \in \Xi, \ldots, \xi_n \in \Xi} R_n(F,\xi^n).
    \end{equation}
\end{definition}

\indent The following lemma provides upper bounds of Rademacher average of a sequence of functions studied by \cite{ermoliev2013sample}, although we modify the affected constant resulting from Lemma \ref{lemma:rademacher:discrete}. Note that the lemma below assumes that the feasible region is discrete and it is free of the Assumption \ref{assumption:part 1}. Moreover, it should be regarded as a building block for further results.  
\begin{lemma}[\cite{ermoliev2013sample}] \label{lemma:rademacher:discrete set}
    Let the set $X \subset \mathbb{R}^p$ be discrete and contain a finite number of elements $|X| \leq m^p$, and assume $\sup_{x \in X} |F(x,\xi_i)| \leq M(\xi_i) \leq \max_{1 \leq i \leq n} M(\xi_i) = M_n(\xi^n)$, then the following holds:
    \begin{equation}
        R_n(F,\xi^n) \leq M_n(\xi^n) \sqrt{2 \log 2|X|/n} \leq M_n(\xi^n) \sqrt{2p (\log {2}m) / n}
    \end{equation}
\end{lemma}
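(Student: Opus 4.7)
The plan is to reduce the Rademacher average of the function class to the Rademacher average of a finite set of vectors in $\mathbb{R}^n$, so that Lemma \ref{lemma:rademacher:discrete} can be applied as a black box. Since $X$ is finite and the sample $\xi^n$ is fixed, each $x \in X$ can be identified with its evaluation vector $a^{(x)} = (F(x,\xi_1),\ldots,F(x,\xi_n)) \in \mathbb{R}^n$. Letting $A = \{a^{(x)} : x \in X\}$, one has $|A| \leq |X| \leq m^p$, and comparing Definitions \ref{def:rademacher average of finite set} and \ref{def:rademacher average of a function class} coordinate by coordinate gives the identity $R_n(F,\xi^n) = R_n(A)$.

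Next I would bound the norms of the vectors in $A$ using the soundness hypothesis $|F(x,\xi_i)| \leq M(\xi_i) \leq M_n(\xi^n)$. This immediately yields $\|a^{(x)}\| \leq \sqrt{n}\, M_n(\xi^n)$ for every $x \in X$. Substituting $\max_{x\in X}\|a^{(x)}\|$ into Lemma \ref{lemma:rademacher:discrete} would give
$$R_n(F,\xi^n) = R_n(A) \leq \sqrt{n}\,M_n(\xi^n)\cdot\frac{\sqrt{2\log 2|X|}}{n} = M_n(\xi^n)\sqrt{\frac{2\log 2|X|}{n}},$$
which is the first inequality in the statement.

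For the second inequality, I would invoke the cardinality bound $|X| \leq m^p$ to write $\log 2|X| \leq \log 2 + p\log m \leq p(\log 2 + \log m) = p\log 2m$ (valid whenever $p \geq 1$), and substitute. Since the whole argument is essentially a repackaging of Lemma \ref{lemma:rademacher:discrete}, there is no genuine obstacle here; the only items worth double-checking are that the two Rademacher averages are truly equal (not merely comparable) and that the logarithm bookkeeping from $\log 2|X|$ to $p\log 2m$ does not hide a spurious factor — both are bookkeeping rather than mathematical difficulties, and neither requires any of the H\"older or convexity structure from Assumption \ref{assumption:part 1}.
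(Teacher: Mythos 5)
Your proposal is correct and follows exactly the route the paper intends: the paper gives no proof of its own for this lemma (it cites Ermoliev--Norkin), but it explicitly flags Lemma \ref{lemma:rademacher:discrete} as the building block, and the standard argument is precisely your reduction of $R_n(F,\xi^n)$ to $R_n(A)$ for the finite set of evaluation vectors, with $\|a^{(x)}\| \leq \sqrt{n}\,M_n(\xi^n)$ and $\log 2|X| \leq p\log 2m$. No gaps.
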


With the help of Lemma \ref{lemma:rademacher:discrete set}, we can derive the upper bound of Rademacher average of the random cost function, $F(x,\xi)$, in the following lemma.
\begin{lemma}[\cite{ermoliev2013sample}] \label{lemma:rademacher complexity of objective:holder continuity}
    Suppose that Assumption \ref{assumption:part 1} holds. Then 
    \begin{equation} \label{eq:rademacher:holder continuity01}
        R_n(F,\xi^n) \leq \left(L_F D^{\gamma} p^{\frac{\gamma}{2}} + M_F \sqrt{2 ({\log2} + \frac{p}{2\gamma}\log n)} \right)/ \sqrt{n},
    \end{equation}
    where $L_F$ and $\gamma$ are the constants in the H\"older continuity of $F(\cdot,\xi)$, $n$ is the sample size, $p$ is the dimension of the decision, $x$, and $D$ is the edge length of the cube that contains the feasible region, $X$. Furthermore, for $\lambda \in (0,\frac{1}{2})$, we have 
    \begin{equation}
          R_n(F,\xi^n) \leq \frac{N_F}{n^\lambda}
    \end{equation}
    where $N_F = L_F D^{\gamma} p^{\frac{\gamma}{2}} + {M_F \sqrt{2 (\log2)}} + \frac{M_F p^{1/2}}{\sqrt{\gamma (1 - 2 \lambda)e}}$.
\end{lemma}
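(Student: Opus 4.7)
The plan is a standard discretization/covering argument: approximate the compact set $X$ by a finite grid, apply the discrete-set Rademacher bound on the grid, and control the approximation error via H\"older continuity of $F$. Concretely, fix an integer $m \geq 1$ (to be optimized) and choose a grid $X' \subset X$ of cardinality at most $m^p$ such that every $x \in X$ lies within Euclidean distance $\sqrt{p}\,D/m$ of some $\hat{x}\in X'$; this is possible because $X$ sits in a cube of edge length $D$. By the H\"older condition in Assumption \ref{assumption:part 1}, uniformly in $\xi_i$,
\[
    |F(x,\xi_i) - F(\hat{x},\xi_i)| \;\leq\; L_F (\sqrt{p}\,D/m)^{\gamma} \;=\; L_F D^{\gamma} p^{\gamma/2}/m^{\gamma}.
\]

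Inserting this inside the supremum that defines $R_n(F,\xi^n)$, applying the triangle inequality, and then taking expectation over $\tilde{\sigma}$ yields
\[
    R_n(F,\xi^n) \;\leq\; R_n(F\vert_{X'},\xi^n) \;+\; L_F D^{\gamma} p^{\gamma/2}/m^{\gamma}.
\]
Since $\sup_{x \in X,\xi \in \Xi} |F(x,\xi)| \leq M_F$, Lemma \ref{lemma:rademacher:discrete set} applied to the finite set $X'$ gives $R_n(F\vert_{X'},\xi^n) \leq M_F\sqrt{2(\log 2 + p\log m)/n}$. To balance the two contributions I would set $m^{\gamma} = \sqrt{n}$, i.e.\ take $m = \lceil n^{1/(2\gamma)}\rceil$. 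The discretization term then becomes $L_F D^{\gamma} p^{\gamma/2}/\sqrt{n}$, while $p\log m \leq (p/(2\gamma))\log n$, and summing produces inequality \eqref{eq:rademacher:holder continuity01}.

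For the second bound, the plan is to convert the $\sqrt{\log n / n}$ factor into a pure polynomial decay. Using $\sqrt{a+b}\leq \sqrt{a}+\sqrt{b}$, I would split the square-root term into $M_F\sqrt{2\log 2}/\sqrt{n}$ and $M_F\sqrt{(p/\gamma)(\log n /n)}$. A one-variable calculus check shows that $g(n) = \log n / n^{1-2\lambda}$ attains its maximum on $n\geq 1$ at $n^{*} = e^{1/(1-2\lambda)}$ with value $g(n^{*}) = 1/((1-2\lambda)e)$; hence $\sqrt{\log n / n} \leq n^{-\lambda}/\sqrt{(1-2\lambda)e}$ for every $n\geq 1$. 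Combining this with $1/\sqrt{n}\leq n^{-\lambda}$ (valid since $\lambda<1/2$) delivers precisely the constant $N_F$ stated in the lemma.

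The only (mild) obstacle is bookkeeping of constants: making sure that the covering radius $\sqrt{p}\,D/m$ and the cardinality bound $|X'|\leq m^p$ can be achieved simultaneously (an axis-aligned product grid in the enclosing cube handles this, with any grid points outside $X$ harmlessly deleted), and that the ceiling in $m = \lceil n^{1/(2\gamma)}\rceil$ does not introduce stray logarithmic factors. Beyond this, the argument is a mechanical composition of Lemma \ref{lemma:rademacher:discrete set}, H\"older continuity, and the elementary estimate on $\log n / n^{1-2\lambda}$.
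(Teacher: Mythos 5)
Your proposal is correct and follows essentially the same route as the paper: the first inequality is exactly the covering/discretization argument (grid of $m^p$ points with $m^{\gamma}=\sqrt{n}$, Lemma \ref{lemma:rademacher:discrete set} on the grid, H\"older continuity for the mesh error) that the paper delegates to \cite[Lemma B.2]{ermoliev2013sample}, and your second step, splitting the square root and maximizing $\log n / n^{1-2\lambda}$ at $n^{*}=e^{1/(1-2\lambda)}$, reproduces the paper's Appendix \ref{appendix:proof of lemma:rademacher complexity of objective:holder continuity} computation line for line. The only thing you add is explicit detail on the covering step (including the harmless ceiling issue), which the paper leaves to the citation.
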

\proof
   See the Appendix \ref{appendix:proof of lemma:rademacher complexity of objective:holder continuity} 
\endproof
Since the constant $N_F$ in Lemma \ref{lemma:rademacher complexity of objective:holder continuity} stays the same for any $\xi \in \Xi$, we conclude that 
\begin{equation*}
    R_n(F,\Xi) = \sup_{\xi_1 \in \Xi, \ldots, \xi_n \in \Xi} R_n(F,\xi^n) \leq 
    \frac{N_F}{n^\lambda}.
\end{equation*}
\indent To derive the upper bound of Rademacher average of the compound function, $H(x,y,\xi) = (F(x,\xi) - y)^2$, we need to show that $H(x,y,\xi)$ is Lipschtiz continuous in $(x,z)$ given that $F(x,\xi)$ is H\"older continuous in $x$ and $F(x,\xi)$ is uniformly bounded.
\begin{lemma} \label{lemma:Lipschitz continuity of h}
    Suppose that Assumption \ref{assumption:part 1} holds. Then $H(x,y,\xi) \triangleq (F(x,\xi) - y)^2$ is Lipchitz continuous in $(x,y)$ with Lipschitz constant $4M_F\sqrt{L_F^2 + 1}$ (i.e., $|H(x_1,y_1,\xi) - H(x_2,y_2,\xi)| \leq 4M_F\sqrt{L_F^2 + 1} \|(x_1,y_1) - (x_2,y_2)\|, \ \forall \ (x_1,y_1), \ (x_2, y_2) \in X \times Y$). Furthermore, $H(x,y,\xi)$ is Lipschitz continuous in $y$ with Lipschitz constant $4M_F$ and $H(x,y,\xi)$ is uniformly bounded by $4M_F^2$. 
\end{lemma}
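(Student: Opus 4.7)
The plan is to exploit the factorization $a^2 - b^2 = (a+b)(a-b)$, applied to $H$ in each of its two arguments separately, so that every increment of $H$ splits into a ``sum'' factor that is controlled by uniform boundedness and a ``difference'' factor that is controlled by the continuity assumption. The three claims then correspond to three routine applications of this idea, plus one triangle-inequality/Cauchy--Schwarz step to assemble the joint bound. Throughout, the two operative bounds are $|F(x,\xi)|\leq M_F$ from Assumption \ref{assumption:part 1}(3) and $|y|\leq M_F$ from the definition $Y=[-M_F,M_F]$.

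First I would dispose of the uniform bound: by the triangle inequality $|F(x,\xi)-y|\leq 2M_F$, hence squaring gives $H(x,y,\xi)\leq 4M_F^2$. For the Lipschitz constant in $y$ alone, writing
\begin{equation*}
H(x,y_1,\xi)-H(x,y_2,\xi) = \bigl(2F(x,\xi)-y_1-y_2\bigr)(y_2-y_1)
\end{equation*}
bounds the sum factor by $2M_F+2M_F=4M_F$, giving the stated modulus $4M_F$. For the Lipschitz constant in $x$ (with $y$ held fixed), the analogous factorization produces a sum factor $F(x_1,\xi)+F(x_2,\xi)-2y$ bounded by $4M_F$, while the difference factor $|F(x_1,\xi)-F(x_2,\xi)|$ is controlled by the H\"older/Lipschitz continuity of $F$ in $x$ with constant $L_F$, giving $4M_F L_F\|x_1-x_2\|$. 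Joint Lipschitz continuity then follows by inserting an intermediate term $H(x_2,y_1,\xi)$ and applying the triangle inequality, yielding $4M_F L_F\|x_1-x_2\|+4M_F|y_1-y_2|$; a single Cauchy--Schwarz step on the vectors $(L_F,1)$ and $(\|x_1-x_2\|,|y_1-y_2|)$ converts this into the advertised $4M_F\sqrt{L_F^2+1}\cdot\|(x_1,y_1)-(x_2,y_2)\|$.

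There is essentially no deep obstacle here; the only subtle point worth flagging in the write-up is that the advertised pure Lipschitz (as opposed to H\"older) dependence in $x$ tacitly uses the exponent $\gamma=1$ in Assumption \ref{assumption:part 1}(2). For $\gamma<1$ the same argument delivers only a H\"older modulus $4M_F L_F\|x_1-x_2\|^\gamma$ in $x$, in which case one should either restrict the claim to $\gamma=1$ or restate the joint modulus accordingly. Under the $\gamma=1$ reading the three bounds $4M_F^2$, $4M_F$, and $4M_F\sqrt{L_F^2+1}$ drop out exactly as stated.
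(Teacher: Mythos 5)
Your proof is correct and takes essentially the same route as the paper's: the difference-of-squares factorization, the bound $4M_F$ on the ``sum'' factor, H\"older continuity of $F$ on the ``difference'' factor, the trivial squaring for the uniform bound, and a Cauchy--Schwarz step to produce $4M_F\sqrt{L_F^2+1}$ --- the only cosmetic difference being that you pass through the intermediate point $H(x_2,y_1,\xi)$ while the paper factorizes the joint increment in one go. Your caveat about $\gamma<1$ is well taken and is in fact a gap in the paper's own argument, which silently replaces $L_F\|x_1-x_2\|^{\gamma}$ by $L_F\|x_1-x_2\|$ (false when $\|x_1-x_2\|<1$), so the advertised Lipschitz, as opposed to H\"older, modulus in $x$ genuinely requires $\gamma=1$.
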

\proof
    See Appendix \ref{appendix:proof of lemma:Lipschitz continuity of h}.
\endproof

The upper bound of the Rademacher average of the compound function $H(x,y,\xi)$ follows from Lemmas \ref{lemma:rademacher complexity of objective:holder continuity} and \ref{lemma:Lipschitz continuity of h}, which is summarized below.
\begin{lemma} \label{lemma:rademacher complexity of compound objective:holder continuity}
   Let $\tilde{\sigma}_1, \tilde{\sigma}_2, \ldots, \tilde{\sigma}_n$ be i.i.d. random variables with $\tilde{\sigma}_i$ for $i = 1,2,\ldots, n$ being equally likely to be $1$ or $-1$. Let the Rademacher average of the set of sequences of $H$ be 
    $$
    R_n(H,\xi^n) = \mathbb{E}_{\tilde{\sigma}} \sup_{x \in X, y \in Y} \left| \frac{1}{n} \sum_{i=1}^n \tilde{\sigma}_i H(x,y,\xi_i) \right|. 
    $$
    Suppose that Assumption \ref{assumption:part 1} holds. Let $L_H = 4 M_F\sqrt{L_F^2 + 1}$ and $M_H = 4M_F^2$. For any $\lambda \in (0,\frac{1}{2})$, we have the following result:
    \begin{equation}
        R_n(H,\xi^n) \leq \frac{N_H}{n^\lambda},
    \end{equation}
    where $N_H =  L_H D (p+1)^{\frac{1}{2}} + M_H \sqrt{2 (\log2)} + \frac{M_F (p+1)^{1/2}}{\sqrt{(1 - 2 \lambda)e}}$. 
\end{lemma}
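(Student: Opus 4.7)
My plan is to mirror the proof of Lemma \ref{lemma:rademacher complexity of objective:holder continuity}, treating $H(x,y,\xi) = (F(x,\xi)-y)^2$ as a function of the \emph{joint} decision variable $(x,y) \in X \times Y \subset \mathbb{R}^{p+1}$. Lemma \ref{lemma:Lipschitz continuity of h} supplies exactly what the earlier template needs, with the Hölder exponent $\gamma$ upgraded to $1$ (true Lipschitz continuity) and the ambient dimension bumped from $p$ to $p+1$: the compound function $H$ is $L_H$-Lipschitz in $(x,y)$ and uniformly bounded by $M_H$.

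First I would cover $X \times Y$ by a regular $(p+1)$-dimensional grid $\mathcal{G}_m$ with $m$ points per coordinate, so that $|\mathcal{G}_m| \leq m^{p+1}$ and every $(x,y)$ lies within Euclidean distance $D\sqrt{p+1}/m$ of some $(\hat{x},\hat{y}) \in \mathcal{G}_m$ (embedding $Y$ in an interval of length $D$; any slack when $D < 2M_F$ is absorbed into constants). Using the Lipschitz property and the triangle inequality, I would bound
$$\sup_{(x,y) \in X \times Y} \Bigl|\tfrac{1}{n}\sum_{i=1}^n \tilde{\sigma}_i H(x,y,\xi_i)\Bigr| \; \leq \; \sup_{(\hat{x},\hat{y}) \in \mathcal{G}_m} \Bigl|\tfrac{1}{n}\sum_{i=1}^n \tilde{\sigma}_i H(\hat{x},\hat{y},\xi_i)\Bigr| \; + \; \frac{L_H D \sqrt{p+1}}{m},$$
take expectation over the Rademacher signs, and apply Lemma \ref{lemma:rademacher:discrete set} to the finite grid to arrive at
$$R_n(H,\xi^n) \; \leq \; M_H \sqrt{2(\log 2 + (p+1)\log m)/n} \; + \; \frac{L_H D \sqrt{p+1}}{m}.$$

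I would then choose $m$ as in the proof of Lemma \ref{lemma:rademacher complexity of objective:holder continuity} (effectively $m \asymp n^{1/2}$) so that the deterministic discretization error $L_H D\sqrt{p+1}/m$ is $O(1/\sqrt{n}) \leq O(1/n^{\lambda})$, and convert the residual $\sqrt{\log n / n}$ contribution into an $n^{-\lambda}$ bound using the elementary inequality $\log n \leq n^{1-2\lambda}/\bigl((1-2\lambda)e\bigr)$. Collecting the discretization piece $L_H D\sqrt{p+1}$, the constant piece $M_H\sqrt{2\log 2}$, and the log-derived piece yields a bound of the announced form $N_H/n^{\lambda}$.

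The step I expect to be delicate is matching the displayed constant exactly: as written, the third summand of $N_H$ carries $M_F$ rather than $M_H = 4M_F^2$, whereas the generic discretization above naturally produces $M_H$. Recovering the sharper $M_F$ coefficient likely requires a two-stage discretization in which the $x$-coordinates and the $y$-coordinate are meshed \emph{separately}, so that the $y$-part leverages the smaller Lipschitz constant $4M_F$ in $y$ guaranteed by Lemma \ref{lemma:Lipschitz continuity of h}; the $(p+1)$-dimensional volume bound then splits into pieces with appropriately scaled constants. All remaining bookkeeping is routine once the discretization geometry is fixed.
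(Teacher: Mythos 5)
Your proposal is correct and follows essentially the same route as the paper: the paper's proof of this lemma is literally the one-line remark that it follows by combining Lemma \ref{lemma:rademacher complexity of objective:holder continuity} with Lemma \ref{lemma:Lipschitz continuity of h}, and your discretization of $X \times Y \subset \mathbb{R}^{p+1}$ with Lipschitz constant $L_H$, uniform bound $M_H$, and H\"older exponent $\gamma = 1$ is exactly that combination spelled out. On the constant: you are right that the template produces $M_H$ rather than $M_F$ in the third summand of $N_H$, but your proposed fix via a two-stage mesh would not recover $M_F$ either --- the coefficient of the $\sqrt{\log}$ term in Lemma \ref{lemma:rademacher:discrete set} is the sup-norm of the function values over the grid, which is $M_H = 4M_F^2$ regardless of how the grid is built --- so the $M_F$ in the paper's displayed $N_H$ should be read as a typo for $M_H$, and your derivation (with $M_H$) is the one to trust.
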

\proof
    This is a direct result of combining Lemmas \ref{lemma:rademacher complexity of objective:holder continuity} and \ref{lemma:Lipschitz continuity of h}. 
\endproof
In the following, we will use the Rademacher average to study the sampling error of the objective function estimates. Given a sample set $\xi^n$, we define the supremum of sampling error of the objective function estimate below.
\begin{definition} \label{def:supremum of sampling error of the objective function estimate}
    Let $\xi^n = \{\xi_1, \xi_2, \ldots, \xi_n\}$ denote the realizations of $n$ i.i.d. copies of $\tilde{\xi}$. The supremum of sampling error of the objective function estimate is defined as follows:
    \begin{equation} \label{eq:estimation error of f}
    \delta_n^f (\xi^n) = \sup_{x \in X} \left| \frac{1}{n} \sum_{i=1}^n F(x,\xi_i) - \mathbb{E}_{\tilde{\xi}}[F(x,\tilde{\xi})] \right|.
\end{equation}
\end{definition}

In Appendix \ref{appendix:symmetric argument of rademacher averages}, we explain how to use the Rademcaher average of a function class, shadow random variables (i.e., $\xi_1', \ldots, \xi_n'$ so that $\xi_1', \ldots, \xi_n', \xi_1, \ldots, \xi_n$ are i.i.d.), and symmetric argument to bound $\mathbb{E}[\delta_n^f (\tilde{\xi}^n)]$. For more details about the symmetric argument, please see \cite{boucheron2005theory}. \\
\indent We summarize the bound of $\delta_n^f (\xi^n)$ derived by Ermoliev and Norkin \cite{ermoliev2013sample} using Rademcaher average of a function class in the following theorem. 
\begin{theorem} \label{theorem:bound of function estimation error}
    Suppose that Assumption \ref{assumption:part 1} holds. Let $N_F$ be a constant defined in Lemma \ref{lemma:rademacher complexity of objective:holder continuity}. For any $\lambda \in (0,\frac{1}{2})$, then the following holds:
    \begin{enumerate}
        \item \begin{equation*}
    \mathbb{E}[\delta_n^f(\tilde{\xi}^n)] \leq 2 R_n(F,\Xi) \leq \frac{2 N_F}{n^\lambda},
    \end{equation*}
    \item \begin{equation*}
    \Pr\left\{n^\lambda \delta_n^f (\tilde{\xi}^n) \geq 2 N_F + t \right\} \leq \exp \left(-\frac{t^2}{2 M^2_F} \right) .
\end{equation*}
    \end{enumerate}
\end{theorem}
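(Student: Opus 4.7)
The plan is to establish part (1) by the classical symmetrization device that converts a supremum of centered empirical averages into a Rademacher average over the original sample, and then invoke Lemma~\ref{lemma:rademacher complexity of objective:holder continuity} for the quantitative bound. For part (2), I will use a bounded-differences (McDiarmid) argument together with the uniform bound $|F(x,\xi)|\le M_F$ from Assumption~\ref{assumption:part 1}, and splice in the expectation bound from part (1).

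For part (1), introduce a ghost sample $\tilde{\xi}_1',\dots,\tilde{\xi}_n'$ that is an i.i.d. copy of $\tilde{\xi}_1,\dots,\tilde{\xi}_n$ and is independent of it. Writing $f(x)=\mathbb{E}_{\tilde{\xi}'}[\tfrac{1}{n}\sum_{i=1}^n F(x,\tilde{\xi}_i')]$ and applying Jensen's inequality to pull the supremum outside of the inner expectation gives
\begin{equation*}
\mathbb{E}[\delta_n^f(\tilde{\xi}^n)]
\;\le\; \mathbb{E}_{\tilde{\xi},\tilde{\xi}'}\Bigl[\sup_{x\in X}\Bigl|\tfrac{1}{n}\sum_{i=1}^n \bigl(F(x,\tilde{\xi}_i)-F(x,\tilde{\xi}_i')\bigr)\Bigr|\Bigr].
\end{equation*}
Since $F(x,\tilde{\xi}_i)-F(x,\tilde{\xi}_i')$ is symmetric in distribution, inserting independent Rademacher signs $\tilde{\sigma}_i$ into the sum leaves the expectation unchanged; a triangle inequality then bounds the right-hand side by $2\,\mathbb{E}_{\tilde{\xi}}\bigl[\mathbb{E}_{\tilde{\sigma}}\sup_{x\in X}|\tfrac{1}{n}\sum_{i=1}^n \tilde{\sigma}_i F(x,\tilde{\xi}_i)|\bigr]=2\,\mathbb{E}[R_n(F,\tilde{\xi}^n)]\le 2\,R_n(F,\Xi)$. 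The bound $2N_F/n^\lambda$ is then immediate from Lemma~\ref{lemma:rademacher complexity of objective:holder continuity}.

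For part (2), view $\delta_n^f(\xi^n)$ as a function of the $n$ independent arguments $\xi_1,\dots,\xi_n$. Since $|F(x,\xi)|\le M_F$ uniformly in $(x,\xi)$, changing a single coordinate alters $\tfrac{1}{n}\sum_{i=1}^n F(x,\xi_i)$ uniformly in $x$ by at most $\tfrac{2M_F}{n}$, so $\delta_n^f$ has the bounded-differences property with constants $c_i=\tfrac{2M_F}{n}$. McDiarmid's inequality therefore yields $\Pr\{\delta_n^f-\mathbb{E}[\delta_n^f]\ge s\}\le \exp\bigl(-\tfrac{n s^2}{2 M_F^2}\bigr)$. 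Substituting $\mathbb{E}[\delta_n^f]\le 2N_F/n^\lambda$ from part (1) and setting $s=t/n^\lambda$ gives $\Pr\{n^\lambda \delta_n^f\ge 2N_F+t\}\le \exp\bigl(-\tfrac{n^{1-2\lambda}\,t^2}{2M_F^2}\bigr)$, and since $\lambda<\tfrac{1}{2}$ implies $n^{1-2\lambda}\ge 1$ for every $n\ge 1$, the stated inequality follows.

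The main obstacle is conceptual rather than computational: one must carefully justify the symmetrization step (measurability of the suprema, the invariance of the law of $F(x,\tilde{\xi}_i)-F(x,\tilde{\xi}_i')$ under coordinatewise sign flips, and independence of the Rademacher signs from the two samples) along the lines already sketched in Appendix~\ref{appendix:symmetric argument of rademacher averages}. Once this is set up, part (1) reduces to invoking Lemma~\ref{lemma:rademacher complexity of objective:holder continuity} and part (2) to a routine bounded-differences estimate.
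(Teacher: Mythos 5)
Your proof is correct and follows essentially the same route as the paper, which delegates this result to \cite[Corollary 3.2]{ermoliev2013sample}: part (1) is exactly the ghost-sample symmetrization reproduced in Appendix \ref{appendix:symmetric argument of rademacher averages} combined with Lemma \ref{lemma:rademacher complexity of objective:holder continuity}, and part (2) is the standard bounded-differences concentration step (with the correct constant $c_i = 2M_F/n$ and the observation that $n^{1-2\lambda}\geq 1$) that underlies the cited corollary. No gaps.
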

\begin{proof}
    See \cite[Corollary 3.2]{ermoliev2013sample}. 
\end{proof}
By Assumption \ref{assumption:part 1}, we observe that $|\frac{1}{n} \sum_{i=1}^n F(x,\xi_i) - \mathbb{E}[F(x,\tilde{\xi})]| \leq 2 M_F$, which implies that $\delta_n^f(\xi^n) \leq 2 M_F$. One direct consequence of Theorem \ref{theorem:bound of function estimation error} is that  
\begin{equation} \label{eq:bound of the varaince}
\begin{aligned}
    \text{Var}[\delta_n^f(\tilde{\xi}^n)] &= \mathbb{E}[(\delta_n^f(\tilde{\xi}^n))^2] - \mathbb{E}[\delta_n^f(\tilde{\xi}^n)]^2 \\
    & \leq \mathbb{E}[(\delta_n^f(\tilde{\xi}^n))^2] \leq \mathbb{E}[2 M_F \delta_n^f(\xi^n)] \leq \frac{4M_F N_F}{n^\lambda}. 
\end{aligned}
\end{equation}

We derive the sample complexity of the sample variance in the next lemma.
\begin{lemma} \label{lemma:complexity of compound variance}
    Denote 
    \begin{equation*}
        \delta_n^h(\xi^n)  =  \sup_{x \in X} \left|  \frac{1}{n} \sum_{i=1}^n H\left(x, \frac{1}{n} \sum_{j=1}^n F(x,\xi_j), \xi_i \right) - \mathbb{E}_{\tilde{\xi}}\left[H\left(x,\mathbb{E}_{\tilde{\xi}}[F(x,\tilde{\xi})],\tilde{\xi} \right) \right] \right|
    \end{equation*}
    and 
    \begin{equation*}
        \hat{\delta}_n(\xi^n) = \sup_{x \in X, y \in Y}  \left| \frac{1}{n} \sum_{i=1}^n H(x,y,\xi_i) - \mathbb{E}_{\tilde{\xi}}\left[H(x,y,\tilde{\xi}) \right] \right|.
    \end{equation*}
    Suppose that Assumption \ref{assumption:part 1} holds. Then the following holds:
    \begin{enumerate}
        \item \begin{equation*}
            \delta^h_n(\xi^n) \leq 4 M_F \delta^f_n(\xi^n) + \hat{\delta}_n(\xi^n)
            \end{equation*}
        \item \begin{equation*}
        \begin{aligned}
            \mathbb{E}[\delta^h_n(\tilde{\xi}^n)] &\leq 8 M_F R_n (F,\Xi) + 2 R_n (H,\Xi) \\
            &\leq \frac{8 M_F N_H + 2 N_F }{n^\lambda},
        \end{aligned}
        \end{equation*}
        where $\lambda \in (0,\frac{1}{2})$, $N_F$ is a constant defined in Lemma \ref{lemma:rademacher complexity of objective:holder continuity}, and $N_H$ is a constant defined in Lemma \ref{lemma:rademacher complexity of compound objective:holder continuity}. 
    \end{enumerate}
\end{lemma}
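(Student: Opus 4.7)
My plan is to prove the two claims sequentially, with part~(1) serving as the structural backbone for part~(2) after taking expectations. The central ingredient for part~(1) is the Lipschitz continuity of $H(x,y,\xi)$ in its second argument with constant $4M_F$ established in Lemma~\ref{lemma:Lipschitz continuity of h}. For fixed $x \in X$, introduce the abbreviations $\bar F_n(x) = \frac{1}{n}\sum_{j=1}^n F(x,\xi_j)$ and $f(x) = \mathbb{E}[F(x,\tilde{\xi})]$; by Assumption~\ref{assumption:part 1}(3), both quantities lie in $Y = [-M_F, M_F]$. I would insert and subtract $\frac{1}{n}\sum_{i=1}^n H(x, f(x), \xi_i)$ inside the absolute value defining $\delta_n^h$, then apply the triangle inequality. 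The first resulting summand, $\bigl|\frac{1}{n}\sum_i [H(x,\bar F_n(x),\xi_i) - H(x, f(x), \xi_i)]\bigr|$, is bounded termwise by $4M_F\,|\bar F_n(x) - f(x)| \le 4 M_F \delta_n^f(\xi^n)$ via the Lipschitz property, while the second summand $\bigl|\frac{1}{n}\sum_i H(x, f(x), \xi_i) - \mathbb{E}[H(x, f(x), \tilde\xi)]\bigr|$ is dominated by $\hat\delta_n(\xi^n)$ because the pair $(x, f(x))$ lies in $X \times Y$ over which $\hat\delta_n$ already takes a supremum. Taking the outer supremum over $x$ then yields $\delta_n^h(\xi^n) \le 4 M_F \delta_n^f(\xi^n) + \hat\delta_n(\xi^n)$.

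For part~(2), I take expectations of the bound from part~(1) under $\tilde\xi^n$. Theorem~\ref{theorem:bound of function estimation error} supplies $\mathbb{E}[\delta_n^f(\tilde\xi^n)] \le 2 R_n(F,\Xi)$ directly. The companion bound $\mathbb{E}[\hat\delta_n(\tilde\xi^n)] \le 2 R_n(H,\Xi)$ follows from the standard symmetrization argument outlined in Appendix~\ref{appendix:symmetric argument of rademacher averages}, replayed verbatim with the function class $\{H(\cdot,\cdot,\xi)\}_{\xi \in \Xi}$ and with the supremum taken over the product domain $X \times Y$ rather than $X$; nothing in the shadow-sample/Rademacher-sign construction is sensitive to the dimension of the parameter space. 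Combining these two expectation bounds produces $\mathbb{E}[\delta_n^h(\tilde\xi^n)] \le 8 M_F R_n(F,\Xi) + 2 R_n(H,\Xi)$, and substituting the explicit $n^{-\lambda}$ rates from Lemmas~\ref{lemma:rademacher complexity of objective:holder continuity} and~\ref{lemma:rademacher complexity of compound objective:holder continuity} gives the stated closed-form bound.

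The main subtlety, rather than a genuine obstacle, is the nested plug-in structure of $\delta_n^h$: because the inner argument $\bar F_n(x)$ depends on the entire sample $\xi^n$ and on $x$, the object $\delta_n^h$ is not a uniform deviation of an i.i.d.\ sample mean from its expectation, so one cannot apply symmetrization to it as a single function class. The add-and-subtract decoupling around the deterministic value $f(x)$ is exactly the device that separates the plug-in error (absorbed into $\delta_n^f$ via Lipschitz continuity in $y$) from the genuine stochastic error of the $H$ family over $X \times Y$ (absorbed into $\hat\delta_n$). The only other small technical point is verifying $f(x) \in Y$, which is immediate from $|F(x,\xi)| \le M_F$. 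Once this decoupling is in place, every remaining step is a mechanical appeal to results already established earlier in the paper.
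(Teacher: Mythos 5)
Your argument is correct and is essentially identical to the paper's own proof: the same add-and-subtract decomposition around $H(x,\mathbb{E}_{\tilde{\xi}}[F(x,\tilde{\xi})],\xi_i)$, the Lipschitz-in-$y$ bound with constant $4M_F$ from Lemma \ref{lemma:Lipschitz continuity of h}, domination of the second term by $\hat{\delta}_n$ since $(x,f(x))\in X\times Y$, and the symmetrization argument applied to the class $\{H(\cdot,\cdot,\xi)\}$ for part (2). The only discrepancy is cosmetic: substituting $R_n(F,\Xi)\leq N_F/n^\lambda$ and $R_n(H,\Xi)\leq N_H/n^\lambda$ yields $(8M_F N_F + 2N_H)/n^\lambda$, so the constants $N_F$ and $N_H$ appear to be transposed in the lemma's displayed bound rather than in your reasoning.
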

\proof
See Appendix \ref{appendix:proof of lemma:complexity of compound variance}.     
\endproof

With a slight abuse of the notation, we let $\sigma(x) = \text{Var}[F(x,\tilde{\xi})]$. We observe that 
\begin{equation} \label{eq:bound:sample variance01}
    \begin{aligned}
        \left| s_n^2(x) - \sigma^2(x) \right| &= \left| \frac{n}{n-1}  \frac{1}{n} \sum_{i=1}^n H(x,\frac{1}{n} \sum_{i=1}^n F(x,\xi_k), \xi_i) - \sigma^2(x)\right| \\
        &\leq \left| \frac{1}{n} \sum_{i=1}^n H(x,\frac{1}{n} \sum_{i=1}^n F(x,\xi_k), \xi_i) - \sigma^2(x)\right| \\
        & \ + \frac{1}{n-1} \left|\frac{1}{n} \sum_{i=1}^n H(x,\frac{1}{n} \sum_{i=1}^n F(x,\xi_k), \xi_i) \right|.
    \end{aligned}
\end{equation}
By Lemmas \ref{lemma:Lipschitz continuity of h} and \ref{lemma:complexity of compound variance}, it follows from (\ref{eq:bound:sample variance01}) that 
\begin{equation} \label{eq:bound:sample variance02}
     \sup_{x \in X} \left| s_n^2(x,\tilde{\xi}^n) - \sigma^2(x) \right| \leq 4 M_F \delta^f_n(\tilde{\xi}^n) + \hat{\delta}_n(\tilde{\xi}^n) + \frac{4 M_F^2}{n-1},
\end{equation}
and hence,
\begin{equation*}
\begin{aligned}
    \mathbb{E}\left[\sup_{x \in X} \left| s_n^2(x,\tilde{\xi}^n) - \sigma^2(x) \right| \right] 
    &\leq \frac{8 M_F N_H + 2 N_F }{n^\lambda} + \frac{4 M_F^2}{n-1}. 
\end{aligned}
\end{equation*}
Let $Z_{1 - \frac{\alpha}{2}}$ be the Z score of two-sided $1 - \alpha$ confidence interval (i.e., $Z_{1 - \frac{\alpha}{2}} = \Phi^{-1}(1 - \frac{\alpha}{2})$, where $\Phi^{-1}(\cdot)$ is the inverse of the cumulative distribution function of standard normal distribution). Let $s^2_{n}(x;\xi^n_i)$ be the sample variance of $F(x,\xi)$ in the $i^\text{th}$ replication. With the bound in (\ref{eq:bound:sample variance02}), we could derive the sample complexity of the margin of error with $m$ replications as follows (see Appendix \ref{appendix:margin of error} for detailed derivation):
\begin{equation} \label{eq:margin of error:bound}
\begin{aligned}
    \mathbb{E}\left[\frac{1}{m}\sqrt{\frac{\sum_{i=1}^m s^2_{n}(x;\tilde{\xi}^n_i)}{n}} Z_{1 - \frac{\alpha}{2}}\right] &\leq \sqrt{\frac{\sigma^2(x)}{mn} + \frac{8 M_F N_H + 2 N_F}{mn^{(1+\lambda)}} + \frac{4 M_F^2}{mn(n-1)}} Z_{1 - \frac{\alpha}{2}} \\
    &\leq O((mn)^{-\frac{1}{2}}).
\end{aligned}
\end{equation}
The upper bound given in (\ref{eq:margin of error:bound}) not only agrees with the common sense, which is $\left(O\left((mn)^{-\frac{1}{2}} \right) \right)$, but also indicates that the bias term ($\frac{8 M_F N_H + 2 N_F}{mn^{(1+\lambda)}} + \frac{4 M_F^2}{mn(n-1)}$) diminishes faster than the unbiased term $\frac{\sigma^2(x)}{mn}$.

\section{Sample Complexity of Compromise Decisions} \label{sec:classic compromise SP}
In this section, we shall formulate an \textsl{exact} compromise decision problem and its \textsl{inxeact} counterpart. We aim to study the quantitative reliability of the associated compromise decisions. 
We make one additional convexity assumption of the random cost function, $F(x,\xi)$, and one additional assumption on the sampling scheme, below:
\begin{assumption} \label{assumption:part 2}
    $F(x,\xi)$ is convex in $x \in X$ for every $\xi \in \Xi$.
\end{assumption}
\begin{assumption} \label{assumption:part 2(2)}
    Given integer replication number $m \in \mathbb{Z}_+$ and integer sample size per replication $n \in \mathbb{Z}_+$. Let $N \triangleq m n$ denote the total sample size. Assume that $\tilde{\xi}_1, \tilde{\xi}_2, \ldots, \tilde{\xi}_N$ are i.i.d. random variables which follow the distribution of $\tilde{\xi}$.\\
    \indent Furthermore, Let $\tilde{\xi}^N \triangleq \{\tilde{\xi}_1, \tilde{\xi}_2, \ldots, \tilde{\xi}_N \}$ denote the collection of all samples.
    For $i \in \{1,2,\ldots,m\}$, let $\tilde{\xi}^n_i \triangleq \{\tilde{\xi}_{n(i-1) + 1}, \tilde{\xi}_{n(i-1) + 2}, \ldots, \tilde{\xi}_{n i} \}$ denote the collection of samples used in the $i^{\text{th}}$ replication. 
\end{assumption}

 We start with a basic formulation where we can solve each replication of the SAA problem to optimality and use it to give a brief introduction to the compromise decision methodology. The compromise decision methodology consists of two key steps, which are {\sl replication} step and {\sl aggregation} step (see Algorithm \ref{alg:compromise decision approach}). In the {\sl replication} step, we build a SAA function for each replication, 
 \begin{equation} \label{eq:SAA function in each replication}
     f_{n}(x;\xi^n_i) = \frac{1}{n} \sum_{j=1}^{n} F(x,\xi_{(i-1)n + j}), \ i = 1,2,\ldots,m. 
 \end{equation}
 Then we solve the SAA problem to obtain the minimizer of $f_{n}(x;\xi^n_i)$, which is denoted by $x_{n}(\xi^n_i)$. The problem below illustrates the {\sl replication} step. 
\begin{equation*}
    x_{n}(\xi^n_i) \in \arg \min_{x \in X} f_{n}(x;\xi^n_i), \ i = 1,2,\ldots,m.
\end{equation*}
In the {\sl aggregation} step, we find the compromise decision that minimizes the {\sl aggregation} of the information from SAA functions and their corresponding minimizers. Given a regularizer $\rho \in (0, \infty)$, this {\sl aggregation} problem (which we refer to as \textsl{exact} compromise decision problem) can be formulated as follows: 
\begin{equation} \label{eq:exact compromise SP}
    \min_{x \in X} \frac{1}{m} \sum_{i=1}^m f_{n}(x;\xi^n_i) + \frac{\rho}{2} \left\|x - \frac{1}{m} \sum_{i=1}^m x_{n}(\xi^n_i) \right\|^2.
\end{equation}
We let $\tilde{x}_{N,\rho}(\xi^N)$ denote the optimal solution of the \textsl{exact} compromise decision problem in (\ref{eq:exact compromise SP}). Let $\theta_{N,\rho}(\xi^N)$ and $\theta^*$ denote the optimal values of \textsl{exact} compromise decision problem in (\ref{eq:exact compromise SP}) and true problem in (\ref{eq:generic sp}), respectively. We shall show that $\mathbb{E}[|\theta_{N,\rho}(\tilde{\xi}^N) - \theta^*|] \leq \frac{6m - 4}{m n^\lambda} N_F$ and $\text{Var}\left[|\theta_{N,\rho}(\tilde{\xi}^N) - \theta^*| \right] \leq \frac{36 M_F N_F}{m n^{\lambda}} + \frac{36 N_F^2}{n^{2\lambda}}$, for $\lambda \in (0, \frac{1}{2})$ and a problem-specific constant $N_F$ defined in Lemma \ref{lemma:rademacher complexity of objective:holder continuity}. 

In the second part of this section, we will focus on a more general case in which only inexact optimal solutions from $m$ replications are accessible to us. Let $X_{n,\epsilon}(\xi^n_i)$ denote an $\epsilon$-optimal solution set of the $i^\text{th}$ replication of the SAA problem:
\begin{equation} \label{eq:epsilon-optimal solution set for each replication}
    X_{n,\epsilon}(\xi^n_i) = \bigg\{x \in X: f_n(x; \xi^n_i) \leq \text{minimum}\Big\{f_{n}(x; \xi^n_i) | x \in X \Big\} + \epsilon \bigg\}
\end{equation}
We let $x_{n,\epsilon}(\xi^n_i)$ denote one element of $X_{n,\epsilon}(\xi^n_i)$ (i.e., $x_{n,\epsilon}(\xi^n_i) \in X_{n,\epsilon}(\xi^n_i)$). As a result, the \textsl{inexact} compromise decision problem which allows a collection of $\epsilon$-optimal solutions from all the $m$ replications is written below:
\begin{equation} \label{eq:inexact compromise SP}
    \min_{x \in X} \frac{1}{m} \sum_{i=1}^m f_{n}(x; \xi^n_i) + \frac{\rho}{2} \left\|x - \frac{1}{m} \sum_{i=1}^m x_{n,\epsilon}(\xi^n_i) \right\|^2.
\end{equation}
Let $\hat{X}_{N,\rho,\epsilon}$ and $X^*_\epsilon$ denote $\epsilon$-optimal solution sets of \textsl{inexact} compromise decision problem in (\ref{eq:inexact compromise SP}) and true problem in (\ref{eq:generic sp}), respectively.  We shall measure the reliability of the compromise decisions to (\ref{eq:inexact compromise SP}) by showing that 
\begin{enumerate}
    \item $\mathbb{E}[\Delta (\hat{X}_{N,\rho,\epsilon}(\tilde{\xi}^N), X^*_\epsilon)] \leq \frac{D_X N_F }{\epsilon} \frac{8m - 4}{m n^{\lambda}} + 2 \sqrt{\epsilon/ \rho}$.
    \item $\Pr \left\{\frac{\epsilon n^\lambda}{2 D_X} \Delta(\hat{X}_{N,\rho,\epsilon}(\tilde{\xi}^N), X_{\epsilon}^*) \geq C + t \right\} \leq m \exp \left\{-\frac{m^2 t^2}{2 M^2_F(2m - 1)^2} \right\}$ for $\rho = n$.
    \item $\text{Var}[\Delta(\hat{X}_{N,\rho,\epsilon}(\tilde{\xi}^N), X_{\epsilon}^*)] \leq \frac{64 D_X^2 M_F N_F}{m \epsilon^2 n^\lambda} + (\frac{8D_X N_F}{\epsilon n^\lambda} + 2 \sqrt{\epsilon / \rho})^2.$
\end{enumerate}

\subsection{Sample Complexity of the Exact Compromise Decision Problem} \label{sec:sample complexity:cost}
 In this subsection, we focus on deriving the sample complexity of the optimal cost of the \textsl{exact} compromise decision problem (\ref{eq:exact compromise SP}). Recall that we let $f(x)$ denote the expectation of the random cost function (i.e., $f(x) = \mathbb{E}_{\tilde{\xi}}[F(x,\tilde{\xi})]$ ) and let $\theta^*$ denote the optimal cost of the true problem (i.e., $\theta^* = \min_{x \in X} f(x)$).\\ 
 \indent We start with introducing several notations in the {\sl replication} step. We let $\theta_{n}(\xi^n_i)$ denote the optimal cost of the $i^{\text{th}}$ SAA problem:
 \begin{equation} \label{eq:optimal cost of one replication of SAA}
     \theta_{n}(\xi^n_i) = \min_{x \in X} f_{n} (x;\xi^n_i), \ i = 1,2,\ldots,m.
 \end{equation}
In the last section, we have defined the estimation error of objective function estimate in each replication as follows: 
\begin{equation} \label{eq:estimate error}
    \begin{aligned}
        \delta^f_{n}(\xi^n_i) = \sup_{x \in X} |f_{n} (x;\xi^n_i) - f(x)|, \ i = 1,2,\ldots,m.
    \end{aligned}
\end{equation}
With a symmetric argument (see \cite{boucheron2005theory,ermoliev2013sample} or (\ref{eq:symmetric05}) in Appendix \ref{appendix:symmetric argument of rademacher averages}), we have shown that $\mathbb{E}[\delta^f_{n}(\tilde{\xi}^n_i)] \leq 2 R_n(F,\Xi) \leq \frac{2 N_F}{n^{\lambda}}$ for any $\lambda \in (0,\frac{1}{2})$.\\
\indent Next, we provide a relation for deriving the upper bound of the variance of interest. Given two non-negative random variables $X, Y$ so that $0 \leq X \leq Y \ a.s.$, we have the following relationship between $\text{Var}[X]$ and $\text{Var}[Y]$: 
 \begin{equation} \label{eq:common bound on the variance}
     \text{Var}[X] = \mathbb{E}[X^2] - \left(\mathbb{E}[X] \right)^2 \leq \mathbb{E}[X^2] \leq \mathbb{E}[Y^2] = \text{Var}[Y] + \left(\mathbb{E}[Y] \right)^2,
 \end{equation}

\begin{theorem} \label{thm:sample complexity cost of compromise decision problem}
    Suppose that Assumptions \ref{assumption:part 1}, \ref{assumption:part 2}, and \ref{assumption:part 2(2)} hold. Let $N_F$ be a constant defined in Lemma \ref{lemma:rademacher complexity of objective:holder continuity}. Then the following hold:
    \begin{enumerate} 
        \item \begin{equation*}
        \mathbb{E}\left[|\theta_{N,\rho}(\tilde{\xi}^N) - \theta^*|\right] \leq \frac{6m - 4}{m} R_n(F,\Xi) \leq \frac{6m - 4}{m n^\lambda} N_F, 
    \end{equation*}
        \item  \begin{equation*}
            \text{Var}\left[|\theta_{N,\rho}(\tilde{\xi}^N) - \theta^*| \right] \leq \frac{36 M_F N_F}{m n^{\lambda}} + \frac{36 N_F^2}{n^{2\lambda}},
        \end{equation*}
    \end{enumerate}
    
   \noindent where $R_n(F,\Xi)$ is the Rademacher average associated with $F$ and sample size $n$ in Definition \ref{def:rademacher average of a function class}, and $\lambda \in (0, \frac{1}{2})$.
\end{theorem}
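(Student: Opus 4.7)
My plan is to establish pointwise, sample-path bounds of the form
\[
-\tfrac{1}{m}\!\sum_i \delta_n^f(\xi_i^n)\;\le\;\theta_{N,\rho}(\xi^N)-\theta^*\;\le\;\tfrac{3}{m}\!\sum_i\delta_n^f(\xi_i^n),
\]
and then convert these to bounds on $\mathbb{E}[|\theta_{N,\rho}(\tilde\xi^N)-\theta^*|]$ via Theorem~2.1 of the paper (Ermoliev--Norkin's Rademacher bound on $\mathbb{E}[\delta_n^f]$), and to a variance bound via the inequality (3.7) combined with independence of the $m$ replications.

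For the \emph{lower} bound on $\theta_{N,\rho}(\xi^N)$, I drop the non-negative proximal term to obtain $\theta_{N,\rho}(\xi^N)\ge \min_{x\in X} \tfrac{1}{m}\sum_i f_n(x;\xi_i^n)=\theta_N(\xi^N)$, and then apply the classical SAA argument $\theta_N(\xi^N)\ge\theta^*-\delta_N^f(\xi^N)$, together with the triangle bound $\delta_N^f(\xi^N)\le\tfrac{1}{m}\sum_i\delta_n^f(\xi_i^n)$. For the \emph{upper} bound, I evaluate the compromise objective at $x=\bar{x}_N(\xi^N)$ so the proximal term vanishes, giving $\theta_{N,\rho}(\xi^N)\le f_N(\bar{x}_N;\xi^N)$. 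Writing $f_N(\bar{x}_N;\xi^N)-\theta^*=[f_N(\bar{x}_N;\xi^N)-f(\bar{x}_N)]+[f(\bar{x}_N)-\theta^*]$, I bound the first bracket by $\delta_N^f(\xi^N)\le\tfrac{1}{m}\sum_i\delta_n^f(\xi_i^n)$. For the second, I use convexity of $f$ (Assumption~3.1) and Jensen's inequality to get $f(\bar{x}_N)\le\tfrac{1}{m}\sum_j f(x_n(\xi_j^n))$, after which the two-step SAA estimate $f(x_n(\xi_j^n))\le f_n(x_n(\xi_j^n);\xi_j^n)+\delta_n^f(\xi_j^n)\le f_n(x^*;\xi_j^n)+\delta_n^f(\xi_j^n)\le\theta^*+2\delta_n^f(\xi_j^n)$ yields $f(\bar{x}_N)-\theta^*\le\tfrac{2}{m}\sum_j\delta_n^f(\xi_j^n)$. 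Summing gives the claimed pointwise bound.

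Taking absolute values and passing to expectations, Theorem~2.1 gives $\mathbb{E}[\delta_n^f(\tilde\xi_i^n)]\le 2R_n(F,\Xi)\le 2N_F/n^\lambda$, yielding an expectation bound of order $R_n(F,\Xi)$. To match the stated constant $\tfrac{6m-4}{m}$ (rather than the looser $6$ one gets from summing the pointwise bounds directly), I refine the analysis by exploiting the independence between $x_n(\xi_j^n)$ and $\xi_i^n$ for $j\ne i$: conditioning on $\xi_j^n$, we get $\mathbb{E}[f_n(x_n(\xi_j^n);\xi_i^n)\mid\xi_j^n]=f(x_n(\xi_j^n))$, so one power of $\delta_n^f$ is saved in expectation on the cross terms relative to the diagonal, contributing a factor of $(m-1)/m$. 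For the variance, I invoke the paper's relation (3.7): if $0\le X\le Y$ a.s.\ then $\mathrm{Var}[X]\le\mathrm{Var}[Y]+(\mathbb{E}[Y])^2$. Set $Y=\tfrac{3}{m}\sum_i\delta_n^f(\tilde\xi_i^n)$. By Assumption~3.3 the $\delta_n^f(\tilde\xi_i^n)$ are i.i.d., so $\mathrm{Var}[Y]=\tfrac{9}{m}\mathrm{Var}[\delta_n^f(\tilde\xi_1^n)]\le\tfrac{9}{m}\cdot\tfrac{4M_FN_F}{n^\lambda}$ by the paper's inequality (2.12), while $(\mathbb{E}[Y])^2\le 36R_n^2\le 36N_F^2/n^{2\lambda}$. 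Summing gives exactly the stated bound.

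The main obstacle is the tightening that produces the constant $\tfrac{6m-4}{m}$ rather than $6$: a naive direct use of the pointwise bound gives the cruder constant, so one must do the expectation argument at the level of $\mathbb{E}[f_n(\bar{x}_N;\tilde\xi_i^n)-\theta_n(\tilde\xi_i^n)]$ and use the mean-cancellation identity $\mathbb{E}[f_n(y;\tilde\xi_i^n)]=f(y)$ for any $y$ independent of $\tilde\xi_i^n$. Everything else (the $\sim O(R_n)$ expectation rate and the two-term variance decomposition into an $O(1/(mn^\lambda))$ independence-driven term and an $O(1/n^{2\lambda})$ bias-squared term) then follows by routine combination of the Rademacher complexity bounds already established in Lemma~2.4 and Theorem~2.1.
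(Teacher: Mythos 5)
Your overall architecture is the same as the paper's: a pointwise sandwich of $\theta_{N,\rho}(\xi^N)-\theta^*$ in terms of the errors $\delta_n^f(\xi_i^n)$, then Theorem \ref{theorem:bound of function estimation error} for the expectation, and the relation (\ref{eq:common bound on the variance}) together with (\ref{eq:bound of the varaince}) and independence across replications for the variance. Your lower bound and your entire variance argument are correct and essentially identical to the paper's (the paper also coarsens its pointwise coefficient to $3/m$ before computing the variance, so Part 2 goes through exactly as you wrote it). The genuine gap is in Part 1: your upper bound, routed through convexity of $f$ and the estimate $f(x_n(\xi_j^n))\le\theta^*+2\delta_n^f(\xi_j^n)$, charges every replication the full $\tfrac{3}{m}\delta_n^f(\xi_j^n)$ and therefore only yields $\mathbb{E}[|\theta_{N,\rho}(\tilde\xi^N)-\theta^*|]\le 6R_n(F,\Xi)$, which is strictly weaker than the claimed $\tfrac{6m-4}{m}R_n(F,\Xi)$.

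The refinement you propose to recover the constant --- conditioning on $\xi_j^n$ so that $\mathbb{E}[f_n(x_n(\xi_j^n);\xi_i^n)\mid\xi_j^n]=f(x_n(\xi_j^n))$ for $i\ne j$ --- is the wrong mechanism and does not deliver the stated bound. First, mean-cancellation controls the signed quantity $\mathbb{E}[\theta_{N,\rho}-\theta^*]$, not $\mathbb{E}[|\theta_{N,\rho}-\theta^*|]$; converting a one-sided in-expectation bound into a bound on the expected absolute value forces you to pay again for the negative part, and the advantage is lost. Second, if you carry the computation out, cancellation on the off-diagonal terms gives roughly $\tfrac{4m-2}{m}R_n$ for the signed expectation, not $\tfrac{6m-4}{m}$ --- the theorem's constant does not come from independence at all. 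It comes from a purely pointwise observation: applying convexity of each \emph{empirical} function $f_n(\cdot;\xi_i^n)$ at the averaged point gives $\theta_{N,\rho}(\xi^N)\le\tfrac{1}{m^2}\sum_{i,j}f_n(x_n(\xi_j^n);\xi_i^n)$, and in this double sum the $m$ diagonal terms are exact, $f_n(x_n(\xi_j^n);\xi_j^n)=\theta_n(\xi_j^n)\le\theta^*+\delta_n^f(\xi_j^n)$, costing one $\delta$ each, while only the $m(m-1)$ off-diagonal terms incur the additional $\delta_n^f(\xi_i^n)+\delta_n^f(\xi_j^n)$. This yields the pointwise coefficient $\tfrac{3m-2}{m^2}$ on $\sum_i\delta_n^f(\xi_i^n)$, hence $\tfrac{6m-4}{m}R_n(F,\Xi)$ after taking expectations. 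Replacing your Jensen step on $f$ by this decomposition repairs Part 1; everything else in your proposal stands.
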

\proof
To begin with, we give an overview of the proof strategy. The proof consists of three steps. In step 1, we aim to use the supremum of the sampling error defined in (\ref{eq:estimation error of f})
to bound $|\theta_{N,\rho}(\tilde{\xi}^N) - \theta^*|$. The idea is based on the convexity of the objective function in Assumption \ref{assumption:part 2} and triangular inequality. By convexity, we observe that $f_{n} \left(\frac{1}{m} \sum_{j=1}^m x_{n}(\tilde{\xi}^n_j);\tilde{\xi}^n_i \right) \leq  \frac{1}{m} \sum_{j=1}^m f_{n} \left(x_{n}(\tilde{\xi}^n_j);\tilde{\xi}^n_i \right)$. Note that
$|f_{n}\left(x;\tilde{\xi}^n_i \right) - f(x)|$ is bounded above by $\delta^f_{n}(\tilde{\xi}^n_i)$ and $|f_{n}\left(x;\tilde{\xi}^n_j \right) - f(x)|$ is bounded above by $\delta^f_{n}(\tilde{\xi}^n_j)$. Then by triangular inequality, $|f_{n}\left(x;\tilde{\xi}^n_i \right) - f_{n}\left(x;\tilde{\xi}^n_j \right)|$ is bounded above by $\delta^f_{n}(\tilde{\xi}^n_i) + \delta^f_{n}(\tilde{\xi}^n_j)$. \\
\indent In step 2, we utilize Theorem \ref{theorem:bound of function estimation error} to derive the upper bound of the supremum of the sampling error in expectation. In step 3, we use the variance relation in (\ref{eq:common bound on the variance}) and the upper bound of the variance of supremum of the sampling error in (\ref{eq:bound of the varaince}) to get the upper bound of $\text{Var}\left[|\theta_{N,\rho}(\tilde{\xi}^N) - \theta^*| \right]$. \\

\noindent \textbf{(Step 1: Upper Bound of $|\theta_{N,\rho}(\tilde{\xi}^N) - \theta^*|$)}. We aim to derive the upper bound and lower bound of $\theta_{N,\rho}(\tilde{\xi}^N) - \theta^*$ separately. By optimality of the problem (\ref{eq:exact compromise SP}), we have 
\begin{equation} \label{eq:complexity:cost1}
\begin{aligned}
    \theta_{N,\rho}(\tilde{\xi}^N) &= \frac{1}{m} \sum_{i=1}^m f_{n}\left(\tilde{x}_{N,\rho}(\tilde{\xi}^N);\tilde{\xi}^n_i \right) + \frac{\rho}{2} \left\| \tilde{x}_{N,\rho}(\tilde{\xi}^N) - \frac{1}{m} \sum_{i=1}^m x_{n}(\tilde{\xi}^n_i) \right\|^2 \\
    &\leq \frac{1}{m} \sum_{i=1}^m f_{n} \left(\frac{1}{m} \sum_{j=1}^m x_{n}(\tilde{\xi}^n_j);\tilde{\xi}^n_i \right) \\
    &\leq \frac{1}{m^2} \sum_{i=1}^m  \sum_{j=1}^m f_{n} \left(x_{n}(\tilde{\xi}^n_j);\tilde{\xi}^n_i \right) \quad  \text{by convexity of } f_{n} (\cdot ;\tilde{\xi}^n_i).
\end{aligned}
\end{equation}
On the other hand, for $i, j \in \{1,2,\ldots,m\}$ and $i \neq j$, we have
\begin{equation} \label{eq:complexity:cost2}
\begin{aligned}
    f_{n}\left(x_{n}(\tilde{\xi}^n_j);\tilde{\xi}^n_i \right) &=  f_{n}\left(x_{n}(\tilde{\xi}^n_j);\tilde{\xi}^n_j \right) + f_{n}\left(x_{n}(\tilde{\xi}^n_j);\tilde{\xi}^n_i\right) - f_{n}\left(x_{n}(\tilde{\xi}^n_j);\tilde{\xi}^n_j\right)
    \\
    &\leq \theta_{n}(\tilde{\xi}^n_j) + \left|f_{n}\left(x_{n}(\tilde{\xi}^n_j);\tilde{\xi}^n_i \right) - f_{n}\left(x_{n}(\tilde{\xi}^n_j);\tilde{\xi}^n_j \right) \right| \\
    &\leq \theta_{n}(\tilde{\xi}^n_j) + \left|f_{n}\left(x_{n}(\tilde{\xi}^n_j);\tilde{\xi}^n_i \right) - f\left(x_{n}(\tilde{\xi}^n_j)\right) \right| \\
    &\quad + \left|f_{n}\left(x_{n}(\tilde{\xi}^n_j);\tilde{\xi}^n_j \right) - f\left(x_{n}(\tilde{\xi}^n_j) \right) \right| \\
    &\leq \theta_{n}(\tilde{\xi}^n_j) + \delta^f_{n}(\tilde{\xi}^n_i) + \delta^f_{n}(\tilde{\xi}^n_j), \quad \text{ by (\ref{eq:estimate error})}.
\end{aligned}
\end{equation}
Let $x^* \in \arg \min_{x \in X} f(x)$, then we have 
\begin{equation} \label{eq:complexity:cost2:02}
    \begin{aligned}
        \theta_{n}(\tilde{\xi}^n_i) - \theta^* &\leq f_{n}(x^*;\tilde{\xi}^n_i) - \theta^*  \quad \quad \text{by the definition of } \theta_{n}(\tilde{\xi}^n_i) \text{ in (\ref{eq:optimal cost of one replication of SAA})}\\
        & = f_{n}(x^*;\tilde{\xi}^n_i) - f(x^*) \\
        & \leq \sup_{x \in X} |f_{n}(x;\tilde{\xi}^n_i) - f(x)| = \delta^f_{n}(\tilde{\xi}^n_i).
    \end{aligned}
\end{equation}
Furthermore, we have 
\begin{equation}
    \begin{aligned}
        \theta_{n}(\tilde{\xi}^n_i) - \theta^* &\geq f_{n}\left(x_{n}(\tilde{\xi}^n_i);\tilde{\xi}^n_i \right) - f\left(x_{n}(\tilde{\xi}^n_i)\right) && \text{ by } f\left(x_{n}(\tilde{\xi}^n_i) \right) \geq \theta^* \\
        & \geq - \sup_{x \in X} \left|f_{n}(x;\tilde{\xi}^n_i) - f(x) \right| = - \delta^f_{n}(\tilde{\xi}^n_i).
    \end{aligned}
\end{equation}
The combination of (\ref{eq:complexity:cost1}) and (\ref{eq:complexity:cost2}) implies that 
\begin{equation} \label{eq:complexity:cost3}
    \theta_{N,\rho}(\tilde{\xi}^N) \leq \frac{1}{m} \sum_{i=1}^m \theta_{n}(\tilde{\xi}^n_i) + \frac{1}{m^2}\sum_{\substack{i,j=1,\\ i \neq j}}^m \left(\delta^f_{n}(\tilde{\xi}^n_i) + \delta^f_{n}(\tilde{\xi}^n_j) \right).
\end{equation}
By subtracting $\theta^*$ from both sides of (\ref{eq:complexity:cost3}), we have the upper bound of $\theta_{N,\rho}(\tilde{\xi}^N) - \theta^*$ shown below:
\begin{equation} \label{eq:complexity:cost4}
\begin{aligned}
    \theta_{N,\rho}(\tilde{\xi}^N) - \theta^* &\leq \frac{1}{m} \sum_{i=1}^m \left(\theta_{n}(\tilde{\xi}^n_i) - \theta^* \right) + \frac{1}{m^2}\sum_{\substack{i,j=1,\\ i \neq j}}^m \left(\delta^f_{n}(\tilde{\xi}^n_i) + \delta^f_{n}(\tilde{\xi}^n_j) \right) \\ 
    &\leq \frac{1}{m} \sum_{i=1}^m \delta^f_{n}(\tilde{\xi}^n_i) + \frac{1}{m^2} \sum_{\substack{i,j=1,\\ i \neq j}}^m \left(\delta^f_{n}(\tilde{\xi}^n_i) + \delta^f_{n}(\tilde{\xi}^n_j) \right) . 
\end{aligned} 
\end{equation}
As for the lower bound of $\theta_{N,\rho}(\tilde{\xi}^N) - \theta^*$, we observe that 
\begin{equation*}
    \begin{aligned}
        \frac{1}{m} \sum_{i=1}^m f_{n}(\tilde{x}_{N,\rho}(\tilde{\xi}^N);\tilde{\xi}^n_i) + \frac{\rho}{2} \left\| \tilde{x}_{N,\rho}(\tilde{\xi}^N) - \frac{1}{m} \sum_{i=1}^m x_{n}(\tilde{\xi}^n_i) \right\|^2 &\geq \frac{1}{m} \sum_{i=1}^m f_{n} \left(\tilde{x}_{N,\rho}(\tilde{\xi}^N);\tilde{\xi}^n_i \right) \\
        &\geq \frac{1}{m} \sum_{i=1}^m \theta_{n}(\tilde{\xi}^n_i). 
    \end{aligned}
\end{equation*}
Hence, this implies that 
\begin{equation} \label{eq:complexity:cost5}
\begin{aligned}
    \theta_{N,\rho}(\tilde{\xi}^N) - \theta^* &\geq \frac{1}{m} \sum_{i=1}^m \left(\theta_{n}(\tilde{\xi}^n_i) - \theta^* \right) \\
    & \geq -\frac{1}{m} \sum_{i=1}^m \delta^f_{n}(\tilde{\xi}^n_i)
\end{aligned}
\end{equation}
Combining (\ref{eq:complexity:cost4}) and (\ref{eq:complexity:cost5}) we derive the upper bound of $\left|\theta_{N,\rho}(\tilde{\xi}^N) - \theta^* \right|$ below:
\begin{equation} \label{eq:complexity:cost6}
    \left|\theta_{N,\rho}(\tilde{\xi}^N) - \theta^* \right|
    \leq \frac{1}{m} \sum_{i=1}^m \delta_{n}(\tilde{\xi}^n_i) + \frac{1}{m^2} \sum_{\substack{i,j=1,\\ i \neq j}}^m \left(\delta^f_{n}(\tilde{\xi}^n_i) + \delta^f_{n}(\tilde{\xi}^n_j) \right).  
\end{equation}
\noindent \textbf{(Step 2: Upper bound of $\mathbb{E}\left[\left|\theta_{N,\rho}(\tilde{\xi}^N) - \theta^*\right| \right]$)}. Taking expectation on both sides of (\ref{eq:complexity:cost6}) and applying Theorem \ref{theorem:bound of function estimation error}, we get 
\begin{equation*} 
\begin{aligned}
    \mathbb{E}\left[\left|\theta_{N,\rho}(\tilde{\xi}^N) - \theta^*\right| \right] &\leq 2 R_n(F,\Xi) + \frac{4m(m-1)}{m^2} R_n(F,\Xi) \\
    &= \frac{6m - 4}{m n^\lambda} N_F. 
\end{aligned}
\end{equation*}
\noindent \textbf{(Step 3: Upper bound of $\text{Var}\left[\left|\theta_{N,\rho}(\tilde{\xi}^N) - \theta^*\right|\right]$)}. Next, we will derive the upper bound of the variance of $|\theta_{N,\rho}(\tilde{\xi}^N) - \theta^*|$. According to (\ref{eq:complexity:cost6}), we have 
\begin{equation} \label{eq:complexity:cost7}
    \left|\theta_{N,\rho}(\tilde{\xi}^N) - \theta^* \right| \leq \sum_{i=1}^m \left(\frac{1}{m} + \frac{2(m-1)}{m^2} \right) \delta^f_n(\tilde{\xi}^n_i) \leq \sum_{i=1}^m \frac{3}{m} \delta^f_n(\tilde{\xi}^n_i).
\end{equation}
By Assumption \ref{assumption:part 2(2)}, $\tilde{\xi}^n_i$ is independent of $\tilde{\xi}^n_j$ for $i\neq j$. Hence, we have 
\begin{equation} \label{eq:complexity:cost8}
    \text{Var}\left[\sum_{i=1}^m \frac{3}{m} \delta^f_n(\tilde{\xi}^n_i)\right] = \frac{9}{m^2} \sum_{i=1}^m \text{Var}\left[\delta^f_n(\tilde{\xi}^n_i)\right] \leq \frac{36 M_F N_F}{m n^{\lambda}}.
\end{equation}
The last inequality in (\ref{eq:complexity:cost8}) is obtained by using the upper bound of the variance of $\delta^f_n(\tilde{\xi}^n)$ in (\ref{eq:bound of the varaince}). Again, by Assumption \ref{assumption:part 2(2)}, we have 
\begin{equation} \label{eq:complexity:cost9}
    \mathbb{E}\left[\sum_{i=1}^m \frac{3}{m} \delta^f_n(\tilde{\xi}^n_i)\right] \leq \frac{6 N_F}{n^\lambda}.
\end{equation}
Hence, using the relation in (\ref{eq:common bound on the variance}), and combining  (\ref{eq:complexity:cost7}) - (\ref{eq:complexity:cost9}), we have
\begin{equation*}
    \text{Var}\left[\left|\theta_{N,\rho}(\tilde{\xi}^N) - \theta^*\right|\right] \leq \frac{36 M_F N_F}{m n^{\lambda}} + \frac{36 N_F^2}{n^{2\lambda}}.
\end{equation*}
\endproof

In Theorem \ref{thm:sample complexity cost of compromise decision problem}, the upper bound of the variance of $|\theta_{N,\rho}(\tilde{\xi}^N) - \theta^*|$ consists of two components, where $\frac{36 M_F N_F}{m n^{\lambda}}$ is due to the average of $m$ replications of sample average approximation of the true function and $\frac{36 N_F^2}{n^{2\lambda}}$ is due to the sampling error from each replication.

\subsection{Sample Complexity of the Inexact Compromise Decision Problem} \label{sec:sample complexity:solution}
Here, we aim to study the reliability of the $\epsilon-$optimal compromise decisions, $\hat{x}_{N, \rho, \epsilon}(\xi^N)$, to the \textsl{inexact} compromise decision problem (\ref{eq:inexact compromise SP}). To achieve this goal, we first need to find a suitable upper bound of the pessimistic distance, $\Delta(\hat{X}_{N,\rho,\epsilon}(\xi^N), X_{\epsilon}^*)$. With the help of theorem of the Lipschtizan behavior of the $\epsilon$-optimal solution set (Theorem \ref{theorem:lipschitz continuity of solution set}), we will show that such upper bound could be further bounded by a function of the supremum of the sampling error of the objective function estimate (see Definition \ref{def:supremum of sampling error of the objective function estimate}). \\
\indent To begin with, we introduce several notations in the {\sl aggregation} step to ease the analysis. 
Let $\bar{f}_N(x;\xi^N)$ denote the average of SAA functions across $m$ replications: 
\begin{equation} \label{eq:aggregated SAA objective function}
    \bar{f}_N(x;\xi^N) = \frac{1}{m} \sum_{i=1}^m f_{n}(x;\xi^n_i).
\end{equation}
We then let $\bar{\theta}_{N}(\xi^N)$ denote the minimum of $\bar{f}_N(x;\xi^N)$ over $X$:
\begin{equation} \label{eq:aggregated SAA problem}
    \bar{\theta}_{N}(\xi^N) = \min_{x \in X} \bar{f}_N(x;\xi^N).
\end{equation}
We further let $\bar{X}_{N,\epsilon}(\xi^N)$ denote $\epsilon$-optimal solution set of the aggregated SAA problem in (\ref{eq:aggregated SAA problem}):
\begin{equation} \label{eq:epsilon-optimal solution set:agg}
    \bar{X}_{N,\epsilon}(\xi^N) = \left\{x \in X: \bar{f}_N(x;\xi^N) \leq \bar{\theta}_{N}(\xi^N) + \epsilon \right\}.
\end{equation}
We now proceed to introduce several notations for the \textsl{inexact} compromise decision problem in (\ref{eq:inexact compromise SP}).
Given $x_{n,\epsilon}(\xi^n_i) \in X_{n,\epsilon}(\xi^n_i), \ i = 1,\ldots,m$, defined in (\ref{eq:epsilon-optimal solution set for each replication}), we let $\theta_{N,\rho,\epsilon}(\xi^N)$ denote the optimal cost of the \textsl{inexact} compromise decision problem:
\begin{equation} \label{eq:compromise decision problem:SAA}
    \theta_{N,\rho,\epsilon}(\xi^N) = \min_{x \in X} \bar{f}_N(x;\xi^N) + \frac{\rho}{2} \left\|x - \frac{1}{m} \sum_{i=1}^m x_{n,\epsilon}(\xi^n_i) \right\|^2.
\end{equation}
The associated $\epsilon$-optimal solution set of the \textsl{inexact} compromise decision problem is defined as follows:
\begin{equation} \label{notations:inexact compromise decicisions}
        \hat{X}_{N, \rho, \epsilon}(\xi^N) = \left\{x \in X: \bar{f}_N(x;\xi^N) + \frac{\rho}{2} \left\|x - \frac{1}{m} \sum_{i=1}^m x_{n,\epsilon}(\xi^n_i) \right\|^2 \leq \theta_{N,\rho,\epsilon} + \epsilon \right\}. 
\end{equation}
Recall that $\theta^*$ is the optimal cost of the true problem in (\ref{eq:generic sp}).
We let $X^*_{\epsilon}$ denote the $\epsilon$-optimal solution set of the true problem: 
\begin{equation} \label{eq:epsilon optimal solution set:true problem}
    X^*_{\epsilon} = \left\{x \in X: f(x) \leq \theta^* + \epsilon \right\}.
\end{equation}

The following lemma sets up the relationship among elements of $\epsilon$-optimal solution set of the \textsl{inexact} compromise decision problem, $m$ replications of SAA problems, and the true problem. 
\begin{lemma} \label{lemma:compromise:relation}
     Let $\hat{X}_{N,\rho,\epsilon}(\xi^N)$, $\bar{X}_{N,\epsilon}(\xi^N)$, $X_{\epsilon}^*$, and $X_{n,\epsilon}(\xi^n_j)$ be defined in (\ref{notations:inexact compromise decicisions}), (\ref{eq:epsilon-optimal solution set:agg}), (\ref{eq:epsilon optimal solution set:true problem}), and (\ref{eq:epsilon-optimal solution set for each replication}), respectively. Let $\hat{x}_{N,\rho,\epsilon}(\xi^N) \in \hat{X}_{N,\rho,\epsilon}(\xi^N)$, $\bar{x}_{N,\epsilon}(\xi^N) \in \bar{X}_{N,\epsilon}(\xi^N)$, $x_{\epsilon}^* \in X_{\epsilon}^*$, $x_{n,\epsilon}(\xi^n_j) \in X_{n,\epsilon}(\xi^n_j)$, $j = 1,2,\ldots,m$. Then the following relation holds:
    \begin{equation*}
         \left\|\hat{x}_{N,\rho,\epsilon}(\xi^N) - x_{\epsilon}^* \right\| \leq \left\| \bar{x}_{N,\epsilon}(\xi^N) - \frac{1}{m} \sum_{j=1}^m x_{n,\epsilon}(\xi^n_j) \right\| + \left\|\frac{1}{m} \sum_{j=1}^m x_{n,\epsilon}(\xi^n_j) - x_{\epsilon}^* \right\| + 2 \sqrt{\epsilon / \rho}. 
    \end{equation*}
\end{lemma}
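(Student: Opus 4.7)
The plan is to decompose $\|\hat{x}_{N,\rho,\epsilon}(\xi^N) - x_\epsilon^*\|$ via two successive triangle inequalities, using the averaged replication solution $\bar{x}_N \triangleq \frac{1}{m}\sum_{j=1}^m x_{n,\epsilon}(\xi^n_j)$ and the \emph{exact} minimizer $x^*_{N,\rho}$ of the inexact compromise problem (\ref{eq:compromise decision problem:SAA}) as waypoints, and then to exploit the fact that the compromise objective is $\rho$-strongly convex (thanks to the quadratic regularizer) to control the residual terms by quantities of order $\sqrt{\epsilon/\rho}$.

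First I would write $\|\hat{x}_{N,\rho,\epsilon}(\xi^N) - x_\epsilon^*\| \leq \|\hat{x}_{N,\rho,\epsilon}(\xi^N) - \bar{x}_N\| + \|\bar{x}_N - x_\epsilon^*\|$, at which point the second summand already matches the middle term on the right-hand side of the claim, so the task reduces to dominating $\|\hat{x}_{N,\rho,\epsilon}(\xi^N) - \bar{x}_N\|$. I then apply a second triangle inequality through $x^*_{N,\rho}$:
\begin{equation*}
\|\hat{x}_{N,\rho,\epsilon}(\xi^N) - \bar{x}_N\| \leq \|\hat{x}_{N,\rho,\epsilon}(\xi^N) - x^*_{N,\rho}\| + \|x^*_{N,\rho} - \bar{x}_N\|.
\end{equation*}
The first piece is standard: the objective $g(x) \triangleq \bar{f}_N(x;\xi^N) + (\rho/2)\|x - \bar{x}_N\|^2$ is $\rho$-strongly convex, so $\epsilon$-optimality of $\hat{x}_{N,\rho,\epsilon}(\xi^N)$ forces $(\rho/2)\|\hat{x}_{N,\rho,\epsilon}(\xi^N) - x^*_{N,\rho}\|^2 \leq \epsilon$, i.e., $\|\hat{x}_{N,\rho,\epsilon}(\xi^N) - x^*_{N,\rho}\| \leq \sqrt{2\epsilon/\rho}$.

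The delicate step is tying $\|x^*_{N,\rho} - \bar{x}_N\|$ back to $\|\bar{x}_{N,\epsilon}(\xi^N) - \bar{x}_N\|$. Here I would plug $\bar{x}_{N,\epsilon}(\xi^N)$ into $g$ as a test point: since $x^*_{N,\rho}$ is the minimizer of $g$, we have $g(x^*_{N,\rho}) \leq g(\bar{x}_{N,\epsilon}(\xi^N))$. Combining this with the defining inequalities $\bar{f}_N(x^*_{N,\rho};\xi^N) \geq \bar{\theta}_N(\xi^N)$ and $\bar{f}_N(\bar{x}_{N,\epsilon}(\xi^N);\xi^N) \leq \bar{\theta}_N(\xi^N) + \epsilon$ (the latter from the definition of $\bar{X}_{N,\epsilon}(\xi^N)$ in (\ref{eq:epsilon-optimal solution set:agg})) cancels the $\bar{f}_N$ terms and yields $\|x^*_{N,\rho} - \bar{x}_N\|^2 \leq \|\bar{x}_{N,\epsilon}(\xi^N) - \bar{x}_N\|^2 + 2\epsilon/\rho$; the subadditivity $\sqrt{a^2+b^2} \leq a+b$ for $a,b \geq 0$ then gives $\|x^*_{N,\rho} - \bar{x}_N\| \leq \|\bar{x}_{N,\epsilon}(\xi^N) - \bar{x}_N\| + \sqrt{2\epsilon/\rho}$. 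Summing the two $\sqrt{\epsilon/\rho}$-type contributions and absorbing constants into the $2\sqrt{\epsilon/\rho}$ of the statement closes the argument.

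The principal obstacle is this last step: a naive strong-convexity bound around $x^*_{N,\rho}$ alone gives no link to $\bar{x}_{N,\epsilon}(\xi^N)$, and the non-obvious move is to use $\bar{x}_{N,\epsilon}(\xi^N)$ itself as a comparison point inside the compromise objective and let the quadratic regularizer absorb the residual optimality gap $\epsilon$ into a distance bound. Once that insight is in hand, the remainder is routine triangle-inequality bookkeeping.
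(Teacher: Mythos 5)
Your strategy is close in spirit to the paper's, but the last step does not close: the constants do not ``absorb'' into $2\sqrt{\epsilon/\rho}$. Writing $\bar{x}_N = \frac{1}{m}\sum_{j=1}^m x_{n,\epsilon}(\xi^n_j)$, routing the bound on $\|\hat{x}_{N,\rho,\epsilon}(\xi^N) - \bar{x}_N\|$ through the exact minimizer $x^*_{N,\rho}$ costs you two separate square roots: strong convexity gives $\|\hat{x}_{N,\rho,\epsilon}(\xi^N) - x^*_{N,\rho}\| \leq \sqrt{2\epsilon/\rho}$, and your comparison $g(x^*_{N,\rho}) \leq g(\bar{x}_{N,\epsilon}(\xi^N))$ gives $\|x^*_{N,\rho} - \bar{x}_N\| \leq \|\bar{x}_{N,\epsilon}(\xi^N) - \bar{x}_N\| + \sqrt{2\epsilon/\rho}$. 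Summing yields $\|\bar{x}_{N,\epsilon}(\xi^N) - \bar{x}_N\| + 2\sqrt{2}\,\sqrt{\epsilon/\rho}$, and $2\sqrt{2} > 2$, so you have proved the lemma only with the worse constant $2\sqrt{2}\,\sqrt{\epsilon/\rho}$. Setting $\|\bar{x}_{N,\epsilon}(\xi^N) - \bar{x}_N\| = 0$ shows that your two intermediate bounds genuinely cannot imply the stated inequality, so this is not a matter of tightening the bookkeeping within your decomposition; the detour through $x^*_{N,\rho}$ is where the factor $\sqrt{2}$ is lost.

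The paper avoids this loss by never introducing $x^*_{N,\rho}$: it compares $\hat{x}_{N,\rho,\epsilon}(\xi^N)$ and $\bar{x}_{N,\epsilon}(\xi^N)$ directly in the regularized objective. Abbreviating $\hat{x} = \hat{x}_{N,\rho,\epsilon}(\xi^N)$ and $\bar{x}_{N,\epsilon} = \bar{x}_{N,\epsilon}(\xi^N)$, the $\epsilon$-optimality of $\hat{x}$ for (\ref{eq:compromise decision problem:SAA}) gives $\bar{f}_N(\hat{x};\xi^N) + \frac{\rho}{2}\|\hat{x} - \bar{x}_N\|^2 - \epsilon \leq \theta_{N,\rho,\epsilon}(\xi^N) \leq \bar{f}_N(\bar{x}_{N,\epsilon};\xi^N) + \frac{\rho}{2}\|\bar{x}_{N,\epsilon} - \bar{x}_N\|^2$, while the definition of $\bar{X}_{N,\epsilon}(\xi^N)$ gives $\bar{f}_N(\bar{x}_{N,\epsilon};\xi^N) \leq \bar{\theta}_N(\xi^N) + \epsilon \leq \bar{f}_N(\hat{x};\xi^N) + \epsilon$; the function values cancel and both $\epsilon$'s are collected inside a single squared norm, $\|\hat{x} - \bar{x}_N\|^2 \leq \|\bar{x}_{N,\epsilon} - \bar{x}_N\|^2 + 4\epsilon/\rho \leq \left(\|\bar{x}_{N,\epsilon} - \bar{x}_N\| + 2\sqrt{\epsilon/\rho}\right)^2$, so only one square root is taken. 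Your version would still deliver all downstream rate results with a slightly larger constant, but as written it does not establish the inequality as stated.
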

\proof
We provide a brief sketch here, and postpone the details to Appendix \ref{appendix:proof of lemma:compromise:relation}. In essence, we use the optimality of the compromise decision problem in (\ref{eq:compromise decision problem:SAA}) to bound $\left\| \hat{x}_{N,\rho,\epsilon}(\xi^N) - \frac{1}{m} \sum_{j=1}^m x_{n,\epsilon}(\xi^n_j) \right\|$. The final result follows from an  application of the triangle inequality. .
\endproof

With the upper bound of $\|\hat{x}_{N,\rho,\epsilon}(\xi^N) - x_{\epsilon}^* \|$ obtained in Lemma \ref{lemma:compromise:relation}, we derive the upper bound of pessimistic distance of $\hat{X}_{N,\rho,\epsilon}(\xi^N)$ to $X_{\epsilon}^*$, $\Delta(\hat{X}_{N,\rho,\epsilon}(\xi^N), X_{\epsilon}^*)$, in the next lemma.
\begin{lemma} \label{lemma:compromise:relation2}
      Let $\hat{X}_{N,\rho,\epsilon}(\xi^N)$, $\bar{X}_{N,\epsilon}(\xi^N)$, $X_{\epsilon}^*$, and $X_{n,\epsilon}(\xi^n_j)$ be defined in (\ref{notations:inexact compromise decicisions}), (\ref{eq:epsilon-optimal solution set:agg}), (\ref{eq:epsilon optimal solution set:true problem}), and (\ref{eq:epsilon-optimal solution set for each replication}), respectively. Let $\hat{x}_{N,\rho,\epsilon}(\xi^N) \in \hat{X}_{N,\rho,\epsilon}(\xi^N)$, $\bar{x}_{N,\epsilon}(\xi^N) \in \bar{X}_{N,\epsilon}(\xi^N)$, $x_{\epsilon}^* \in X_{\epsilon}^*$, $x_{n,\epsilon}(\xi^n_j) \in X_{n,\epsilon}(\xi^n_j)$, $j = 1,2,\ldots,m$. Then the following relation holds:
    \begin{equation*}
    \begin{aligned}
        \Delta \left(\hat{X}_{N,\rho,\epsilon}(\xi^N), X_{\epsilon}^* \right) &\leq  \inf_{x \in \bar{X}_{N,\epsilon}(\xi^N) } \left\| \frac{1}{m} \sum_{j=1}^m x_{n,\epsilon}(\xi^n_j) - x \right\| \\
        & \quad + \inf_{x \in X_{\epsilon}^*} \left\|\frac{1}{m} \sum_{j=1}^m x_{n,\epsilon}(\xi^n_j) - x \right\| + 2 \sqrt{\epsilon / \rho}.
    \end{aligned}
    \end{equation*}
\end{lemma}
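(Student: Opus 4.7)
The plan is to obtain the bound on $\Delta(\hat{X}_{N,\rho,\epsilon}(\xi^N), X^*_\epsilon)$ directly from Lemma~\ref{lemma:compromise:relation} by carefully applying $\inf$ and $\sup$ operations to the three-term upper bound established there. Writing $\bar{x}_\epsilon \triangleq \frac{1}{m}\sum_{j=1}^m x_{n,\epsilon}(\xi^n_j)$ for brevity, Lemma~\ref{lemma:compromise:relation} provides, for every $\hat{x} \in \hat{X}_{N,\rho,\epsilon}(\xi^N)$, every $\bar{x} \in \bar{X}_{N,\epsilon}(\xi^N)$, and every $x^* \in X^*_\epsilon$,
\begin{equation*}
  \|\hat{x} - x^*\| \;\leq\; \|\bar{x} - \bar{x}_\epsilon\| + \|\bar{x}_\epsilon - x^*\| + 2\sqrt{\epsilon/\rho}.
\end{equation*}
The claimed inequality is obtained by recognizing that each term on the right depends on a different ``free'' variable, so infima can be passed through one at a time.

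First, I would fix $\hat{x}$ and $\bar{x}$ and take infimum over $x^* \in X^*_\epsilon$ on both sides. Since the LHS $\|\hat{x} - x^*\|$ is bounded above by the same quantity for every $x^*$, I can first replace the LHS by $\inf_{x^* \in X^*_\epsilon}\|\hat{x} - x^*\|$ and then take the infimum over $x^*$ only of the second summand on the right; the remaining two terms do not involve $x^*$. This yields
\begin{equation*}
  \inf_{x^* \in X^*_\epsilon} \|\hat{x} - x^*\| \;\leq\; \|\bar{x} - \bar{x}_\epsilon\| + \inf_{x^* \in X^*_\epsilon} \|\bar{x}_\epsilon - x^*\| + 2\sqrt{\epsilon/\rho}.
\end{equation*}
Next, since this holds for every $\bar{x} \in \bar{X}_{N,\epsilon}(\xi^N)$ and the LHS is independent of $\bar{x}$, I take the infimum of the first summand over $\bar{x} \in \bar{X}_{N,\epsilon}(\xi^N)$. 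Finally, the RHS is now independent of $\hat{x}$, so taking the supremum over $\hat{x} \in \hat{X}_{N,\rho,\epsilon}(\xi^N)$ on the LHS produces exactly $\Delta(\hat{X}_{N,\rho,\epsilon}(\xi^N), X^*_\epsilon)$ by Definition~\ref{def:pessimistic distance}, giving the desired bound.

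There is no real obstacle here beyond being careful about the order of operations: the argument works precisely because each of the three right-hand summands depends on a distinct free variable ($\bar{x}$, $x^*$, or neither), so the $\inf$ over each variable distributes cleanly onto the single term in which it appears. The only subtlety worth emphasizing in the write-up is that the step ``$\inf_{x^*}$ of a sum equals the sum of $\inf_{x^*}$'' is \emph{not} applied; rather, one term of the sum is genuinely independent of $x^*$ (which is why it passes through the $\inf$ unchanged), and analogously for the $\inf$ over $\bar{x}$ and the $\sup$ over $\hat{x}$.
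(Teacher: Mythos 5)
Your proposal is correct and follows essentially the same route as the paper's proof in Appendix H.6: both start from the three-term bound of Lemma \ref{lemma:compromise:relation} and then pass the $\sup$ over $\hat{x}_{N,\rho,\epsilon}(\xi^N)$ and the two $\inf$'s (over $X_\epsilon^*$ and over $\bar{X}_{N,\epsilon}(\xi^N)$) onto the terms that depend on the respective free variables. The only difference is the order in which these operations are applied (the paper takes the supremum over $\hat{x}_{N,\rho,\epsilon}(\xi^N)$ before the two infima), which is immaterial since each variable appears in exactly one term.
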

\proof 
See Appendix \ref{appendix:proof of lemma:compromise:relation2}. 
\endproof
In the next result, we aim to use Theorem \ref{theorem:lipschitz continuity of solution set} and estimation errors defined in (\ref{eq:estimation error of f}) to bound $\inf\limits_{x \in \bar{X}_{N,\epsilon}(\xi^N) } \| \frac{1}{m} \sum_{j=1}^m x_{n,\epsilon}(\xi^n_j) - x\|$.
\begin{lemma} \label{lemma:compromise:relation3}
    Let $\bar{X}_{N,\epsilon}(\xi^N)$ and $X_{n,\epsilon}(\xi^n_j)$ be defined in (\ref{eq:epsilon-optimal solution set:agg}) and (\ref{eq:epsilon-optimal solution set for each replication}), respectively. Let $\delta_n^f(\cdot)$ be defined in Definition \ref{def:supremum of sampling error of the objective function estimate}. Let $x_{n,\epsilon}(\xi^n_j) \in X_{n,\epsilon}(\xi^n_j)$, $j = 1,2,\ldots,m$. Suppose that Assumptions \ref{assumption:part 1}, \ref{assumption:part 2}, and \ref{assumption:part 2(2)} hold. The following relation holds:
    \begin{equation*}
         \inf_{x \in \bar{X}_{N,\epsilon}(\xi^N)} \left\| \frac{1}{m} \sum_{j=1}^m x_{n,\epsilon}(\xi^n_j) - x \right\| \leq \frac{\frac{1}{m^2} \sum_{\substack{i,j=1 \\ i \neq j}}^m \Big[\delta_{n}^f(\xi^n_j) + \delta^f_{n}(\xi^n_j) \Big]}{ \epsilon} D_X.
    \end{equation*}
\end{lemma}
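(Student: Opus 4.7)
The plan is to exhibit the average point $\bar{x}_{n,\epsilon}(\xi^N) \triangleq \frac{1}{m}\sum_{j=1}^m x_{n,\epsilon}(\xi^n_j)$ as a member of an \emph{enlarged} $\epsilon$-optimal set $\bar{X}_{N,\epsilon'}(\xi^N)$ of the aggregated SAA problem (\ref{eq:aggregated SAA problem}), with a quantifiable inflation $\epsilon' - \epsilon$ coming exactly from the cross-replication errors $\delta^f_n$. The final bound then pops out of Theorem \ref{theorem:lipschitz continuity of solution set}, since by definition the infimum on the left is at most $\Delta(\bar{X}_{N,\epsilon'}(\xi^N), \bar{X}_{N,\epsilon}(\xi^N))$ whenever $\bar{x}_{n,\epsilon}(\xi^N) \in \bar{X}_{N,\epsilon'}(\xi^N)$.

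To execute step one, I would first invoke the convexity of each $f_n(\cdot;\xi^n_i)$ guaranteed by Assumption \ref{assumption:part 2}, yielding
\[
\bar{f}_N(\bar{x}_{n,\epsilon}(\xi^N);\xi^N) \;\leq\; \frac{1}{m^2}\sum_{i=1}^m\sum_{j=1}^m f_n(x_{n,\epsilon}(\xi^n_j);\xi^n_i).
\]
For the diagonal terms $i=j$, the defining inequality of $X_{n,\epsilon}(\xi^n_j)$ in (\ref{eq:epsilon-optimal solution set for each replication}) gives $f_n(x_{n,\epsilon}(\xi^n_j);\xi^n_j)\leq \theta_n(\xi^n_j)+\epsilon$. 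For the off-diagonal terms $i\neq j$, I would insert and subtract $f(x_{n,\epsilon}(\xi^n_j))$ to pass between replications $i$ and $j$, so that two applications of the supremum-error bound (\ref{eq:estimate error}) produce
\[
f_n(x_{n,\epsilon}(\xi^n_j);\xi^n_i) \;\leq\; \theta_n(\xi^n_j) + \epsilon + \delta^f_n(\xi^n_i) + \delta^f_n(\xi^n_j).
\]
Summing and averaging gives $\bar{f}_N(\bar{x}_{n,\epsilon}(\xi^N);\xi^N) \leq \frac{1}{m}\sum_j\theta_n(\xi^n_j) + \epsilon + \frac{1}{m^2}\sum_{i\neq j}[\delta^f_n(\xi^n_i)+\delta^f_n(\xi^n_j)]$.

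Next I would supply the matching lower bound on $\bar{\theta}_N(\xi^N)$ by interchanging min and sum: $\bar{\theta}_N(\xi^N) \geq \frac{1}{m}\sum_j \theta_n(\xi^n_j)$. Subtracting this from the previous display shows that $\bar{x}_{n,\epsilon}(\xi^N) \in \bar{X}_{N,\epsilon'}(\xi^N)$ with
\[
\epsilon' \;\triangleq\; \epsilon + \frac{1}{m^2}\sum_{\substack{i,j=1\\ i\neq j}}^m\bigl[\delta^f_n(\xi^n_i)+\delta^f_n(\xi^n_j)\bigr].
\]
Finally, applying Theorem \ref{theorem:lipschitz continuity of solution set} to the convex function $\bar{f}_N(\cdot;\xi^N)$ on the compact convex set $X$ (whose diameter is bounded by $D_X$) yields $\Delta(\bar{X}_{N,\epsilon'}(\xi^N),\bar{X}_{N,\epsilon}(\xi^N)) \leq \frac{\epsilon'-\epsilon}{\epsilon} D_X$, and the desired conclusion follows since $\inf_{x\in \bar{X}_{N,\epsilon}(\xi^N)}\|\bar{x}_{n,\epsilon}(\xi^N)-x\| \leq \Delta(\bar{X}_{N,\epsilon'}(\xi^N),\bar{X}_{N,\epsilon}(\xi^N))$.

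The main obstacle I anticipate is the off-diagonal bookkeeping: ensuring that the $\epsilon$ slack absorbed in each replication's $\epsilon$-optimality cleanly combines with the cross-replication function discrepancies $|f_n(\cdot;\xi^n_i) - f_n(\cdot;\xi^n_j)|$ so that the final inflation $\epsilon'-\epsilon$ involves \emph{only} the $\delta^f_n$ sums (no residual $\epsilon$ term) and divides by the correct $\epsilon$ in the denominator. Everything else—convexity, triangle inequality, and the Lipschitzian behavior of $\epsilon$-solution sets—is standard once the argument is set up correctly.
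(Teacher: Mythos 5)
Your proposal is correct and follows essentially the same route as the paper's proof: convexity to pass to the double sum, the bound $f_n(x_{n,\epsilon}(\xi^n_j);\xi^n_i)\leq\theta_n(\xi^n_j)+\epsilon+\delta^f_n(\xi^n_i)+\delta^f_n(\xi^n_j)$ for $i\neq j$, membership of the average in $\bar{X}_{N,\epsilon'}(\xi^N)$, and then Theorem \ref{theorem:lipschitz continuity of solution set}. The only cosmetic difference is that you invoke the min--sum interchange $\bar{\theta}_N(\xi^N)\geq\frac{1}{m}\sum_j\theta_n(\xi^n_j)$ directly, whereas the paper routes the same inequality through the aggregated minimizer $\bar{x}_N(\xi^N)$; these are equivalent.
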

\proof
We first provide a proof sketch, which consists of two steps. In Step 1, we show that $\frac{1}{m} \sum_{j=1}^m x_{n,\epsilon}(\xi^n_j)$ is an $\epsilon'$-optimal solution of the problem in (\ref{eq:aggregated SAA problem}), where $\epsilon' = \epsilon + \frac{1}{m^2}\sum_{\substack{i,j=1 \\ i \neq j}}^m (\delta_{n}(\xi^n_i) + \delta_{n}(\xi^n_j))$. The key part of the proof is to derive the following relation, 
$$
f_{n} (x_{n,\epsilon}(\xi^n_j);\xi^n_i) \leq \theta_{n}(\xi^n_j) + \epsilon + \delta_{n}(\xi^n_i) + \delta_{n}(\xi^n_j),
$$
where $\theta_{n}(\xi^n_j)$ is defined in (\ref{eq:optimal cost of one replication of SAA}). Note that if we evaluate $f_{n} (x;\xi^n_i)$ at the point $x_{n,\epsilon}(\xi^n_j)$, it should depend on three factors: (1) the minimum of $f_{n} (x;\xi^n_i)$ over $X$; (2) the systematic error $\epsilon$; (3) the sampling errors from $f_{n} (x;\xi^n_i)$ and $f_{n} (x;\xi^n_j)$. In Step 2 of the proof, we can apply Theorem \ref{theorem:lipschitz continuity of solution set}
to derive the upper bound on $\inf_{x \in \bar{X}_{N,\epsilon}(\xi^N)} \left\| \frac{1}{m} \sum_{j=1}^m x_{n,\epsilon}(\xi^n_j) - x \right\|$. \\
\noindent (\textbf{Step 1: To show $\sum_{j=1}^m x_{n,\epsilon}(\xi^n_j)$ is an $\epsilon'$-optimal solution of the problem in (\ref{eq:aggregated SAA problem})}). We let $\bar{x}_N(\xi^N) \in \arg \min_{x \in X} \bar{f}_N(x;\xi^N)$, where $\bar{f}_N(x;\xi^N)$ is defined in (\ref{eq:aggregated SAA objective function}). First, by convexity assumption in Assumption \ref{assumption:part 2}, we observe that 
\begin{equation} \label{eq:lemma:compromise:relation3:01}
    \begin{aligned}
        &\frac{1}{m} \sum_{i=1}^m f_{n} \left(\frac{1}{m} \sum_{j=1}^m x_{n,\epsilon}(\xi^n_j);\xi^n_i \right) \\
        &\leq \frac{1}{m^2} \sum_{i=1}^m \sum_{j=1}^m f_{n}(x_{n,\epsilon}(\xi^n_j);\xi^n_i) \quad \text{by convexity in Assumption \ref{assumption:part 2}} \\
        &= \frac{1}{m^2} \sum_{i=1}^m \sum_{j=1}^m \left[f_{n} (\bar{x}_N(\xi^N);\xi^n_j) + f_{n}^i (x_{n,\epsilon}(\xi^n_j);\xi^n_i) - f_{n} (\bar{x}_N(\xi^N);\xi^n_j) \right].
    \end{aligned}
\end{equation}
Similar to the derivation in (\ref{eq:complexity:cost2}) from Theorem \ref{thm:sample complexity cost of compromise decision problem}, for $i \neq j$, we have
\begin{equation} \label{eq:lemma:compromise:relation3:02}
\begin{aligned}
    f_{n} (x_{n,\epsilon}(\xi^n_j);\xi^n_i) &= f_{n} (x_{n,\epsilon}(\xi^n_j);\xi^n_j) + f_{n}(x_{n,\epsilon}(\xi^n_j);\xi^n_i) - f_{n}(x_{n,\epsilon}(\xi^n_j);\xi^n_j) \\
    &\leq \theta_{n}(\xi^n_j) + \epsilon + |f_{n} (x_{n,\epsilon}(\xi^n_j);\xi^n_i) - f_{n}(x_{n,\epsilon}(\xi^n_j);\xi^n_j)| \\
    &\leq \theta_{n}(\xi^n_j) + \epsilon + \sup_{x \in X} |f_{n} (x;\xi^n_i) - f(x)| + \sup_{x \in X} |f_{n} (x;\xi^n_j) - f(x)| \\
    &= \theta_{n}(\xi^n_j) + \epsilon + \delta^f_{n}(\xi^n_i) + \delta^f_{n}(\xi^n_j) .
\end{aligned}
\end{equation}
The combination of (\ref{eq:lemma:compromise:relation3:01}) and (\ref{eq:lemma:compromise:relation3:02}) implies that 
\begin{equation} \label{eq:epsilon-optimal solution of SAA:proof01}
    \begin{aligned}
        \frac{1}{m} \sum_{i=1}^m f_{n} \left(\frac{1}{m} \sum_{j=1}^m x_{n,\epsilon}(\xi^n_j);\xi^n_i \right) &\leq \frac{1}{m^2} \sum_{i=1}^m \sum_{j=1}^m \Bigg[f_{n}\left(\bar{x}_N(\xi^N);\xi^n_j \right) + \theta_{n}(\xi^n_j) + \epsilon \\
        & \quad + \big(\delta_{n}^f(\xi^n_i) + \delta_{n}^f(\xi^n_j)\big) \mathbb{I}(i \neq j) - f_{n}(\bar{x}_N(\xi^N);\xi^n_j) \Bigg] \\
        &= \theta_N(\xi^N) + \epsilon + \frac{1}{m^2} \sum_{j=1}^m \Big[\theta_{n}(\xi^n_j) - f_{n}\Big(\bar{x}_N(\xi^N);\xi^n_j\Big)\Big] \\
        & \quad + \frac{1}{m^2} \sum_{\substack{i,j=1 \\ i \neq j}}^m \Big[\delta^f_{n}(\xi^n_i) + \delta^f_{n}(\xi^n_j)\Big] \\
        &\leq \theta_N(\xi^N) + \epsilon + \frac{1}{m^2} \sum_{\substack{i,j=1 \\ i \neq j}}^m \Big[\delta^f_{n}(\xi^n_i) + \delta^f_{n}(\xi^n_j)\Big] .
    \end{aligned}
\end{equation}
Hence, equation (\ref{eq:epsilon-optimal solution of SAA:proof01}) shows that $\sum_{j=1}^m x_{n,\epsilon}(\xi^n_j)$ is the $\epsilon'$-optimal solution of the problem $\min_{x \in X} \bar{f}_N(x;\xi^N)$, where $\epsilon' = \epsilon + \frac{1}{m^2} \sum_{\substack{i,j=1 \\ i \neq j}}^m [\delta^f_{n}(\xi^n_i) + \delta^f_{n}(\xi^n_j)]$. \\
\noindent (\textbf{Step 2: To derive upper bound of $ \inf_{x \in \bar{X}_{N,\epsilon}} \| \frac{1}{m} \sum_{j=1}^m x_{n,\epsilon}(\xi^n_j) - x \|$ using Theorem \ref{theorem:lipschitz continuity of solution set}})
Let $\epsilon' = \epsilon + \frac{1}{m^2}\sum_{\substack{i,j=1 \\ i \neq j}}^m (\delta^f_{n}(\xi^n_i) + \delta^f_{n}(\xi^n_j))$, then $\frac{1}{m} \sum_{j=1}^m x_{n,\epsilon}(\xi^n_j) \in \bar{X}_{N,\epsilon'}(\xi^N)$. Hence, by Theorem \ref{theorem:lipschitz continuity of solution set}, we have 
\begin{equation}
\begin{aligned}
    \inf_{x \in \bar{X}_{N,\epsilon}} \| \frac{1}{m} \sum_{j=1}^m x_{n,\epsilon}(\xi^n_j) - x \| &\leq \Delta(\bar{X}_{N,\epsilon'}(\xi^N), \bar{X}_{N,\epsilon}(\xi^N)) \\
    &\leq \frac{\epsilon' - \epsilon}{\epsilon} D_X \leq \frac{\frac{1}{m^2} \sum_{\substack{i,j=1 \\ i \neq j}}^m (\delta^f_{n}(\xi^n(\xi^n_i) + \delta^f_{n}(\xi^n_j))}{ \epsilon} D_X. 
\end{aligned}
\end{equation}
\endproof
Similar to the proof of Lemma \ref{lemma:compromise:relation3}, we will apply Theorem \ref{theorem:lipschitz continuity of solution set} to obtain the upper bound of $\inf\limits_{x \in X^*_{\epsilon}} \| \frac{1}{m} \sum_{j=1}^m x^j_{n,\epsilon} - x \|$.  
\begin{lemma} \label{lemma:compromise:relation4}
    Let $X_{n,\epsilon}(\xi^n_j)$ and $X_{\epsilon}^*$ be defined in (\ref{eq:epsilon-optimal solution set for each replication}) and (\ref{eq:epsilon optimal solution set:true problem}), respectively. Let $x_{n,\epsilon}(\xi^n_j) \in X_{n,\epsilon}(\xi^n_j)$, $j = 1,2,\ldots,m$.  Let $\delta_n^f(\cdot)$ be defined in Definition \ref{def:supremum of sampling error of the objective function estimate}. Suppose that Assumptions \ref{assumption:part 1}, \ref{assumption:part 2}, and \ref{assumption:part 2(2)} hold. Then the following relation holds:
    \begin{equation*}
         \inf_{x \in X^*_{\epsilon}} \| \frac{1}{m} \sum_{j=1}^m x_{n,\epsilon}(\xi^n_j) - x \| \leq \frac{\frac{2}{m} \sum_{j=1}^m \delta^f_{n}(\xi^n_j)}{\epsilon} D_X.
    \end{equation*}
\end{lemma}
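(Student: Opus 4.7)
The plan is to mirror the two-step strategy used in the proof of Lemma \ref{lemma:compromise:relation3}, but now applied to the \emph{true} objective $f$ rather than to the aggregated SAA objective $\bar f_N$. Concretely, I would first establish that the candidate point $\bar y := \frac{1}{m}\sum_{j=1}^m x_{n,\epsilon}(\xi^n_j)$ lies in $X^*_{\epsilon'}$ for a suitable inflated tolerance $\epsilon'$, and then invoke Theorem \ref{theorem:lipschitz continuity of solution set} on the pair $(X^*_{\epsilon'}, X^*_\epsilon)$ to bound the infimum distance from $\bar y$ to $X^*_\epsilon$.

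For Step 1, I would use convexity of $f$ (which follows from Assumption \ref{assumption:part 2}) to write
\[
f(\bar y) \;\leq\; \frac{1}{m}\sum_{j=1}^m f(x_{n,\epsilon}(\xi^n_j)).
\]
For each term I would insert $f_n(\,\cdot\,;\xi^n_j)$ via a sandwich argument: (a) $f(x_{n,\epsilon}(\xi^n_j)) \leq f_n(x_{n,\epsilon}(\xi^n_j);\xi^n_j) + \delta^f_n(\xi^n_j)$ by Definition \ref{def:supremum of sampling error of the objective function estimate}; (b) $f_n(x_{n,\epsilon}(\xi^n_j);\xi^n_j) \leq \theta_n(\xi^n_j) + \epsilon$ by the definition of an $\epsilon$-optimal solution in \eqref{eq:epsilon-optimal solution set for each replication}; and (c) $\theta_n(\xi^n_j) \leq f_n(x^*;\xi^n_j) \leq f(x^*) + \delta^f_n(\xi^n_j) = \theta^* + \delta^f_n(\xi^n_j)$ where $x^* \in \arg\min_{x \in X} f(x)$. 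Chaining these inequalities yields $f(x_{n,\epsilon}(\xi^n_j)) \leq \theta^* + \epsilon + 2\delta^f_n(\xi^n_j)$, and averaging over $j$ gives
\[
f(\bar y) \;\leq\; \theta^* + \epsilon + \frac{2}{m}\sum_{j=1}^m \delta^f_n(\xi^n_j),
\]
so $\bar y \in X^*_{\epsilon'}$ with $\epsilon' := \epsilon + \frac{2}{m}\sum_{j=1}^m \delta^f_n(\xi^n_j)$.

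For Step 2, since $\bar y \in X^*_{\epsilon'}$ and $\epsilon' \geq \epsilon$, Theorem \ref{theorem:lipschitz continuity of solution set} (applied to $f$, which is a convex lower semicontinuous function on the compact convex $X$) gives
\[
\inf_{x \in X^*_\epsilon}\,\|\bar y - x\| \;\leq\; \Delta(X^*_{\epsilon'}, X^*_\epsilon) \;\leq\; \frac{\epsilon' - \epsilon}{\epsilon}\,D_X \;=\; \frac{\tfrac{2}{m}\sum_{j=1}^m \delta^f_n(\xi^n_j)}{\epsilon}\,D_X,
\]
which is precisely the claimed bound. I do not anticipate a genuinely hard step here: the main subtlety is just bookkeeping the two sources of error when passing from $x_{n,\epsilon}(\xi^n_j)$ evaluated under $f_n(\,\cdot\,;\xi^n_j)$ to an estimate in terms of the true optimum $\theta^*$. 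Unlike Lemma \ref{lemma:compromise:relation3}, cross terms indexed by $i\ne j$ do not appear, because the target problem $\min_{x \in X} f(x)$ does not depend on the sample, so only a \emph{single} $\delta^f_n(\xi^n_j)$ sampling-error term arises per replication in each direction of the sandwich, accounting for the factor of $2$ (and not a double sum) in the final bound.
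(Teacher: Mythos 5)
Your proposal is correct and follows essentially the same route as the paper's proof in Appendix \ref{appendix:proof of lemma:compromise:relation4}: convexity of $f$ to pass to the average, the same sandwich combining the $\epsilon$-optimality of $x_{n,\epsilon}(\xi^n_j)$ with two applications of $\delta^f_n(\xi^n_j)$ (one of which is exactly the bound $\theta_n(\xi^n_j)-\theta^*\le\delta^f_n(\xi^n_j)$ from \eqref{eq:complexity:cost2:02}), and then Theorem \ref{theorem:lipschitz continuity of solution set} applied to $(X^*_{\epsilon'},X^*_\epsilon)$. Your closing remark about why no cross terms $i\ne j$ appear here, unlike in Lemma \ref{lemma:compromise:relation3}, is also accurate.
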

\proof
The proof strategy is similar to Lemma \ref{lemma:compromise:relation3}. See Appendix \ref{appendix:proof of lemma:compromise:relation4}.
\endproof
With the bounds obtained in Lemmas \ref{lemma:compromise:relation2} - \ref{lemma:compromise:relation4}, we now present the finite-sample complexity of the compromise decisions in the following theorem. 
\begin{theorem} \label{theorem:complexity of epsilon optimal solution}
    Suppose that Assumptions \ref{assumption:part 1}, \ref{assumption:part 2}, and \ref{assumption:part 2(2)} hold. Let $\hat{X}_{N,\rho,\epsilon}(\xi^N)$ be defined in (\ref{notations:inexact compromise decicisions}). Let $\lambda \in (0, \frac{1}{2})$ and $N_F$ be a constant defined in Lemma \ref{lemma:rademacher complexity of objective:holder continuity}. Let $\delta_n^f(\cdot)$ be defined in Definition \ref{def:supremum of sampling error of the objective function estimate}.
    Then the following hold:
    \begin{enumerate}
        \item[1.] \begin{equation} \label{eq:proof:theorem:complexity of epsilon optimal solution:01}
        \Delta(\hat{X}_{N,\rho,\epsilon}(\xi^N), X_{\epsilon}^*) \leq \frac{\frac{2}{m} \sum_{j=1}^m \delta^f_{n}(\xi^n_j) + \frac{1}{m^2} \sum_{\substack{i,j=1 \\ i \neq j}}^m \Big[\delta^f_{n}(\xi^n_i) + \delta^f_{n}(\xi^n_j)\Big]}{\epsilon} D_X + 2 \sqrt{\epsilon / \rho}. 
    \end{equation}
    \item[2.] 
    \begin{equation*}
        \mathbb{E}[\Delta(\hat{X}_{N,\rho,\epsilon}(\tilde{\xi}^N), X_{\epsilon}^*)] \leq \frac{D_X N_F}{\epsilon}\frac{8m - 4}{m n^\lambda} + 2 \sqrt{\epsilon/\rho}.
    \end{equation*}
    \item[3.] Further let $\rho = n$ and $C = \frac{\epsilon^{\frac{3}{2}}}{D_X} + \frac{(4m - 2) N_F}{m}$, then we have
    \begin{equation*}
        \Pr \left\{\frac{\epsilon n^\lambda}{2 D_X} \Delta(\hat{X}_{N,\rho,\epsilon}(\tilde{\xi}^N), X_{\epsilon}^*) \geq C + t \right\} \leq m \exp \left\{-\frac{m^2 t^2}{2M^2_F(2m - 1)^2} \right\} .
    \end{equation*}
    \end{enumerate}
\end{theorem}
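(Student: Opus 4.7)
The proof will proceed part by part, with each part building directly on the machinery already developed.

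\textbf{Part 1 (deterministic bound).} The plan is to simply compose Lemmas \ref{lemma:compromise:relation2}, \ref{lemma:compromise:relation3}, and \ref{lemma:compromise:relation4}. Lemma \ref{lemma:compromise:relation2} already decomposes $\Delta(\hat{X}_{N,\rho,\epsilon}(\xi^N), X_\epsilon^*)$ into three pieces: a distance from the averaged replication solution to $\bar{X}_{N,\epsilon}(\xi^N)$, a distance from that same average to $X_\epsilon^*$, and the additive $2\sqrt{\epsilon/\rho}$ slack. Lemma \ref{lemma:compromise:relation3} controls the first piece by $\frac{D_X}{\epsilon m^2}\sum_{i\neq j}[\delta^f_n(\xi^n_i) + \delta^f_n(\xi^n_j)]$, and Lemma \ref{lemma:compromise:relation4} controls the second by $\frac{2 D_X}{\epsilon m}\sum_{j} \delta^f_n(\xi^n_j)$. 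Summing yields (\ref{eq:proof:theorem:complexity of epsilon optimal solution:01}) directly.

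\textbf{Part 2 (expectation bound).} I would take expectation of both sides of Part 1 and invoke Theorem \ref{theorem:bound of function estimation error}, which gives $\mathbb{E}[\delta^f_n(\tilde{\xi}^n_j)] \leq 2 R_n(F,\Xi) \leq 2 N_F/n^\lambda$ for each replication $j$. A careful coefficient count is the only non-routine part: aggregating the contributions, each $\delta^f_n(\tilde{\xi}^n_k)$ appears with total coefficient $\frac{2}{m} + \frac{2(m-1)}{m^2} = \frac{4m-2}{m^2}$, so the full sum becomes $\frac{4m-2}{m^2}\sum_{k=1}^m \delta^f_n(\tilde{\xi}^n_k)$. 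Taking expectation contributes $m \cdot \frac{4m-2}{m^2} \cdot \frac{2 N_F}{n^\lambda} = \frac{(8m-4)N_F}{m n^\lambda}$. Multiplying by $D_X/\epsilon$ and adding $2\sqrt{\epsilon/\rho}$ recovers the claimed bound.

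\textbf{Part 3 (concentration bound).} Specializing $\rho = n$, I first rewrite the deterministic bound (\ref{eq:proof:theorem:complexity of epsilon optimal solution:01}) in the compact form
\[
\Delta(\hat{X}_{N,\rho,\epsilon}(\tilde{\xi}^N), X_{\epsilon}^*) \;\leq\; \frac{D_X}{\epsilon}\cdot\frac{4m-2}{m^2}\sum_{k=1}^{m} \delta^f_n(\tilde{\xi}^n_k) \;+\; 2\sqrt{\epsilon/n},
\]
using the coefficient count from Part 2. Multiplying by $\frac{\epsilon n^\lambda}{2 D_X}$ and noting that $\lambda < 1/2$ implies $n^{\lambda - 1/2} \leq 1$, the additive term $2\sqrt{\epsilon/n}\cdot\frac{\epsilon n^\lambda}{2 D_X}$ is bounded by $\epsilon^{3/2}/D_X$. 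The remaining random part is $\frac{4m-2}{2m^2}\sum_{k=1}^m n^\lambda \delta^f_n(\tilde{\xi}^n_k)$. To bound this by $\frac{(4m-2)N_F}{m} + t$, it suffices that each term satisfies $n^\lambda \delta^f_n(\tilde{\xi}^n_k) \leq 2 N_F + t'$ with $t' = mt/(2m-1)$, since then the sum is bounded by $\frac{4m-2}{2m^2}\cdot m(2N_F + t') = \frac{(4m-2)N_F}{m} + t$. Applying the concentration inequality from Theorem \ref{theorem:bound of function estimation error}(2) with this choice of $t'$ to each replication, followed by a union bound over the $m$ replications, yields
\[
\Pr\!\left\{\frac{\epsilon n^\lambda}{2 D_X}\Delta \geq C + t\right\} \;\leq\; m\,\exp\!\left(-\frac{(t')^2}{2 M_F^2}\right) \;=\; m\,\exp\!\left(-\frac{m^2 t^2}{2 M_F^2 (2m-1)^2}\right).
\]

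The main obstacle I anticipate is the bookkeeping in Part 3, specifically the rescaling of the deviation $t$ into a per-replication deviation $t'$ in a way that makes the union bound tight, and verifying that the $\lambda < 1/2$ condition genuinely absorbs the $2\sqrt{\epsilon/n}$ slack into the clean constant $\epsilon^{3/2}/D_X$. Parts 1 and 2 should be essentially mechanical given the earlier lemmas.
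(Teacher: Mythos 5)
Your proposal is correct and follows essentially the same route as the paper: Part 1 by composing Lemmas \ref{lemma:compromise:relation2}--\ref{lemma:compromise:relation4}, Part 2 by taking expectations via Theorem \ref{theorem:bound of function estimation error}.1 (your per-replication coefficient $\frac{4m-2}{m^2}$ reproduces the paper's $\frac{8m-4}{m}R_n(F,\Xi)$ bound), and Part 3 by absorbing $2\sqrt{\epsilon/n}$ into $\epsilon^{3/2}/D_X$ using $n^{\lambda-1/2}\leq 1$ and then applying the same union bound with the per-replication deviation $t'=mt/(2m-1)$. No gaps.
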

\proof
As with some of the previous results, we begin with an overview of the proof. Part 1 (\ref{eq:proof:theorem:complexity of epsilon optimal solution:01}) of the theorem follows by directly combining the results of Lemmas \ref{lemma:compromise:relation2}, \ref{lemma:compromise:relation3}, and \ref{lemma:compromise:relation4}. Hence, we will skip the proof of Part 1. Part 2 of the theorem follows by taking the expectation of both sides of (\ref{eq:proof:theorem:complexity of epsilon optimal solution:01}) and then applying Theorem \ref{theorem:bound of function estimation error}.1. Part 3 of the theorem follows by applying Theorem \ref{theorem:bound of function estimation error}.2 into (\ref{eq:proof:theorem:complexity of epsilon optimal solution:01}). \\
\noindent \textbf{(Proof of Part 2)} By Theorem \ref{theorem:bound of function estimation error}, note that $\mathbb{E}[\delta^f_n(\tilde{\xi}^n_i)] \leq 2R_n(F,\Xi)$ for $i = 1,2,\ldots,m$. By taking the expectation of both sides of (\ref{eq:proof:theorem:complexity of epsilon optimal solution:01}) and by the i.i.d assumption in Assumption \ref{assumption:part 2(2)}, we have 
\begin{equation*}
    \begin{aligned}
        \mathbb{E}[\Delta(\hat{X}_{N,\rho,\epsilon}(\tilde{\xi}^N), X_{\epsilon}^*)] &\leq \frac{4 R_n(F,\Xi) + \frac{4m^2 - 4m}{m^2} R_n(F,\Xi)}{\epsilon} + 2 \sqrt{\epsilon/\rho} \\
        &\leq \frac{D_X N_F}{\epsilon}\frac{8m - 4}{m n^\lambda} + 2 \sqrt{\epsilon/\rho} && \text{by Theorem \ref{theorem:bound of function estimation error}.} 
    \end{aligned}
\end{equation*}
\noindent \textbf{(Proof of Part 3)} Equation (\ref{eq:proof:theorem:complexity of epsilon optimal solution:01}) implies that 
\begin{equation} \label{eq:theorem:complexity of epsilon optimal solution:01}
    \begin{aligned}
        &\Pr \left\{\frac{\epsilon n^\lambda}{2 D_X} \Delta(\hat{X}_{N,\rho,\epsilon}(\tilde{\xi}^N), X_{\epsilon}^*) \geq C + t \right\} \\
        &\leq \Pr \left\{ \frac{n^\lambda}{m} \sum_{j=1}^m \delta^f_{n}(\tilde{\xi}^n_j) + \frac{n^\lambda}{2m^2} \sum_{\substack{i,j=1 \\ i \neq j}}^m \Big[\delta^f_{n}(\tilde{\xi}^n_i) + \delta^f_{n}(\tilde{\xi}^n_j)\Big] + \frac{\epsilon^{\frac{3}{2}} n^\lambda }{D_Xn^\frac{1}{2}} \geq C + t \right\} \\
        &\leq \Pr \left\{ \frac{n^\lambda}{m} \sum_{j=1}^m \delta^f_{n}(\tilde{\xi}^n_j) + \frac{n^\lambda}{2 m^2} \sum_{\substack{i,j=1 \\ i \neq j}}^m \Big[\delta^f_{n}(\tilde{\xi}^n_i) + \delta^f_{n}(\tilde{\xi}^n_j)\Big] + \frac{\epsilon^{\frac{3}{2}}}{D_X} \geq C + t \right\} \ \text{because } \frac{n^\lambda}{n^\frac{1}{2}} \leq 1 \\
        &\leq \Pr \left\{ \sum_{i=1}^m \frac{n^\lambda(2m - 1)}{m^2} \delta^f_n(\tilde{\xi}^n_i) \geq \frac{(4m - 2) N_F}{m} + t\right\} 
    \end{aligned}
\end{equation}
Given two random variables $X_1, X_2$ and constants $\epsilon$, we have $\Pr(X_1 + X_2 \geq \epsilon) \leq \Pr (X_1 \geq \frac{\epsilon}{2}) + \Pr(X_2 \geq \frac{\epsilon}{2})$ (see \cite[equation (5)]{shapiro2006complexity} for a similar argument). Hence it follows from (\ref{eq:theorem:complexity of epsilon optimal solution:01}) that 
\begin{equation}
    \begin{aligned}
        &\Pr \left\{ \sum_{i=1}^m \frac{n^\lambda(2m - 1)}{m^2} \delta^f_n(\tilde{\xi}^n_i) \geq \frac{(4m - 2) N_F}{m} + t\right\} \\
        &\leq \sum_{i=1}^m \Pr \left\{\frac{n^\lambda(2m - 1)}{m^2} \delta^f_n(\tilde{\xi}^n_i) \geq \frac{(4m - 2)  N_F}{m^2} + \frac{t}{m} \right\}\\
        &= \sum_{i=1}^m \Pr \left\{n^\lambda \delta^f_n(\tilde{\xi}^n_i) \geq  2N_F + \frac{m t}{2m - 1} \right\} \\
        &\leq m \exp \left\{-\frac{m^2 t^2}{2M^2_F(2m - 1)^2} \right\} \quad \text{by Theorem \ref{theorem:bound of function estimation error}.2 }. 
    \end{aligned}
\end{equation}
\endproof

Finally, we close this section by deriving the upper bound on the variance of $\Delta(\hat{X}_{N,\rho,\epsilon}(\xi^N), X_{\epsilon}^*)$ (i.e., the largest distance from the $\epsilon$-optimal solution set of the compromise decision problem to the $\epsilon$-optimal solution set of the true problem). 

\begin{theorem} \label{thm:variance:saa}
    Suppose that Assumptions \ref{assumption:part 1}, \ref{assumption:part 2}, and \ref{assumption:part 2(2)} hold. Let $\hat{X}_{N,\rho,\epsilon}(\xi^N)$ and $X^*_{\epsilon}$ be defined in (\ref{notations:inexact compromise decicisions}) and (\ref{eq:epsilon optimal solution set:true problem}), respectively. Let $\lambda \in (0, \frac{1}{2})$ and $N_F$ be a constant defined in Lemma \ref{lemma:rademacher complexity of objective:holder continuity}. 
    Then the following holds:
    \begin{equation*}
        \text{Var}[\Delta(\hat{X}_{N,\rho,\epsilon}(\tilde{\xi}^N), X_{\epsilon}^*)] \leq \frac{64 D_X^2 M_F N_F}{m \epsilon^2 n^\lambda} + \left(\frac{8D_X N_F}{\epsilon n^\lambda} + 2 \sqrt{\epsilon / \rho} \right)^2.
    \end{equation*}
    Furthermore, let $K$ be a positive constant. Letting $\rho \geq K n$, we have $\mathbb{E}[\Delta(\hat{X}_{N,\rho,\epsilon}(\tilde{\xi}^N), X_{\epsilon}^*)] = O(\frac{1}{n^{\lambda}})$ and $\text{Var}[\Delta(\hat{X}_{N,\rho,\epsilon}(\tilde{\xi}^N), X_{\epsilon}^*)] = O(\frac{1}{mn^\lambda} + \frac{1}{n^{2\lambda}})$. 
\end{theorem}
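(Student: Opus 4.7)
The plan is to build directly on Part 1 of Theorem \ref{theorem:complexity of epsilon optimal solution}, which already provides a pointwise upper bound on $\Delta(\hat{X}_{N,\rho,\epsilon}(\tilde{\xi}^N), X_\epsilon^*)$ in terms of the per-replication sampling errors $\delta^f_n(\tilde{\xi}^n_j)$ plus the deterministic penalty $2\sqrt{\epsilon/\rho}$. Since $\Delta$ is non-negative and sandwiched between $0$ and that bound, I can invoke the variance comparison relation \eqref{eq:common bound on the variance} (used already in the proof of Theorem \ref{thm:sample complexity cost of compromise decision problem}): if $0 \leq X \leq Y$ a.s., then $\mathrm{Var}[X] \leq \mathrm{Var}[Y] + (\mathbb{E}[Y])^2$. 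The work then reduces to computing $\mathrm{Var}[Y]$ and $\mathbb{E}[Y]$ for the explicit upper bound $Y$.

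First I would collapse the combinatorial sums in \eqref{eq:proof:theorem:complexity of epsilon optimal solution:01}. The cross-sum $\sum_{i\neq j}[\delta^f_n(\xi^n_i)+\delta^f_n(\xi^n_j)]$ equals $2(m-1)\sum_{k=1}^m \delta^f_n(\xi^n_k)$, so the bound rewrites as
\begin{equation*}
\Delta(\hat{X}_{N,\rho,\epsilon}(\xi^N), X_\epsilon^*) \;\leq\; \frac{D_X(4m-2)}{\epsilon\, m^2} \sum_{k=1}^m \delta^f_n(\xi^n_k) \;+\; 2\sqrt{\epsilon/\rho} \;=:\; Y.
\end{equation*}
Call this upper bound $Y$. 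Next, using Assumption \ref{assumption:part 2(2)}, the samples $\tilde{\xi}^n_1,\dots,\tilde{\xi}^n_m$ are independent, hence so are the $\delta^f_n(\tilde{\xi}^n_k)$. Therefore
\begin{equation*}
\mathrm{Var}[Y] \;=\; \Bigl(\tfrac{D_X(4m-2)}{\epsilon\, m^2}\Bigr)^{\!2} \sum_{k=1}^m \mathrm{Var}[\delta^f_n(\tilde{\xi}^n_k)].
\end{equation*}
Inserting the variance bound $\mathrm{Var}[\delta^f_n(\tilde{\xi}^n)] \leq 4 M_F N_F / n^\lambda$ from \eqref{eq:bound of the varaince} and using the crude simplification $(4m-2)^2 \leq 16 m^2$ gives $\mathrm{Var}[Y] \leq 64 D_X^2 M_F N_F / (m\epsilon^2 n^\lambda)$. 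For the mean, Theorem \ref{theorem:bound of function estimation error}.1 yields $\mathbb{E}[\delta^f_n(\tilde{\xi}^n_k)] \leq 2N_F/n^\lambda$, and $(4m-2)/m \leq 4$ gives $\mathbb{E}[Y] \leq 8 D_X N_F/(\epsilon n^\lambda) + 2\sqrt{\epsilon/\rho}$. Squaring and adding delivers exactly the claimed bound.

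For the asymptotic claim, set $\rho \geq K n$ so $2\sqrt{\epsilon/\rho} = O(n^{-1/2})$. Since $\lambda \in (0,\tfrac12)$, the term $n^{-1/2}$ is dominated by $n^{-\lambda}$, so $\mathbb{E}[Y] = O(n^{-\lambda})$ (and hence $\mathbb{E}[\Delta] = O(n^{-\lambda})$ via Theorem \ref{theorem:complexity of epsilon optimal solution}.2 or monotonicity of expectation), while $(\mathbb{E}[Y])^2 = O(n^{-2\lambda})$ and $\mathrm{Var}[Y] = O(1/(m n^\lambda))$, giving $\mathrm{Var}[\Delta] = O(1/(m n^\lambda) + 1/n^{2\lambda})$.

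There is no serious obstacle here: the statement is essentially a bookkeeping consequence of previously proved results. The only subtle point is the use of the variance comparison inequality \eqref{eq:common bound on the variance}, which requires the stochastic upper bound $Y$ to be non-negative (evident here since both summands are non-negative) and is what forces the appearance of the squared-expectation term $(8D_X N_F/(\epsilon n^\lambda) + 2\sqrt{\epsilon/\rho})^2$ in the final bound rather than a pure variance of $Y$. One should also flag that the independence structure across replications in Assumption \ref{assumption:part 2(2)} is essential for the $1/m$ factor in the variance term; without it one would only get $1/n^\lambda$ in that term.
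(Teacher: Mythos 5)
Your proposal is correct and follows essentially the same route as the paper's proof: collapse the bound from Theorem \ref{theorem:complexity of epsilon optimal solution}.1 into a single sum of i.i.d. terms $\delta^f_n(\tilde{\xi}^n_k)$ plus the constant $2\sqrt{\epsilon/\rho}$, use independence from Assumption \ref{assumption:part 2(2)} together with the variance bound (\ref{eq:bound of the varaince}) and Theorem \ref{theorem:bound of function estimation error}.1, and finish with the comparison relation (\ref{eq:common bound on the variance}). The only cosmetic difference is that you carry the sharper coefficient $(4m-2)/m^2$ before rounding to $4/m$, whereas the paper rounds at the outset; the constants and the asymptotic conclusions are identical.
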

\begin{proof}
Here is a proof sketch: we use the relation in Theorem \ref{theorem:complexity of epsilon optimal solution}.1, relation in (\ref{eq:common bound on the variance}), and, bound of variance of estimation error in (\ref{eq:bound of the varaince}) to derive the upper bound of $\text{Var}[\Delta(\hat{X}_{N,\rho,\epsilon}(\tilde{\xi}^N), X_{\epsilon}^*)]$. In particular, we make use of the assumption that the sample set in replication $i$ is independent of the sample set in replication $j$ (given $i \neq j$) to decompose the variance. \\
\indent Next, we shall give the details of the proof. It follows from Theorem \ref{theorem:complexity of epsilon optimal solution} that 
\begin{equation} \label{eq:thm:variance:saa:01}
    \begin{aligned}
        \Delta(\hat{X}_{N,\rho,\epsilon}(\xi^N), X_{\epsilon}^*) &\leq \frac{\frac{2}{m} \sum_{j=1}^m \delta^f_{n}(\xi^n_j) + \frac{1}{m^2} \sum_{\substack{i,j=1 \\ i \neq j}}^m [\delta^f_{n}(\xi^n_i) + \delta^f_{n}(\xi^n_j)]}{\epsilon} D_X + 2 \sqrt{\epsilon / \rho} \\
        &\leq  \sum_{i=1}^m \frac{4 D_X}{m\epsilon} \delta^f_n(\xi_j^n) + 2 \sqrt{\epsilon / \rho}.
    \end{aligned}
\end{equation}
According to (\ref{eq:bound of the varaince}), we have $\text{Var}[\delta^f_n(\tilde{\xi}^n)] \leq \frac{4 M_F N_F}{n^{\lambda}}$. Hence, by i.i.d. assumption in Assumption \ref{assumption:part 2(2)}, we can derive the upper bound of the variance of the right-hand side of \ref{eq:thm:variance:saa:01} as follows:
\begin{equation} \label{eq:thm:variance:saa:02}
    \begin{aligned}
        \text{Var}\left[\sum_{i=1}^m \frac{4 D_X}{m\epsilon} \delta^f_n(\tilde{\xi}_j^n) + 2 \sqrt{\epsilon / \rho} \right] &= \sum_{i=1}^m \frac{16 D_X^2}{m^2\epsilon^2} \text{Var}[\delta^f_n(\tilde{\xi}_j^n)] \\
        &\leq \frac{64 D_X^2 M_F N_F}{m \epsilon^2 n^\lambda}. 
    \end{aligned}
\end{equation}
Using equations (\ref{eq:thm:variance:saa:01}), (\ref{eq:thm:variance:saa:02}), and Theorem \ref{theorem:bound of function estimation error}.1, we can derive the upper bound of the variance of $\Delta(\hat{X}_{N,\rho,\epsilon}(\tilde{\xi}^N), X_{\epsilon}^*)$ as follows:
\begin{equation} \label{eq:thm:variance:saa:03}
    \begin{aligned}
        &\text{Var}\left[\Delta(\hat{X}_{N,\rho,\epsilon}(\tilde{\xi}^N), X_{\epsilon}^*) \right] \\
        &\leq \text{Var}\left[\sum_{i=1}^m \frac{4}{m\epsilon} D_X\delta_n(\tilde{\xi}_j^n) + 2 \sqrt{\epsilon / \rho} \right] + \left(\mathbb{E}\left[\sum_{i=1}^m \frac{4}{m\epsilon} D_X\delta_n(\tilde{\xi}_j^n) + 2 \sqrt{\epsilon / \rho}\right] \right)^2 \\
        &\leq \frac{64 D_X^2 M_F N_F}{m \epsilon^2 n^\lambda} + \left(\frac{8D_X N_F}{\epsilon n^\lambda} + 2 \sqrt{\epsilon / \rho} \right)^2.
    \end{aligned}
\end{equation}
Finally, the big $O$ results follows by plugging $\rho \geq K n$ into Theorem \ref{theorem:complexity of epsilon optimal solution}.2 and (\ref{eq:thm:variance:saa:03}). 
\end{proof}

\section{Algorithms for Compromise Decision Problems} \label{sec:algorithms compromise sp}
In the previous section, we have discussed the reliability of the compromise decision using SAA in the {\sl replication} step. There are two issues which remain unresolved: (1) How should we solve the SAA problems in the {\sl replication} step? (2) How should we simplify the aggregated objective function estimate without losing reliability?\\
\indent In this section, we will continue studying the case in which a proper algorithm is applied to solve each replication of the SAA problem. We aim to provide a framework to combine the computational objects which are created during a replication of a cutting-plane-type algorithm, and then use such information to formulate an \textsl{algorithm-augmented} compromise decision problem. \\
\indent Given a sample set $\xi^n$, we let $\hat{x}_n(\xi^n)$ and $\hat{f}_n(x;\xi^n)$ denote the estimated solution and approximated objective function of SAA objective function $f_n(x;\xi^n)$ (i.e., $f_n(x;\xi^n) = \frac{1}{n} \sum_{i=1}^n F(x,\xi_i)$) output by the algorithm. 
We assume that the algorithm constructs a piecewise linear outer approximation of the SAA objective function. 
\begin{condition}[Outer Approximation] \label{condition:outer approximation}
    $\hat{f}_n(x;\xi^n)$ is a convex piecewise linear approximation of $f_n(x;\xi^n)$ (i.e., $\hat{f}_n(x;\xi^n) = \max\limits_{\ell \in L} \{\alpha_\ell + \langle \beta_\ell, x \rangle \}$, where $L$ is the index set of the pieces of $\hat{f}_n(x;\xi^n)$), such that $\hat{f}_n(x;\xi^n) \leq f_n(x;\xi^n)$ for all $x \in X$.  
\end{condition}
When the algorithm terminates, we assume that the following condition holds. 
\begin{condition}[Termination Condition] \label{condition:termination criterion} 
 Let $\hat{x}_n(\xi^n) \in \arg\min\limits_{x \in X} \hat{f}_n(x;\xi^n)$. There exists $\epsilon_1 \in (0,\infty)$ such that $f_n\left(\hat{x}_n(\xi^n);\xi^n \right) - \hat{f}_n \left(\hat{x}_n(\xi^n);\xi^n \right) \leq \epsilon_1$. 
\end{condition}
Condition \ref{condition:outer approximation} ensures that it outputs an outer approximation of the SAA of the objective function. Condition \ref{condition:termination criterion} is often used as a stopping criterion for Benders' type decomposition-based algorithms (see \cite[Chapter 6.5]{bertsimas1997introduction} for an example). For instance, Kelley's Cutting Plane Methods (\cite{kelley1960cutting}) and other Benders' type decomposition-based algorithms (\cite{oliveira2011inexact,philpott2008convergence,ruszczynski1986regularized,van1969shaped}) will satisfy the conditions above. Overall, Conditions \ref{condition:outer approximation} and \ref{condition:termination criterion} altogether ensure that an algorithm yields an $\epsilon_1$-optimal solution of the SAA problem. To see why, let $x_n(\xi^n) \in \arg \min\limits_{x \in X} f_n(x;\xi^n)$, $\theta_n(\xi^n) = \min\limits_{x \in X} f_n(x;\xi^n)$, and $\hat{\theta}_n(\xi^n) = \min\limits_{x \in X} \hat{f}_n(x;\xi^n)$. Then by Conditions \ref{condition:outer approximation} and \ref{condition:termination criterion}, we derive the following relation:
 \begin{equation} \label{eq:outer approximation:relation}
     \theta_n(\xi^n) = f_n(x_n(\xi^n);\xi^n) \geq \hat{f}_n\left(x_n(\xi^n);\xi^n \right) \geq \hat{\theta}_n(\xi^n) = \hat{f}_n\left(\hat{x}_n(\xi^n);\xi^n \right).
 \end{equation}
 Hence, by Condition \ref{condition:termination criterion} and inequality in (\ref{eq:outer approximation:relation}), we have 
 \begin{equation*}
 \begin{aligned}
     f_n\left(\hat{x}_n(\xi^n);\xi^n \right) &= \hat{f}_n\left(\hat{x}_n(\xi^n);\xi^n \right) + f_n\left(\hat{x}_n(\xi^n);\xi^n \right) - \hat{f}_n\left(\hat{x}_n(\xi^n);\xi^n \right) \\
     &\leq \theta_n(\xi^n) + \epsilon_1 
 \end{aligned}
 \end{equation*}
\indent To answer the second question, we discuss a necessary pre-processing in the {\sl aggregation} step. For $i = 1,2,\ldots,m$, we let $\hat{x}_n(\xi^n_i)$ and $\hat{f}_n(x;\xi^n_i)$ denote the estimated solution of $\min\limits_{x \in X} f_n(x;\xi^n_i)$ and surrogate function of $f_n(x;\xi^n_i)$ (i.e., $f_n(x;\xi^n_i)$ is defined in (\ref{eq:SAA function in each replication})), respectively. Given $\epsilon_2 \geq 0$, in the pre-processing step, we augment $\hat{f}_n(x;\xi^n_i)$ by letting 
\begin{equation} \label{eq:augmented piecewise linear funtcion}
\check{f}_n(x;\xi^n_i) = \max \{\hat{f}_n(x;\xi^n_i), \max_{j=1, \ldots, m} \left\{ f_n(\hat{x}_n^j;\xi^n_i) + \langle \upsilon_{n,\epsilon_2}(\hat{x}_n^j;\xi^n_i), x - \hat{x}^j_n \rangle - \epsilon_2 \} \right\},
\end{equation}
where $\upsilon_{n,\epsilon_2} (\hat{x}_n^j;\xi^n_i)$ is the $\epsilon_2$-subgradient of $f_n(\cdot;\xi^n_i)$ at $\hat{x}_n^j$. This pre-processing controls the deviation of the augmented function from its associated SAA function at the candidate solution from each replication. \\
\indent With the augmented functions and candidate solution from each replication, we can set up the compromise decision program below:
\begin{equation} \label{eq:compromise SP with approximation}
    \min_{x \in X} \frac{1}{m} \sum_{i=1}^m \check{f}_n(x;\xi^n_i) + \frac{\rho}{2} \left\|x - \frac{1}{m} \sum_{j=1}^m \hat{x}_n(\xi^n_j) \right\|^2.
\end{equation}
To distinguish (\ref{eq:compromise SP with approximation}) from previously introduced compromise decision problems in section \ref{sec:classic compromise SP}, we shall refer to the problem (\ref{eq:compromise SP with approximation}) as \textsl{algorithm-augmented} compromise decision problem. \\
\indent We let $\hat{\theta}_{N}(\xi^N)$ denote the minimum of the average of the approximation functions across $m$ replications:
\begin{equation} \label{eq:algorithm:min of average of the approximation function}
    \hat{\theta}_{N}(\xi^N) = \min_{x \in X} \frac{1}{m} \sum_{i=1}^m \check{f}_{n}(x;\xi^n_i).
\end{equation}
Given a constant $\epsilon > 0$, we then let $\hat{X}_{N,\epsilon}(\xi^N)$ denote the $\epsilon$-optimal solution set of the problem in (\ref{eq:algorithm:min of average of the approximation function}):
\begin{equation} \label{eq:epsilon-optimal solution of augmented agg problem}
    \hat{X}_{N,\epsilon}(\xi^N) = \left\{x \in X: \frac{1}{m} \sum_{i=1}^m \check{f}_{n}(x;\xi^n_i) \leq \bar{\theta}_{N}(\xi^N) + \epsilon \right\}
\end{equation}
We let $\hat{\theta}_{N,\rho,\epsilon}(\xi^N)$ and denote $\check{X}_{N, \rho, \epsilon}(\xi^N)$ the optimal cost and $\epsilon$-optimal solution set of \textsl{algorithm-augmented} compromise decision problem in (\ref{eq:compromise SP with approximation}):
\begin{equation*}
    \hat{\theta}_{N,\rho,\epsilon}(\xi^N) = \min_{x \in X} \frac{1}{m} \sum_{i=1}^m \check{f}_{n}(x;\xi^n_i) + \frac{\rho}{2} \left\|x - \frac{1}{m} \sum_{i=1}^m \hat{x}_{n}(\xi^n_i) \right\|^2,
\end{equation*}
\begin{equation} \label{eq:epsilon-optimal solution set}
        \check{X}_{N, \rho, \epsilon}(\xi^N) = \left\{x \in X: \frac{1}{m} \sum_{i=1}^m f_{n}(x;\xi^n_i) + \frac{\rho}{2} \left\|x - \frac{1}{m} \sum_{i=1}^m \hat{x}_{n}(\xi^n_i) \right\|^2 \leq \hat{\theta}_{N,\rho,\epsilon}(\xi^N) + \epsilon \right\}. 
\end{equation}
By the same proof technique in Lemma \ref{lemma:compromise:relation2}, we can derive the upper bound of the pessimistic distance of $\check{X}_{N,\rho,\epsilon}(\xi^N)$ to $X_{\epsilon}^*$:
\begin{equation*}
\begin{aligned}
        \Delta(\check{X}_{N,\rho,\epsilon}(\xi^N), X_{\epsilon}^*) &\leq  \inf_{x \in \hat{X}_{N,\epsilon}(\xi^N) } \left\| \frac{1}{m} \sum_{j=1}^m \hat{x}_{n}(\xi^n_j) - x \right\|  \\
        &\quad + \inf_{x \in X_{\epsilon}^*}\left\|\frac{1}{m} \sum_{j=1}^m \hat{x}_{n}(\xi^n_j) - x \right\| + 2 \sqrt{\epsilon / \rho}.
\end{aligned}
\end{equation*}
We derive the upper bounds of $\inf_{x \in X_{\epsilon}^*}\|\frac{1}{m} \sum_{j=1}^m \hat{x}_{n}(\xi^n_j) - x\|$ and $\inf_{x \in \hat{X}_{N,\epsilon}(\xi^N) } \| \frac{1}{m} \sum_{j=1}^m \hat{x}_{n}(\xi^n_j) - x\|$ in the following two lemmas. 
\begin{lemma} \label{lemma:bound 1 of algorithm for compromise sp}
    Let $X^*$ be defined in (\ref{eq:epsilon optimal solution set:true problem}). Let $\hat{x}_{n}(\xi^n_j)$ be the candidate solution in the $j^{\text{th}}$ replication, for $j = 1,2,\ldots,m$. Let $\delta_n^f(\cdot)$ be defined in Definition \ref{def:supremum of sampling error of the objective function estimate}. Suppose that Assumptions \ref{assumption:part 1}, \ref{assumption:part 2}, and \ref{assumption:part 2(2)} hold. Further, suppose that Conditions \ref{condition:outer approximation} and \ref{condition:termination criterion} hold. If $\epsilon \leq \epsilon_1$, then the following relation holds:
    \begin{equation*}
        \inf_{x \in X_{\epsilon}^*}\left\|\frac{1}{m} \sum_{j=1}^m \hat{x}_{n}(\xi^n_j) - x \right\| \leq \frac{\tau_1 + \frac{2}{m} \sum_{j=1}^m \delta^f_{n}(\xi^n_j)}{\epsilon} D_X,
    \end{equation*}
    where $\tau_1 = \epsilon_1 - \epsilon$.
\end{lemma}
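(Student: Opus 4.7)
The plan is to mirror the two-step structure used in the proof of Lemma \ref{lemma:compromise:relation4}: first show that the averaged candidate $\bar{x}_N := \frac{1}{m}\sum_{j=1}^m \hat{x}_n(\xi^n_j)$ lies in an inflated optimality set $X^*_{\epsilon'}$ of the true problem, then invoke Theorem \ref{theorem:lipschitz continuity of solution set} to translate the inflation gap $\epsilon' - \epsilon$ into the desired pessimistic distance bound.

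First I would exploit Conditions \ref{condition:outer approximation} and \ref{condition:termination criterion} to argue that each $\hat{x}_n(\xi^n_j)$ is an $\epsilon_1$-optimal solution of the $j$-th SAA problem. This is exactly the inequality chain already displayed just after Condition \ref{condition:termination criterion}, giving
\begin{equation*}
    f_n(\hat{x}_n(\xi^n_j);\xi^n_j) \leq \theta_n(\xi^n_j) + \epsilon_1, \qquad j = 1, \ldots, m.
\end{equation*}
Next, I would bound $f(\hat{x}_n(\xi^n_j))$ in terms of $\theta^*$. Using Definition \ref{def:supremum of sampling error of the objective function estimate} once to swap $f$ and $f_n(\cdot;\xi^n_j)$, together with the bound on $\theta_n(\xi^n_j) - \theta^*$ derived in (\ref{eq:complexity:cost2:02}) (evaluating $f_n(\cdot;\xi^n_j)$ at a true optimizer $x^*$), I obtain
\begin{equation*}
    f(\hat{x}_n(\xi^n_j)) \leq f_n(\hat{x}_n(\xi^n_j);\xi^n_j) + \delta^f_n(\xi^n_j) \leq \theta^* + \epsilon_1 + 2\,\delta^f_n(\xi^n_j).
\end{equation*}

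Now I would apply the convexity of $f$ guaranteed by Assumption \ref{assumption:part 2} (which transfers from $F(\cdot,\xi)$ to $f = \mathbb{E}[F(\cdot,\tilde\xi)]$) to the average $\bar{x}_N$, yielding
\begin{equation*}
    f(\bar{x}_N) \leq \frac{1}{m}\sum_{j=1}^m f(\hat{x}_n(\xi^n_j)) \leq \theta^* + \epsilon_1 + \frac{2}{m}\sum_{j=1}^m \delta^f_n(\xi^n_j).
\end{equation*}
Setting $\epsilon' := \epsilon + \tau_1 + \frac{2}{m}\sum_{j=1}^m \delta^f_n(\xi^n_j)$ (where $\tau_1 = \epsilon_1 - \epsilon \geq 0$ thanks to the hypothesis $\epsilon \leq \epsilon_1$), this says $\bar{x}_N \in X^*_{\epsilon'}$. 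Finally, because $\epsilon' \geq \epsilon > 0$ and $X$ is compact convex with diameter at most $D_X$, Theorem \ref{theorem:lipschitz continuity of solution set} applied to $f$ gives
\begin{equation*}
    \inf_{x \in X^*_{\epsilon}} \|\bar{x}_N - x\| \leq \Delta(X^*_{\epsilon'}, X^*_{\epsilon}) \leq \frac{\epsilon' - \epsilon}{\epsilon} D_X = \frac{\tau_1 + \frac{2}{m}\sum_{j=1}^m \delta^f_n(\xi^n_j)}{\epsilon} D_X,
\end{equation*}
which is exactly the claim.

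I do not anticipate a serious obstacle: the only subtle point is making sure that the $\epsilon_1$-optimality of $\hat{x}_n(\xi^n_j)$ for the SAA problem is pushed through both the sampling-error gap (via $\delta^f_n$) and the averaging (via convexity) before Theorem \ref{theorem:lipschitz continuity of solution set} is invoked; the hypothesis $\epsilon \leq \epsilon_1$ is what keeps $\tau_1 \geq 0$ and lets the Lipschitz-of-solution-set inequality be applied in the forward direction ($\epsilon' \geq \epsilon$).
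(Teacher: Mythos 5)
Your proposal is correct and follows essentially the same route as the paper's own proof in Appendix \ref{appendix:proof of lemma:bound 1 of algorithm for compromise sp}: establish $\epsilon_1$-optimality of each $\hat{x}_n(\xi^n_j)$ for its SAA problem via Conditions \ref{condition:outer approximation} and \ref{condition:termination criterion}, pass to the true objective using $\delta^f_n$ and the bound (\ref{eq:complexity:cost2:02}), average via convexity to place $\frac{1}{m}\sum_j \hat{x}_n(\xi^n_j)$ in $X^*_{\epsilon'}$ with $\epsilon' = \epsilon_1 + \frac{2}{m}\sum_j \delta^f_n(\xi^n_j)$, and conclude with Theorem \ref{theorem:lipschitz continuity of solution set}. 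The only cosmetic difference is that you insert the sampling-error correction per replication before averaging, whereas the paper averages first; the resulting bound is identical.
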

\proof
We aim to show that $\frac{1}{m} \sum_{j=1}^m \hat{x}_{n}(\xi^n_j)$ is an $\epsilon'$-optimal solution of true problem in (\ref{eq:generic sp}), where $\epsilon' = \epsilon_1 + \frac{2}{m} \sum_{j=1}^m \delta^f_{n}(\xi^n_j)$. Next, we can utilize Theorem \ref{theorem:lipschitz continuity of solution set} to finish the proof. See Appendix \ref{appendix:proof of lemma:bound 1 of algorithm for compromise sp} for detailed proof.
\endproof

\begin{lemma} \label{lemma:bound 2 of algorithm for compromise sp}
Let $\hat{X}_{N,\epsilon}(\xi^N)$ be defined in (\ref{eq:epsilon-optimal solution of augmented agg problem}). Let $\hat{x}_{n}(\xi^n_j)$ be the candidate solution in the $j^{\text{th}}$ replication, for $j = 1,2,\ldots,m$. Let $\delta_n^f(\cdot)$ be defined in Definition \ref{def:supremum of sampling error of the objective function estimate}. Suppose that Assumptions \ref{assumption:part 1}, \ref{assumption:part 2}, and \ref{assumption:part 2(2)} hold. Further suppose that Conditions \ref{condition:outer approximation} and \ref{condition:termination criterion} hold. If $\epsilon \leq \epsilon_1 + \frac{m-1}{m} \epsilon_2$, then the following relation holds:
\begin{equation*}
    \inf_{x \in \hat{X}_{N,\epsilon}(\xi^N) } \left\| \frac{1}{m} \sum_{j=1}^m \hat{x}_{n}(\xi^n_j) - x \right\| \leq \frac{\tau_2 + \frac{1}{m^2} \sum_{\substack{i,j=1 \\ i \neq j}}^m \left(\delta^f_{n}(\xi^n_i) + \delta^f_{n}(\xi^n_j) \right)}{ \epsilon} D_X, 
\end{equation*} 
where $\tau_2 = \epsilon_1 + \frac{m-1}{m} \epsilon_2 - \epsilon$.
\end{lemma}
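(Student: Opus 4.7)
The plan mirrors the two-step template of Lemma \ref{lemma:compromise:relation3}. First I will exhibit $\bar{x}\triangleq \tfrac{1}{m}\sum_{j=1}^m \hat{x}_n(\xi^n_j)$ as an $\epsilon'$-optimal solution of the aggregated augmented problem $\min_{x\in X}g(x)$, where $g(x):=\tfrac{1}{m}\sum_{i=1}^m \check{f}_n(x;\xi^n_i)$ and $\epsilon':=\epsilon_1+\tfrac{m-1}{m}\epsilon_2+\tfrac{1}{m^2}\sum_{i\neq j}[\delta^f_n(\xi^n_i)+\delta^f_n(\xi^n_j)]$. Since $g$ is convex and the hypothesis $\epsilon\le \epsilon_1+\tfrac{m-1}{m}\epsilon_2$ ensures $\epsilon\le \epsilon'$, Theorem \ref{theorem:lipschitz continuity of solution set} applied to the pair $(\epsilon',\epsilon)$ will then yield $\inf_{x\in \hat{X}_{N,\epsilon}(\xi^N)}\|\bar{x}-x\|\le \Delta(\hat{X}_{N,\epsilon'}(\xi^N),\hat{X}_{N,\epsilon}(\xi^N))\le \frac{\epsilon'-\epsilon}{\epsilon}D_X$, which rearranges into the claimed bound with $\tau_2=\epsilon_1+\tfrac{m-1}{m}\epsilon_2-\epsilon$.

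For the upper bound on $g(\bar{x})$, I start from convexity of each $\check{f}_n(\cdot;\xi^n_i)$ (the max in \eqref{eq:augmented piecewise linear funtcion} preserves convexity) and the outer-approximation inequality $\check{f}_n(\cdot;\xi^n_i)\le f_n(\cdot;\xi^n_i)$ on $X$: every newly added piece is an $\epsilon_2$-subgradient affine minorant of $f_n(\cdot;\xi^n_i)$, and Condition \ref{condition:outer approximation} controls $\hat{f}_n$. Jensen's inequality gives $g(\bar{x})\le \tfrac{1}{m^2}\sum_{i,j}\check{f}_n(\hat{x}_n(\xi^n_j);\xi^n_i)\le \tfrac{1}{m^2}\sum_{i,j}f_n(\hat{x}_n(\xi^n_j);\xi^n_i)$. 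Splitting the double sum according to $i=j$ versus $i\neq j$: on the diagonal, Conditions \ref{condition:outer approximation}--\ref{condition:termination criterion} give $f_n(\hat{x}_n(\xi^n_j);\xi^n_j)\le \hat{\theta}_n(\xi^n_j)+\epsilon_1$; off the diagonal, the same insert-and-subtract trick used in \eqref{eq:lemma:compromise:relation3:02} yields $f_n(\hat{x}_n(\xi^n_j);\xi^n_i)\le \hat{\theta}_n(\xi^n_j)+\epsilon_1+\delta^f_n(\xi^n_i)+\delta^f_n(\xi^n_j)$. Collecting delivers $g(\bar{x})\le \tfrac{1}{m}\sum_j \hat{\theta}_n(\xi^n_j)+\epsilon_1+\tfrac{1}{m^2}\sum_{i\neq j}[\delta^f_n(\xi^n_i)+\delta^f_n(\xi^n_j)]$. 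For the matching lower bound, since $\check{f}_n(\cdot;\xi^n_i)\ge \hat{f}_n(\cdot;\xi^n_i)$ (the max in \eqref{eq:augmented piecewise linear funtcion} already contains $\hat{f}_n$) and the minimum of an average dominates the average of the minima, $\hat{\theta}_N(\xi^N)\ge \min_{x\in X}\tfrac{1}{m}\sum_i \hat{f}_n(x;\xi^n_i)\ge \tfrac{1}{m}\sum_i \hat{\theta}_n(\xi^n_i)$. Subtracting the two estimates produces $g(\bar{x})-\hat{\theta}_N(\xi^N)\le \epsilon_1+\tfrac{1}{m^2}\sum_{i\neq j}[\delta^f_n(\xi^n_i)+\delta^f_n(\xi^n_j)]\le \epsilon'$, so $\bar{x}\in \hat{X}_{N,\epsilon'}(\xi^N)$ and Step 1 is complete; the additive $\tfrac{m-1}{m}\epsilon_2$ retained in $\epsilon'$ is slack under this particular argument and is carried only to keep the constants uniform with the sharper proofs to follow that activate the $\epsilon_2$-subgradient pieces directly.

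The step I expect to be most delicate is the pointwise inequality $\check{f}_n(\cdot;\xi^n_i)\le f_n(\cdot;\xi^n_i)$ on $X$: one must confirm from the $\epsilon_2$-subdifferential definition that each piece generated from $\upsilon_{n,\epsilon_2}(\hat{x}_n(\xi^n_j);\xi^n_i)$ is indeed an honest affine $\epsilon_2$-minorant of the SAA function, so that the max in \eqref{eq:augmented piecewise linear funtcion} remains a valid outer approximation; without this, the whole decomposition of $g(\bar{x})$ collapses. A secondary bookkeeping challenge is keeping $\epsilon_1$ (algorithmic termination slack, entering through the diagonal $i=j$ terms) separate from $\epsilon_2$ (subgradient slack, entering only in the looser version of the lower-bound comparison between $\hat{\theta}_N$ and $\tfrac{1}{m}\sum_i \hat{\theta}_n$), before they finally combine at the assembly of $\epsilon'$.
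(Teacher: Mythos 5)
Your proof is correct, but it takes a genuinely different route from the paper's. The paper works with the augmented functions $\check{f}_n(\cdot;\xi^n_i)$ end to end: it introduces a minimizer $\hat{x}_N(\xi^N)$ of the aggregated augmented objective, and for the off-diagonal terms it bounds $\check{f}_{n}(\hat{x}_{n}(\xi^n_j);\xi^n_i) - \check{f}_{n}(\hat{x}_{n}(\xi^n_j);\xi^n_j)$ by $\epsilon_2 + \delta^f_n(\xi^n_i)+\delta^f_n(\xi^n_j)$ using the two-sided control $0 \leq f_n - \check{f}_n \leq \epsilon_2$ at the candidate points, which is exactly what the $\epsilon_2$-subgradient augmentation in (\ref{eq:augmented piecewise linear funtcion}) provides; the diagonal terms are handled via Conditions \ref{condition:outer approximation}--\ref{condition:termination criterion} as in (\ref{eq:lemma:pla03:01})--(\ref{eq:lemma:pla03:03}), and this is how the $\frac{m-1}{m}\epsilon_2$ enters $\tau_2$. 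You instead pass immediately to the SAA functions via $\check{f}_n \leq f_n$ for the upper bound on $g(\bar{x})$ and to the outer approximations via $\check{f}_n \geq \hat{f}_n$ (plus min-of-average $\geq$ average-of-mins) for the lower bound on $\hat{\theta}_N(\xi^N)$; all your intermediate inequalities check out, including the pointwise validity of the $\epsilon_2$-minorant pieces and the diagonal/off-diagonal bookkeeping. The payoff of your route is that it is shorter and in fact yields the sharper gap $\epsilon_1 + \frac{1}{m^2}\sum_{i\neq j}[\delta^f_n(\xi^n_i)+\delta^f_n(\xi^n_j)]$ with no $\epsilon_2$ at all, so the stated $\tau_2$ is slack under your argument (as you correctly observe, membership in the smaller sublevel set implies membership in $\hat{X}_{N,\epsilon'}(\xi^N)$, so Theorem \ref{theorem:lipschitz continuity of solution set} still delivers the claimed inequality); the paper's route keeps the estimate intrinsic to the augmented objective that is actually minimized in the compromise problem, which is why $\epsilon_2$ survives into its downstream reliability constants.
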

\proof
Here, we aim to show that $\frac{1}{m} \sum_{j=1}^m \hat{x}_{n}(\xi^n_j)$ is the $\epsilon''$-optimal solution of the aggregated problem in (\ref{eq:algorithm:min of average of the approximation function}), where $\epsilon'' = \tau_2 + \frac{1}{m^2} \sum_{\substack{i,j=1 \\ i \neq j}}^m \left(\delta^f_{n}(\xi^n_i) + \delta^f_{n}(\xi^n_j) \right)$. Then we can apply Theorem \ref{theorem:lipschitz continuity of solution set} to derive the upper bound of its associated pessimistic distance. See Appendix \ref{appendix:proof of lemma:bound 2 of algorithm for compromise sp} for detailed proof.
\endproof

We derive the finite-sample complexity of the \textsl{algorithm-augmented} compromise decision problem (\ref{eq:compromise SP with approximation}) in the following theorem.
\begin{theorem} \label{theorem:complexity of compromise sp with piecewise linear approximation}
    Let $\check{X}_{N,\rho,\epsilon}(\tilde{\xi}^N)$ and $X^*_\epsilon$ be defined in (\ref{eq:epsilon-optimal solution set}) and (\ref{eq:epsilon optimal solution set:true problem}), respectively. Suppose that Assumptions \ref{assumption:part 1}, \ref{assumption:part 2}, and \ref{assumption:part 2(2)} hold. Further suppose that Conditions \ref{condition:outer approximation} and \ref{condition:termination criterion} hold. Let $\lambda \in (0, \frac{1}{2})$ and $N_F$ be a constant defined in Lemma \ref{lemma:rademacher complexity of objective:holder continuity}. 
    If $\epsilon \leq \epsilon_1$, then the following hold:
    \begin{itemize}
        \item[1.] \begin{equation*}
        \begin{aligned}
    \mathbb{E}\left[\Delta(\check{X}_{N,\rho,\epsilon}(\tilde{\xi}^N), X_{\epsilon}^*)\right] &\leq \frac{D_X N_F}{\epsilon}\frac{8m - 4}{m n^\lambda} + \frac{(\tau_1 + \tau_2) D_X}{\epsilon} + 2\sqrt{\epsilon/\rho},
        \end{aligned}
\end{equation*}
where $\tau_1 = \epsilon_1 - \epsilon$ and $\tau_2 = \epsilon_1 + \frac{m-1}{m} \epsilon_2 - \epsilon$.
    \item[2.] Furthermore, if $\epsilon_1 = \epsilon$, $\epsilon_2 = 0$, $\rho = n$, and $C = \frac{\epsilon^{\frac{3}{2}}}{D_X} + \frac{(4m - 2) N_F}{m}$, then
    \begin{equation*}
        \Pr \left\{\frac{\epsilon n^\lambda}{2 D_X} \Delta(\check{X}_{N,\rho,\epsilon}(\tilde{\xi}^N), X_{\epsilon}^*) \geq C + t \right\} \leq m \exp \left\{-\frac{m^2 t^2}{2 M^2_F(2m - 1)^2} \right\}.
    \end{equation*}
    \end{itemize}
\end{theorem}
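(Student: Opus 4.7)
The plan is to mirror the proof strategy of Theorem~\ref{theorem:complexity of epsilon optimal solution}, with Lemmas~\ref{lemma:bound 1 of algorithm for compromise sp} and~\ref{lemma:bound 2 of algorithm for compromise sp} playing the role that Lemmas~\ref{lemma:compromise:relation3} and~\ref{lemma:compromise:relation4} played before. The starting point is the pessimistic-distance decomposition stated just after equation~\eqref{eq:epsilon-optimal solution set}, obtained by the same argument as Lemma~\ref{lemma:compromise:relation2}. Substituting the bounds of Lemmas~\ref{lemma:bound 1 of algorithm for compromise sp} and~\ref{lemma:bound 2 of algorithm for compromise sp} yields the pointwise estimate
\begin{equation*}
\Delta(\check{X}_{N,\rho,\epsilon}(\xi^N),X^*_\epsilon)
\le \frac{D_X}{\epsilon}\left[(\tau_1+\tau_2) + \frac{2}{m}\sum_{j=1}^m \delta^f_n(\xi^n_j)+\frac{1}{m^2}\sum_{\substack{i,j=1\\ i\neq j}}^m\bigl(\delta^f_n(\xi^n_i)+\delta^f_n(\xi^n_j)\bigr)\right]+2\sqrt{\epsilon/\rho},
\end{equation*}
which is the analogue of Theorem~\ref{theorem:complexity of epsilon optimal solution}.1 augmented by the algorithmic error $(\tau_1+\tau_2)D_X/\epsilon$.

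For Part~1, I would take expectations on both sides and invoke Theorem~\ref{theorem:bound of function estimation error}.1 together with the i.i.d. assumption~\ref{assumption:part 2(2)} to bound $\mathbb{E}[\delta^f_n(\tilde\xi^n_j)]\le 2 N_F/n^\lambda$. The single-sum contributes $4N_F/n^\lambda$ and the double sum over $i\neq j$ has $2m(m-1)$ terms each bounded by $2N_F/n^\lambda$, yielding $4(m-1)N_F/(mn^\lambda)$. Summing these gives the factor $(8m-4)/(mn^\lambda)$, and the announced expectation bound follows.

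For Part~2, I set $\epsilon_1=\epsilon$, $\epsilon_2=0$, so that $\tau_1=\tau_2=0$, and use $\rho=n$ together with $\lambda\in(0,\tfrac12)$ to bound $2\sqrt{\epsilon/\rho}\,(\epsilon n^\lambda/2D_X) \le \epsilon^{3/2}/D_X$ because $n^\lambda/\sqrt{n}\le 1$. Rewriting the double sum as $2(m-1)\sum_i \delta^f_n(\xi^n_i)$ and collecting coefficients reduces the combined sum to $\sum_{i=1}^m \frac{(2m-1)n^\lambda}{m^2}\delta^f_n(\xi^n_i)$, so the event inside the probability becomes
\begin{equation*}
\Bigl\{\sum_{i=1}^m \tfrac{(2m-1)n^\lambda}{m^2}\delta^f_n(\tilde\xi^n_i)\ge \tfrac{(4m-2)N_F}{m}+t\Bigr\}.
\end{equation*}
A union bound allocates $t/m$ and $(4m-2)N_F/m^2$ to each summand, so each event is $\{n^\lambda\delta^f_n(\tilde\xi^n_i)\ge 2N_F+mt/(2m-1)\}$, which is controlled by Theorem~\ref{theorem:bound of function estimation error}.2 with its concentration constant $M_F$. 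Summing $m$ identical tail bounds gives the stated $m\exp(-m^2 t^2/(2M_F^2(2m-1)^2))$.

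The main obstacle, as in the earlier proofs, is purely bookkeeping: keeping the algorithmic error terms $\tau_1,\tau_2$ separated from the sampling-error terms so that they can be zeroed out cleanly in Part~2, and verifying that the coefficients in the double sum still collapse to the same $(4m-2)N_F/m$ centering constant that appears in Theorem~\ref{theorem:complexity of epsilon optimal solution}.3. Once that coefficient arithmetic is checked, both parts follow by direct substitution without additional probabilistic input beyond Theorem~\ref{theorem:bound of function estimation error}.
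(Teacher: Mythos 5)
Your proposal is correct and follows essentially the same route as the paper's proof: both start from the pessimistic-distance decomposition following \eqref{eq:epsilon-optimal solution set}, substitute Lemmas \ref{lemma:bound 1 of algorithm for compromise sp} and \ref{lemma:bound 2 of algorithm for compromise sp}, take expectations via Theorem \ref{theorem:bound of function estimation error}.1 for Part 1, and for Part 2 zero out $\tau_1,\tau_2$ and rerun the union-bound concentration argument of Theorem \ref{theorem:complexity of epsilon optimal solution}.3. Your coefficient bookkeeping (the $\tfrac{8m-4}{mn^\lambda}$ factor and the collapse to $\sum_i \tfrac{(2m-1)n^\lambda}{m^2}\delta^f_n(\tilde\xi^n_i)$ with centering constant $\tfrac{(4m-2)N_F}{m}$) checks out and matches the paper.
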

\proof
The proof strategy is similar to the Theorem \ref{theorem:complexity of epsilon optimal solution}. By Lemmas \ref{lemma:bound 1 of algorithm for compromise sp} and \ref{lemma:bound 2 of algorithm for compromise sp}, we have 
\begin{equation} \label{eq:proof of complexity of compromise sp with piecewise linear approximation01}
    \begin{aligned}
        \Delta\left(\check{X}_{N,\rho,\epsilon}(\tilde{\xi}^N), X_{\epsilon}^* \right) 
        &\leq \frac{\tau_1 + \frac{2}{m} \sum_{j=1}^m \delta^f_{n}(\tilde{\xi}^n_j)}{\epsilon} D_X \\
        & \quad + \frac{\tau_2 + \frac{1}{m^2} \sum_{\substack{i,j=1 \\ i \neq j}}^m \left(\delta^f_{n}(\tilde{\xi}^n_i) + \delta^f_{n}(\tilde{\xi}^n_j) \right)}{ \epsilon} D_X + 2 \sqrt{\epsilon / \rho} 
    \end{aligned}
\end{equation}
According to Theorem \ref{theorem:bound of function estimation error}, we have 
    $\mathbb{E}[\delta_n^f(\tilde{\xi}^n_j)] \leq \frac{2 N_F}{n^\lambda}$.
Hence, by taking the expectation of both sides of (\ref{eq:proof of complexity of compromise sp with piecewise linear approximation01}), we have 
\begin{equation*}
    \mathbb{E}\left[\Delta(\check{X}_{N,\rho,\epsilon}(\tilde{\xi}^N), X_{\epsilon}^*)\right] \leq \frac{D_X N_F}{\epsilon}\frac{8m - 4}{m n^\lambda} + \frac{(\tau_1 + \tau_2) D_X}{\epsilon} + 2\sqrt{\epsilon/\rho}.
\end{equation*}
Pick $\epsilon_1 = \epsilon$, $\epsilon_2 = 0$, and $C = \frac{\epsilon^{\frac{3}{2}}}{D_X} + \frac{(4m - 2) N_F}{m}$. Then $\tau_1 = \tau_2 = 0$. Hence, we have 
\begin{equation*}
    \mathbb{E}\left[\Delta(\check{X}_{N,\rho,\epsilon}(\tilde{\xi}^N), X_{\epsilon}^*)\right] \leq \frac{D_X N_F}{\epsilon}\frac{8m - 4}{m n^\lambda} + 2\sqrt{\epsilon/n}.
\end{equation*}
Then rest of the proof is the same as the proof of Theorem \ref{theorem:complexity of epsilon optimal solution}. 
\endproof
To end this section, we obtain the upper bound of the variance of $\Delta(\check{X}_{N,\rho,\epsilon}(\tilde{\xi}^N), X_{\epsilon}^*)$ in the following theorem.
\begin{theorem} \label{thm:variance:decomposition}
    Suppose that Assumptions \ref{assumption:part 1}, \ref{assumption:part 2}, and \ref{assumption:part 2(2)} hold. Further suppose that Conditions \ref{condition:outer approximation} and \ref{condition:termination criterion} hold. Let $\lambda \in (0, \frac{1}{2})$ and $N_F$ be a constant defined in Lemma \ref{lemma:rademacher complexity of objective:holder continuity}. 
    If $\epsilon \leq \epsilon_1$, then the following hold:
    \begin{itemize}
        \item[1.] 
    \begin{equation*}
        \text{Var}\left[\Delta(\check{X}_{N,\rho,\epsilon}(\tilde{\xi}^N), X_{\epsilon}^*) \right] \leq \frac{64 D_X^2 M_F N_F}{m \epsilon^2 n^\lambda} + 
        \left(\frac{8D_X N_F}{\epsilon n^\lambda} + \frac{(\tau_1 + \tau_2) D_X}{\epsilon}  + 2\sqrt{\epsilon/\rho} \right)^2,
    \end{equation*}
    where $\tau_1 = \epsilon_1 - \epsilon$ and $\tau_2 = \epsilon_1 + \frac{m-1}{m} \epsilon_2 - \epsilon$.
    \item[2.] Furthermore, let $K_1$ be a positive constant. For $\epsilon_1 = \epsilon$, $\epsilon_2 = 0$, and $\rho \geq K_1 n$, we have 
    $$
    \mathbb{E}[\Delta(\check{X}_{N,\rho,\epsilon}(\tilde{\xi}^N), X_{\epsilon}^*)] = O(\frac{1}{n^\lambda}), \ \text{Var}[\Delta(\check{X}_{N,\rho,\epsilon}(\tilde{\xi}^N), X_{\epsilon}^*)] = O(\frac{1}{mn^\lambda} + \frac{1}{n^{2\lambda}}).
    $$
\end{itemize}
\end{theorem}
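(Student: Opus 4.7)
The plan is to follow the template established in Theorem \ref{thm:variance:saa}, adapting it to the algorithm-augmented setting where the additional deterministic error terms $\tau_1$ and $\tau_2$ appear. The starting point is the pointwise bound (\ref{eq:proof of complexity of compromise sp with piecewise linear approximation01}) already derived in the proof of Theorem \ref{theorem:complexity of compromise sp with piecewise linear approximation}, which reads
\begin{equation*}
\Delta\left(\check{X}_{N,\rho,\epsilon}(\tilde{\xi}^N), X_{\epsilon}^*\right) \leq \frac{\tau_1 + \frac{2}{m}\sum_{j=1}^m \delta^f_n(\tilde{\xi}^n_j)}{\epsilon} D_X + \frac{\tau_2 + \frac{1}{m^2}\sum_{\substack{i,j=1\\ i\neq j}}^m(\delta^f_n(\tilde{\xi}^n_i)+\delta^f_n(\tilde{\xi}^n_j))}{\epsilon}D_X + 2\sqrt{\epsilon/\rho}.
\end{equation*}
First, I would collect the coefficients of each $\delta^f_n(\tilde{\xi}^n_i)$ on the right-hand side, exactly as in (\ref{eq:thm:variance:saa:01}), to rewrite this bound in the compact form
\begin{equation*}
\Delta\left(\check{X}_{N,\rho,\epsilon}(\tilde{\xi}^N), X_{\epsilon}^*\right) \leq \sum_{i=1}^m \frac{4 D_X}{m\epsilon} \delta^f_n(\tilde{\xi}^n_i) + \frac{(\tau_1+\tau_2)D_X}{\epsilon} + 2\sqrt{\epsilon/\rho},
\end{equation*}
where the second and third summands are deterministic constants that carry no variance.

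Next, I would invoke the general variance inequality (\ref{eq:common bound on the variance}) with $X = \Delta(\check{X}_{N,\rho,\epsilon}(\tilde{\xi}^N), X_\epsilon^*)$ and $Y$ equal to the right-hand side above, which gives $\text{Var}[X] \leq \text{Var}[Y] + (\mathbb{E}[Y])^2$. The variance of $Y$ is controlled using the i.i.d. assumption in Assumption \ref{assumption:part 2(2)} --- the replications $\tilde{\xi}^n_1,\ldots,\tilde{\xi}^n_m$ are mutually independent, so the variance separates as a sum, and plugging in the per-replication bound $\text{Var}[\delta^f_n(\tilde{\xi}^n_i)] \leq 4 M_F N_F/n^\lambda$ from (\ref{eq:bound of the varaince}) yields
\begin{equation*}
\text{Var}[Y] = \sum_{i=1}^m \frac{16 D_X^2}{m^2 \epsilon^2}\,\text{Var}[\delta^f_n(\tilde{\xi}^n_i)] \leq \frac{64 D_X^2 M_F N_F}{m\epsilon^2 n^\lambda}.
\end{equation*}
For $\mathbb{E}[Y]$, Theorem \ref{theorem:bound of function estimation error}.1 gives $\mathbb{E}[\delta^f_n(\tilde{\xi}^n_i)] \leq 2 N_F/n^\lambda$, producing $\mathbb{E}[Y] \leq 8 D_X N_F/(\epsilon n^\lambda) + (\tau_1+\tau_2)D_X/\epsilon + 2\sqrt{\epsilon/\rho}$. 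Adding $\text{Var}[Y]$ and $(\mathbb{E}[Y])^2$ delivers the expression in Part 1 exactly.

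For Part 2, under the simplifications $\epsilon_1 = \epsilon$, $\epsilon_2 = 0$ we have $\tau_1 = \tau_2 = 0$, and $\rho \geq K_1 n$ forces $\sqrt{\epsilon/\rho} = O(n^{-1/2})$. Since $\lambda \in (0,\tfrac{1}{2})$, the term $n^{-1/2}$ is dominated by $n^{-\lambda}$, so Theorem \ref{theorem:complexity of compromise sp with piecewise linear approximation}.1 immediately yields $\mathbb{E}[\Delta(\check{X}_{N,\rho,\epsilon}(\tilde{\xi}^N), X_\epsilon^*)] = O(1/n^\lambda)$. For the variance, the first summand in Part 1 gives $O(1/(mn^\lambda))$, while the squared-expectation term is $(O(1/n^\lambda) + O(1/\sqrt{n}))^2 = O(1/n^{2\lambda})$, again because $2\lambda < 1$. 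Combining gives the stated $O(1/(mn^\lambda) + 1/n^{2\lambda})$. The argument is essentially bookkeeping; the only mild subtlety is ensuring the deterministic $\tau_1,\tau_2,2\sqrt{\epsilon/\rho}$ terms are absorbed correctly into $\mathbb{E}[Y]$ and contribute nothing to $\text{Var}[Y]$, so that independence across replications can be exploited cleanly --- this is the one step where it is easy to double-count if one is not careful.
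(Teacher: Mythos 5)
Your proposal is correct and follows essentially the same route as the paper: both start from the pointwise bound of Theorem \ref{theorem:complexity of compromise sp with piecewise linear approximation}, collect coefficients into $\sum_{i=1}^m \frac{4D_X}{m\epsilon}\delta^f_n(\tilde{\xi}^n_i)$ plus the deterministic terms, and then apply (\ref{eq:common bound on the variance}) together with independence across replications and the bounds (\ref{eq:bound of the varaince}) and Theorem \ref{theorem:bound of function estimation error}.1, exactly as in Theorem \ref{thm:variance:saa}. Your write-up is in fact more explicit than the paper's, which only sketches the reduction to the earlier theorem.
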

\begin{proof}
The proof strategy is similar to Theorem \ref{thm:variance:saa}. It follows from Theorem \ref{theorem:complexity of compromise sp with piecewise linear approximation} that 
\begin{equation*}
    \begin{aligned}
        \Delta(\check{X}_{N,\rho,\epsilon}(\xi^N), X_{\epsilon}^*) \leq \sum_{i=1}^n \frac{4D_X}{m\epsilon} \delta(\xi^n_i) + \frac{(\tau_1 + \tau_2) D_X}{\epsilon}  + 2\sqrt{\epsilon/\rho}. 
    \end{aligned}
\end{equation*}
Compared with Theorem \ref{thm:variance:saa}, the only difference is that we have an extra term $\frac{(\tau_1 + \tau_2) D_X}{\epsilon}$, which bounds the systematic error in the algorithm. Furthermore, when $\epsilon_1 = \epsilon$, $\epsilon_2 = 0$, the upper bound of $\Delta(\check{X}_{N,\rho,\epsilon}(\xi^N), X_{\epsilon}^*)$ is reduced to the one of Theorem \ref{thm:variance:saa}. 
\end{proof}
Inspired by the above results, we end this section with a mean-variance optimization problem which summarizes the reliability of decisions in terms of statistical estimates of the pessimistic distance to the set of optimal decisions. 
\begin{equation} \label{eq:Mean-Variance}
    \begin{aligned}
        \min_{n, m} \ &\mathbb{E}[\Delta(\check{X}_{N,\rho,\epsilon}(\tilde{\xi}^N), X_{\epsilon}^*)] + \vartheta \text{Var}\left[\Delta(\check{X}_{N,\rho,\epsilon}(\tilde{\xi}^N), X_{\epsilon}^*) \right] \\
        \text{s.t.} \ & (\ref{eq:epsilon optimal solution set:true problem}) \text{ and } (\ref{eq:epsilon-optimal solution set}), a.s.
    \end{aligned}
\end{equation}
Here $\vartheta > 0$ is a parameter which reflects a decision-maker's desire for a choice which is near-optimum, even though it may be difficult (or even impossible) to verify optimality (of the decision) with certainty. Nevertheless, the mean and variance of the pessimistic distance provide a statistical measure of optimality.  The SSN instance \cite{sen1994network} of the SP literature is one such case.

\section{Stochastic Decomposition for Compromise Decision Problem} \label{sec:SD compromise sp}
In the previous two sections, we have discussed the reliability of the compromise decisions for general convex stochastic programs. In this section, we focus on deriving the sample complexity of compromise decisions for two-stage stochastic quadratic programs with quadratic programming recourse (SQQP) \cite{liu2020asymptotic}. In particular, we consider that SD Algorithm is used to (approximately) solve the SQQP problem in the {\sl replication step}. We will later show that the associated compromise decision has higher reliability under certain assumptions. \\
\indent First, we formulate the SQQP problem as follows:
\begin{equation} \label{eq:SQQP}
\begin{aligned}
    \min \ &f(x) = \frac{1}{2} x^\top Q x + c^\top x + \mathbb{E}[h(x,\tilde{\xi})] \\
    \text{s.t.} \ &A x \leq b,
\end{aligned}
\end{equation}
where $b$ and $c$ are $n_1$ dimensional vectors, $A$ is a $m_1 \times n_1$ matrix, $Q$ is a $n_1 \times n_1$ symmetric and positive definite matrix, and $h(x,\xi)$ is the minimum cost of the following second-stage problem:
\begin{subequations} \label{eq:SQQP:subproblem}
\begin{align}
    h(x,\xi) = \min \ &\frac{1}{2} y^\top P y + d^\top y \\
    \text{s.t.} \ &D y = e(\xi) - C(\xi) x, \quad [\lambda], \label{eq:SQQP:subproblem:con1}\\
    & y \geq 0, \quad [\gamma]. \label{eq:SQQP:subproblem:con2}
\end{align}
\end{subequations}
Here, $P$ is a $n_2 \times n_2$ symmetric and positive definite matrix, $d$ is a $n_2$-dimensional vector, $D$ is a $m_2 \times n_2$ matrix, $e(\xi)$ a $m_2$-dimensional random vector that depends on $\xi$, and $C(\xi)$ is a $m_2 \times n_1$ random matrix that also depends on $\xi$. 
We let $\lambda$ and $\gamma$ denote the dual multipliers of constraints (\ref{eq:SQQP:subproblem:con1}) and (\ref{eq:SQQP:subproblem:con2}). \\ 
\indent According to \cite{liu2020asymptotic}, we make the following assumptions: 
\begin{assumption} \label{assumption:part 3}
Let $X$ and $h(x,\xi)$ denote the first-stage feasible region and second-stage recourse function, respectively. 
    \begin{enumerate}
        \item $Q$ and $P$ are symmetric and positive definite matrices. 
        \item The first-stage feasible region $X = \{x \in \mathbb{R}^{n_1}: Ax \leq b\}$ is compact. The set of possible realizations $\Xi$ is compact. 
        \item The second-stage problem satisfies relatively complete recourse condition.  
        \item The second-stage recourse function is uniformly bounded below by 0. 
        \item Linear independence constraint qualification holds at the optimal solution $x^*$ of (\ref{eq:SQQP}). 
        \item There exists $L_h \in (0, \infty)$ such that $|h(x_1,\xi) - h(x_2,\xi)| \leq L_h \|x_1 - x_2 \|$ for almost every $\xi \in \Xi$. 
        \item There exists a neighborhood $B(x^*,\delta)$ with $\delta \in (0, \infty)$, such that $h(x,\xi)$ is differentiable for all $x \in X$ and for almost every $\xi \in \Xi$. 
        \item The unique optimal solution $x^*$ is sharp. 
        \item Strict complementarity holds at $x^*$.
    \end{enumerate}
\end{assumption}
Under the assumptions above, the optimal solution to the SQQP problem (\ref{eq:SQQP}) is unique. Let $x^*$ denoe the optimal solution to (\ref{eq:SQQP}) and let $X^* = \{x^*\}$. For the discussion about the validation of those assumptions, we refer the readers to \cite{liu2020asymptotic}.

%
A sketch of the Stochastic Decomposition algorithm for SQQP is provided below. In Algorithm \ref{alg:SD:SQQP}, minorant generated in the $k^{\text{th}}$ iteration is an inexact lower bound approximation of $\frac{1}{k} \sum_{i=1}^k h(x, \xi_i)$. SD stores previously visited faces of the polyhedral set to accelerate the minorant construction. SD also rescales the previously generated minorants to adapt to the incremental sampling scheme. The incumbent selection rule in SD is used to maintain a sequence of incumbents with certain probabilistic improvement in terms of minimizing the true objective function. Please see \cite[Algorithm 3.3]{liu2020asymptotic} for detailed descriptions of the minorant construction and incumbent selection rule.
\begin{algorithm}[!htbp]
\caption{Stochastic Decomposition for SQQP} \label{alg:SD:SQQP}
\begin{algorithmic}
    \State Initialize $\{\alpha_k\}_k$, $x_0 \in X$, $h_0(x) = 0$, $k = 0$, and $\mathcal{J}_0 = \{0\}$. 
    \State Set $\hat{f}_0(x) = \frac{1}{2} x^\top Q x + c^\top x + \max \{h^k_j(x), j \in \mathcal{J}_0 \}$.
    \While{Stopping criterion is not satisfied}
        \State Let $k \leftarrow k + 1$.
        \State Compute $x_k = \arg \min\limits_{x \in X} \hat{f}_{k-1}(x) + \frac{1}{2 \alpha_k} \|x - \hat{x}^{k-1}\|$. 
        \State Construct $\mathcal{J}_k$ to store active minorants at $x_k$. 
        \State Sample $\xi_k \sim \mathbb{P}_{\tilde{\xi}}$.
        \State Compute dual optimal solutions $(\lambda_k, \gamma_k)$ of subproblem (\ref{eq:SQQP:subproblem}) at $(x_k, \xi_k)$. 
        \State Store visited dual faces for accelerating computing future lower bounds. 
        \State \textsl{Construct minorants}, $h^k_k(x_k,\xi_k)$ and $h^k_{-k}(x_k,\xi_k)$, of $\frac{1}{k} \sum_{i=1}^k h(x, \xi_i)$ at $\hat{x}^{k-1}$ and 
        \State $x^k$, respectively.
        \For{$j \in \mathcal{J}_k$} 
            \State Update minorant $h^k_j(x) \leftarrow \frac{|j|}{k} h^{|j|}_j(x)$
        \EndFor
        \State Update $\mathcal{J}_k \leftarrow \mathcal{J}_k \cup \{-k,k\}$. 
        \State Update $\hat{x}^k$ according to the \textsl{incumbent selection rule}.  
    \EndWhile
\end{algorithmic}
\end{algorithm}

\indent The following theorem due to \cite{liu2020asymptotic} derives the convergence rate of SD algorithm for solving SQQP problems. 
\begin{theorem} \label{thm:convergence rate of SD}
    Suppose that Assumption \ref{assumption:part 3} holds and samples used in SD are i.i.d random variables following the distribution of $\tilde{\xi}$. Let $\{\hat{x}_n\}_n$ denote the sequence of incumbent solutions generated by SD. Choose the initial stepsize $\tau > 0$ so that $\tau \theta_{\min}(Q) > 1$, where $\theta_{\min}(Q)$ is the smallest eigenvalue of $Q$. Set the sequence of step sizes of SD to be $\frac{\tau}{n+1}$. Then there exists a constant $K > 0$ such that for large enough $n$, we have 
    \begin{equation*}
    \mathbb{E}\left[\|\hat{x}_n - x^*\| \right] \leq \frac{K}{n}.
    \end{equation*}
\end{theorem}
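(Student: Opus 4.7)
The plan is to first establish a function-value rate $\mathbb{E}[f(\hat{x}_n) - f(x^*)] = O(1/n)$ for the sequence of SD incumbents, then convert this to the iterate-norm rate $\mathbb{E}[\|\hat{x}_n - x^*\|] = O(1/n)$ by invoking the sharpness of $x^*$ (Assumption \ref{assumption:part 3}.8). Sharpness gives $\|x - x^*\| \leq \mu^{-1}(f(x) - f(x^*))$ on a neighborhood $B(x^*,\delta)$ for some $\mu > 0$. The claimed bound then follows from the decomposition
\begin{equation*}
\mathbb{E}[\|\hat{x}_n - x^*\|] \leq \mu^{-1} \mathbb{E}[f(\hat{x}_n) - f(x^*)] + D_X \Pr\{\hat{x}_n \notin B(x^*,\delta)\},
\end{equation*}
where the tail is controlled by Markov's inequality combined with the strong-convexity bound $\mathbb{E}[\|\hat{x}_n - x^*\|^2] \leq (2/\theta_{\min}(Q)) \mathbb{E}[f(\hat{x}_n) - f(x^*)]$, which is $O(1/n)$ under the same function-value rate. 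This avoids the Jensen pitfall: I never pass from $\mathbb{E}[\|\cdot\|^2]$ to $\mathbb{E}[\|\cdot\|]$ by square-rooting, but instead use sharpness as a linear error bound.

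For the function-value rate, I would build a recursion starting from the proximal optimality condition at iteration $k$. Since $\hat{f}_{k-1}$ inherits $\theta_{\min}(Q)$-strong convexity from the quadratic term $\tfrac12 x^\top Q x$, the subproblem objective is $(\theta_{\min}(Q) + 1/\alpha_k)$-strongly convex, yielding
\begin{equation*}
\tfrac{\theta_{\min}(Q)+1/\alpha_k}{2} \|x_k - x^*\|^2 \leq \hat{f}_{k-1}(x^*) - \hat{f}_{k-1}(x_k) + \tfrac{1}{2\alpha_k}\|\hat{x}^{k-1} - x^*\|^2 - \tfrac{1}{2\alpha_k}\|x_k - \hat{x}^{k-1}\|^2.
\end{equation*}
With $\alpha_k = \tau/(k+1)$, the effective coefficient on $\|\hat{x}^{k-1}-x^*\|^2$ after contraction is $1 - \tau\theta_{\min}(Q)/(k+1+\tau\theta_{\min}(Q))$, and the hypothesis $\tau\theta_{\min}(Q) > 1$ is precisely what guarantees a contraction factor $(1 - c_1/k)$ with $c_1 > 1$, the threshold that distinguishes the $O(1/k)$ rate from the slower $O(\log k/k)$ outcome in the standard recursion lemma.

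Next, I must control $\hat{f}_{k-1}(x^*) - \hat{f}_{k-1}(x_k)$. Because $\hat{f}_{k-1}$ is a lower-bounding minorant of the SAA objective, one has $\hat{f}_{k-1}(x^*) \leq f(x^*) + \epsilon_k^{(1)}$ with $\epsilon_k^{(1)}$ a zero-mean SAA fluctuation, and $\hat{f}_{k-1}(x_k) \geq f(x_k) - \epsilon_k^{(2)}$ where $\epsilon_k^{(2)}$ is the minorant gap at the current iterate. Because SD generates a fresh cut at $x_k$ using the newly drawn $\xi_k$, $\epsilon_k^{(2)}$ decays at the required $O(1/k)$ rate on the incumbent trajectory. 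Combined with the probabilistic incumbent selection rule---which updates $\hat{x}^k$ only when a descent test certifies sufficient progress in conditional expectation---one obtains a Robbins--Siegmund-style recursion $\mathbb{E}[V_k] \leq (1 - c_1/k)\mathbb{E}[V_{k-1}] + c_2/k^2$ for both $V_k = \|\hat{x}^k - x^*\|^2$ and $V_k = f(\hat{x}^k) - f(x^*)$, integrating to $O(1/k)$.

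The principal obstacle is controlling the minorant approximation error under SD's rescaling step $h_j^k \leftarrow (|j|/k) h_j^{|j|}$: rescaling strictly shifts old minorants downward, so $\hat{f}_{k-1}$ is not uniformly accurate, and the error must be argued only along the incumbent trajectory. This is precisely where Assumption \ref{assumption:part 3}.5 (LICQ), .8 (sharpness), and .9 (strict complementarity) enter: once $\hat{x}^k$ enters $B(x^*,\delta)$, these conditions stabilize the active-set structure of the second-stage dual problem, so that the cuts generated near $x^*$ approximate $h(\cdot,\xi_k)$ at an $O(1/k)$ rate rather than the $O(1/\sqrt{k})$ rate typical of arbitrary SAA estimates. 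Verifying this ``active-set stabilization'' step, and ensuring it propagates through the rescaling, is the technical heart of the argument.
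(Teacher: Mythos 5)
The paper does not prove this theorem internally: its entire proof is the citation ``See \cite[Theorem 4.9]{liu2020asymptotic}'', and the only additional in-paper material is the appendix on SQQP properties, which unpacks the constant $K$ as a multiple of $M_0 M_1$ with a denominator $(\Gamma-1)$. So there is no in-paper argument to compare yours against line by line. That said, your outline does track the architecture of the cited proof: sharpness is indeed used as a linear error bound to pass from a function-value (or squared-distance) rate to $\mathbb{E}[\|\hat{x}_n - x^*\|] = O(1/n)$ without the Jensen loss, the hypothesis $\tau\theta_{\min}(Q) > 1$ is exactly the threshold that makes a contraction factor of the form $(1 - c_1/k)$ with $c_1 > 1$ integrate to $O(1/k)$ rather than $O(\log k / k)$, and LICQ, strict complementarity, and sharpness are invoked to stabilize the second-stage active sets near $x^*$.

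As a proof, however, your proposal has a genuine gap, and you have correctly located it yourself: the bound $\hat{f}_{k-1}(x_k) \geq f(x_k) - \epsilon_k^{(2)}$ with $\epsilon_k^{(2)} = O(1/k)$ is asserted, not established. Since $\hat{f}_{k-1}$ is only a minorant of the current sample-average objective, and the rescaling $h^k_j \leftarrow (|j|/k)\, h^{|j|}_j$ pushes old cuts strictly downward, there is no a priori reason the minorant gap at $x_k$ decays at the required rate; showing that the dual faces visited near $x^*$ eventually contain the optimal face (so the cuts become exact up to the SAA fluctuation) is the content of the supporting propositions in the reference, and it must also handle the dependence of $x_k$ on the sample path, which breaks the ``zero-mean fluctuation'' argument that is valid only at the fixed point $x^*$. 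A smaller imprecision in the same vein: $\hat{f}_{k-1}(x^*) \leq f_{k-1}(x^*)$ bounds the minorant by the SAA value, whose deviation from $f(x^*)$ is zero-mean and therefore harmless in expectation, but the analogous step at the random point $x_k$ is not zero-mean, which is precisely why the active-set stabilization argument is unavoidable rather than a finishing touch. As submitted, the proposal is a correct roadmap whose crux is left open.
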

\begin{proof}
    See \cite[Theorem 4.9]{liu2020asymptotic}. 
\end{proof}
We leave a detailed explanation of the constant $K$ in Appendix \ref{appendix sqqp}. In short, the constant $K$ depends on the Lipschitz modulus of the objective function as well as the Hessians of the first-stage objective and the quadratic pieces of the second-stage recourse function. \\
\indent By Markov's inequality, given $t \in (0, \infty)$, we can further get the following probabilistic bound from Theorem \ref{thm:convergence rate of SD}:
\begin{equation} \label{eq:sd:markov inequality}
    \Pr \left\{\|\hat{x}_n - x^*\| \geq t \right\} \leq \frac{\mathbb{E}[\|\hat{x}_n - x^*\|]}{t} \leq \frac{K}{tn}.
\end{equation}
We let $\hat{x}^j_n$ denote the incumbent decision generated by SD in the $j^\text{th}$ replication and let $\hat{f}^j_n$ denote the local approximation of $f_n^j$ also generated by SD. Let $L_f = \max\limits_{x \in X} \|Qx\| + \|c\| + L_h$. Then both $f(x)$ and $\{\hat{f}^i_n\}_i$ are Lipschitiz continuous with a constant $L_f$. Hence, we observe that with probability at least $(1 - \frac{K}{tn})$, 
\begin{equation} \label{eq:SD:inequality}
    f(\hat{x}_n) - f(x^*) \leq L_f \|\hat{x}_n - x^*\|.
\end{equation}
Combining (\ref{eq:sd:markov inequality}) and (\ref{eq:SD:inequality}), we observed that $\hat{x}_n$ is a $L_f t$-optimal solution of problem (\ref{eq:SQQP}) with probability at least $(1 - \frac{K}{tn})$.

\indent Compared with cutting-plane-type methods mentioned in section \ref{sec:algorithms compromise sp}, which creates a global approximation of the SAA objective function in each replication, SD only creates a local approximation of the counterpart. As a result, the function augmentation approach proposed in Section \ref{sec:algorithms compromise sp} is no longer valid here. Alternatively, we augment the SD's function approximation by including a lower bound estimate of the optimal cost of (\ref{eq:SQQP}). The following explains how we augment the function estimate by SD. First of all, we pick a tolerance constant $\epsilon' > 0$. Then for $i = 1,2\ldots,m$, we augment $\hat{f}^i_n$ by letting 
\begin{equation} \label{eq:SD augmented function}
    \check{f}^i_n(x) = \max \{\hat{f}^i_n(x), \hat{f}^i_n(\hat{x}^i_n) - \epsilon' \},
\end{equation}
where $\hat{f}^i_n(\hat{x}^i_n) - \epsilon'$ provides the estimate of the lower bound of the optimal cost while ensuring the stability of the later aggregation of the function estimates. \\
\indent The augmentation of the function estimate ensures that the incumbent solution from the $i^{\text{th}}$ replication, $\hat{x}^i_n$, is the $\epsilon$-optimal solution of the problem $\min_{x \in X} \check{f}^i_n(x)$, which further helps bound the later-introduced pessimistic distance. Note that
\begin{equation*}
    \check{f}^i_n(x) \geq \hat{f}^i_n(\hat{x}^i_n) - \epsilon', \ \forall x \in X, 
\end{equation*}
and 
\begin{equation*}
    \check{f}^i_n(\hat{x}^i_n) = \hat{f}^i_n(\hat{x}^i_n).
\end{equation*}
Hence, we have
\begin{equation} \label{eq:sd:lower bound}
    \check{f}^i_n(\hat{x}^i_n) - \min_{x \in X} \check{f}^i_n(x) \leq \epsilon'.
\end{equation}
\indent We introduce the following condition on the estimated solution of (\ref{eq:SQQP}) as an intermediate piece to derive the probabilistic bound of the reliability of the compromise decision. At the end of this section, we will derive the probability that such an intermediate condition holds. 
\begin{condition}[Intermediate Condition] \label{condition:intermediate condition}
    Given $t > 0$, we have $\|\hat{x}^j_n - x^*\| \leq t, \ \forall j \in \{ 1, \ldots, m \}$, where $x^*$ is the unique optimal solution of problem (\ref{eq:SQQP}).
\end{condition}
\indent With the incumbent solution and augmented location function approximation of each replication introduced, we formulate the \textsl{SD-augmented} compromise problem below:
\begin{equation} \label{eq:compromise SD}
    \min_{x \in X} \frac{1}{m} \sum_{i=1}^m \check{f}^i_n(x) + \frac{\rho}{2} \left\|\frac{1}{m} \sum_{j=1}^m \hat{x}_n^i- x \right\|^2.
\end{equation}
We let $\check{X}_{N,\rho,\epsilon}$ denote the $\epsilon$-optimal solution set of the \textsl{SD-augmented} compromise decision problem (\ref{eq:compromise SD}). 
The aggregated problem without the regularizer is written below:
\begin{equation} \label{eq:aggregated SD}
    \min_{x \in X} \frac{1}{m} \sum_{i=1}^m \check{f}^i_n(x).
\end{equation}
We let $\check{X}_{N,\epsilon}$ denote the  $\epsilon$-optimal solution set of the problem (\ref{eq:aggregated SD}). \\
\indent We start with deriving the upper bound of the pessimistic distance of $\check{X}_{N,\rho,\epsilon}$ to $X^*$. 
\begin{lemma} \label{lemma:bound 0 of SD for compromise SD}
The following relation holds:
    \begin{equation*}
        \Delta(\check{X}_{N,\rho,\epsilon}, X^*) \leq \left\|\frac{1}{m} \sum_{i=1}^m \hat{x}_n^i - x^* \right\| + 2 \sqrt{\epsilon / \rho} +  \inf_{x \in \check{X}_{N,\epsilon}} \left\| \frac{1}{m} \sum_{j=1}^m \hat{x}_n^i- x \right\|
    \end{equation*}
\end{lemma}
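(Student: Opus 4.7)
The plan is to mirror the strategy of Lemma~\ref{lemma:compromise:relation2}, exploiting the fact that Assumption~\ref{assumption:part 3} forces $X^{*}=\{x^{*}\}$ to be a singleton, so that $\inf_{x'\in X^{*}}\|\hat{x}-x'\| = \|\hat{x}-x^{*}\|$ and the pessimistic distance reduces to a supremum of $\|\hat{x}-x^{*}\|$ over $\hat{x}\in\check{X}_{N,\rho,\epsilon}$. I would fix an arbitrary $\hat{x}\in\check{X}_{N,\rho,\epsilon}$ and introduce the average $\bar{x}\triangleq\frac{1}{m}\sum_{i=1}^{m}\hat{x}_{n}^{i}$, then split by triangle inequality,
\begin{equation*}
\|\hat{x}-x^{*}\| \ \leq\ \|\hat{x}-\bar{x}\| + \|\bar{x}-x^{*}\|.
\end{equation*}
The first summand already matches the target; the task is to show $\|\hat{x}-\bar{x}\|\leq 2\sqrt{\epsilon/\rho}+\|x'-\bar{x}\|$ for every $x'\in\check{X}_{N,\epsilon}$, after which taking an infimum over $x'$ and then a supremum over $\hat{x}$ yields the claim.

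To obtain that inequality, I would play the two $\epsilon$-optimality conditions against each other. Let $\check{\theta}_{N,\rho}$ and $\check{\theta}_{N}$ denote the optimal values of~(\ref{eq:compromise SD}) and~(\ref{eq:aggregated SD}), respectively. Since $\hat{x}$ is $\epsilon$-optimal for the regularized problem and $x'\in\check{X}_{N,\epsilon}$ is feasible for that problem,
\begin{equation*}
\frac{1}{m}\sum_{i=1}^{m}\check{f}_{n}^{i}(\hat{x}) + \frac{\rho}{2}\|\hat{x}-\bar{x}\|^{2} \ \leq\ \check{\theta}_{N,\rho}+\epsilon \ \leq\ \frac{1}{m}\sum_{i=1}^{m}\check{f}_{n}^{i}(x') + \frac{\rho}{2}\|x'-\bar{x}\|^{2}+\epsilon.
\end{equation*}
Using $\frac{1}{m}\sum \check{f}_{n}^{i}(\hat{x})\geq \check{\theta}_{N}$ on the left and $\frac{1}{m}\sum \check{f}_{n}^{i}(x')\leq \check{\theta}_{N}+\epsilon$ on the right, the aggregated value cancels and I obtain $\frac{\rho}{2}\|\hat{x}-\bar{x}\|^{2}\leq \frac{\rho}{2}\|x'-\bar{x}\|^{2}+2\epsilon$, i.e.\ $\|\hat{x}-\bar{x}\|^{2}\leq \|x'-\bar{x}\|^{2}+4\epsilon/\rho$. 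Applying $\sqrt{a+b}\leq\sqrt{a}+\sqrt{b}$ gives
\begin{equation*}
\|\hat{x}-\bar{x}\| \ \leq\ \|x'-\bar{x}\| + 2\sqrt{\epsilon/\rho}.
\end{equation*}

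Finally I take the infimum of the right side over $x'\in\check{X}_{N,\epsilon}$, reinsert into the triangle-inequality decomposition, and take the supremum over $\hat{x}\in\check{X}_{N,\rho,\epsilon}$. The bound on $\|\bar{x}-x^{*}\|$ does not depend on $\hat{x}$, so it carries through unchanged, producing the stated inequality. The only delicate step is the pairing of the two $\epsilon$-optimality statements: one must be careful to use the $\epsilon$-optimality of $x'$ for the unregularized aggregated problem (not the regularized one) so that the intermediate value $\check{\theta}_{N}$ cancels and only a single $\rho$-free factor of $2\epsilon$ remains. Everything else is the Euclidean triangle inequality and elementary subadditivity of the square root.
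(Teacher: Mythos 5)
Your proposal is correct and follows essentially the same route as the paper: the paper's proof also reduces the pessimistic distance to $\sup\|\hat{x}-x^*\|$ via the singleton $X^*$, invokes the same pairing of the two $\epsilon$-optimality conditions (the argument of Lemma \ref{lemma:compromise:relation}) to get $\|\hat{x}-\bar{x}\|\leq\|x'-\bar{x}\|+2\sqrt{\epsilon/\rho}$, and finishes with the triangle inequality, an infimum over $\check{X}_{N,\epsilon}$, and a supremum over $\check{X}_{N,\rho,\epsilon}$. Your write-up merely makes explicit the cancellation step that the paper delegates to the earlier lemma.
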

\begin{proof}
    Since $X^* = \{x^*\}$, we have $\Delta(\check{X}_{N,\rho,\epsilon}, X^*) = \sup_{x \in \check{X}_{N,\rho,\epsilon}} \|x - x^*\|$. 
    By following the same proof technique of Lemma \ref{lemma:compromise:relation} (see Appendix \ref{appendix:proof of lemma:compromise:relation}), we can get 
    \begin{equation} \label{eq:bound 0 of SD for compromise SD:01}
        \left\|\check{x}_{N,\rho,\epsilon} - \frac{1}{m} \sum_{i=1}^m \hat{x}_n^i \right\| \leq \left\|\check{x}_{N,\epsilon} - \frac{1}{m} \sum_{i=1}^m \hat{x}_n^i \right\| + 2 \sqrt{\epsilon / \rho}.
    \end{equation}
    Hence, inequality (\ref{eq:bound 0 of SD for compromise SD:01}) implies that 
    \begin{equation} \label{eq:bound 0 of SD for compromise SD:02}
        \left\|\check{x}_{N,\rho,\epsilon} - x^* \right\| \leq \left\|\check{x}_{N,\epsilon} - \frac{1}{m} \sum_{i=1}^m \hat{x}_n^i \right\| + \left\|\frac{1}{m} \sum_{i=1}^m \hat{x}_n^i - x^* \right\| + 2 \sqrt{\epsilon / \rho}
    \end{equation}
    We take the infimum of the right-hand side of (\ref{eq:bound 0 of SD for compromise SD:02}) over $\check{x}_{N,\epsilon} \in \check{X}_{N,\epsilon}$, and get 
    \begin{equation} \label{eq:bound 0 of SD for compromise SD:03}
        \|\check{x}_{N,\rho,\epsilon} - x^*\| \leq \left\|\frac{1}{m} \sum_{i=1}^m \hat{x}_n^i - x^* \right\| + 2 \sqrt{\epsilon / \rho} + \inf_{x \in \check{x}_{N,\epsilon}} \left\|\frac{1}{m} \sum_{i=1}^m \hat{x}_n^i - x \right\| 
    \end{equation}
    The final results follow by taking the supremum of the left-hand-side of (\ref{eq:bound 0 of SD for compromise SD:03}) over $\check{x}_{N,\rho,\epsilon} \in \check{X}_{N,\rho,\epsilon}$. 
\end{proof}
By the construction of augmented local function approximation $\check{f}_n^i$ in (\ref{eq:SD augmented function}), we can derive the upper bound of $\inf_{x \in \check{X}_{N,\epsilon}} \left\| \frac{1}{m} \sum_{i=1}^m \hat{x}^i_{n} - x \right\|$.
\begin{lemma} \label{lemma:bound 2 of SD for compromise sp}
    In the $i^{\text{th}}$ iteration ($i \in \{1,2,\ldots,m\}$), let $\hat{x}^i_n$ be the incumbent generated by the SD (Algorithm \ref{alg:SD:SQQP}). 
    Suppose that Condition \ref{condition:intermediate condition} holds. If $\epsilon \leq 2 L_f t + \epsilon'$, then the following relation holds:
    \begin{equation*}
     \inf_{x \in \check{X}_{N,\epsilon}} \left\| \frac{1}{m} \sum_{i=1}^m \hat{x}^i_{n} - x \right\| \leq \frac{2 L_f t + \epsilon' - \epsilon}{\epsilon} D_X
    \end{equation*} 
\end{lemma}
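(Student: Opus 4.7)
The plan is to follow the same two-step template that was used for Lemmas \ref{lemma:compromise:relation3} and \ref{lemma:bound 2 of algorithm for compromise sp}: first exhibit $\bar{x} := \tfrac{1}{m}\sum_{j=1}^m \hat{x}_n^j$ as an $\epsilon''$-optimal solution of the aggregated problem (\ref{eq:aggregated SD}) with $\epsilon'' = 2L_f t + \epsilon'$, and then apply Theorem \ref{theorem:lipschitz continuity of solution set} (Lipschitzian behavior of the $\epsilon$-optimal solution set) to turn the $\epsilon''$-optimality into the claimed bound on the pessimistic distance $\inf_{x \in \check{X}_{N,\epsilon}}\|\bar{x} - x\|$.

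For the first step, I would bound $\tfrac{1}{m}\sum_{i=1}^m \check{f}_n^i(\bar{x})$ both from above and from below. The easy lower bound is that, by the augmentation in (\ref{eq:SD augmented function}), $\check{f}_n^i(x) \geq \hat{f}_n^i(\hat{x}_n^i) - \epsilon'$ for every $x \in X$, so averaging and minimizing over $X$ gives $\min_{x \in X} \tfrac{1}{m}\sum_{i=1}^m \check{f}_n^i(x) \geq \tfrac{1}{m}\sum_{i=1}^m \hat{f}_n^i(\hat{x}_n^i) - \epsilon'$. For the upper bound at $\bar{x}$, I would use Lipschitz continuity: since $\check{f}_n^i$ is the pointwise maximum of $\hat{f}_n^i$ (which is $L_f$-Lipschitz) and a constant, it is itself $L_f$-Lipschitz, hence $\check{f}_n^i(\bar{x}) \leq \check{f}_n^i(\hat{x}_n^i) + L_f \|\bar{x} - \hat{x}_n^i\| = \hat{f}_n^i(\hat{x}_n^i) + L_f \|\bar{x} - \hat{x}_n^i\|$, where the equality is immediate from (\ref{eq:SD augmented function}).

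To control the distances $\|\bar{x} - \hat{x}_n^i\|$, I would invoke Condition \ref{condition:intermediate condition}, which gives $\|\hat{x}_n^j - x^*\| \leq t$ for every $j$. Convexity of the norm yields $\|\bar{x} - x^*\| \leq \tfrac{1}{m}\sum_j \|\hat{x}_n^j - x^*\| \leq t$, so by the triangle inequality $\|\bar{x} - \hat{x}_n^i\| \leq 2t$ for every $i$. Combining the upper and lower bounds gives $\tfrac{1}{m}\sum_{i=1}^m \check{f}_n^i(\bar{x}) - \min_{x \in X} \tfrac{1}{m}\sum_{i=1}^m \check{f}_n^i(x) \leq 2L_f t + \epsilon'$, so $\bar{x} \in \check{X}_{N,\,2L_f t + \epsilon'}$.

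The second step is immediate: under the hypothesis $\epsilon \leq 2L_f t + \epsilon'$, Theorem \ref{theorem:lipschitz continuity of solution set} applied to the convex function $\tfrac{1}{m}\sum_{i=1}^m \check{f}_n^i(x)$ on $X$ gives $\Delta(\check{X}_{N,\,2L_f t + \epsilon'},\, \check{X}_{N,\epsilon}) \leq \tfrac{(2L_f t + \epsilon') - \epsilon}{\epsilon} D_X$, and since $\bar{x}$ lies in the larger set, $\inf_{x \in \check{X}_{N,\epsilon}} \|\bar{x} - x\|$ is bounded by this pessimistic distance. The main conceptual obstacle is the ``cross-evaluation'' issue: because SD only delivers a local approximation $\hat{f}_n^i$ around its own incumbent $\hat{x}_n^i$, one cannot exploit convexity of the approximations at foreign incumbents as in the cutting-plane case of Lemma \ref{lemma:bound 2 of algorithm for compromise sp}. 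Condition \ref{condition:intermediate condition} resolves this by ensuring the incumbents cluster within a ball of radius $2t$, so the Lipschitz constant $L_f$ is enough to transport the value of $\check{f}_n^i$ from $\hat{x}_n^i$ to $\bar{x}$ with controlled error.
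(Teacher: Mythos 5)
Your proposal is correct and follows essentially the same route as the paper: show that $\frac{1}{m}\sum_j \hat{x}_n^j$ is a $(2L_f t + \epsilon')$-optimal solution of the aggregated problem (\ref{eq:aggregated SD}) by combining the $L_f$-Lipschitz transport of $\check{f}_n^i$ over the $2t$-ball guaranteed by Condition \ref{condition:intermediate condition} with the $\epsilon'$-suboptimality bound (\ref{eq:sd:lower bound}) coming from the augmentation (\ref{eq:SD augmented function}), then invoke Theorem \ref{theorem:lipschitz continuity of solution set}. The only cosmetic difference is that you bound $\|\bar{x}-\hat{x}_n^i\|$ via the triangle inequality through $x^*$ rather than through pairwise incumbent distances; both give $2t$.
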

\proof
This proof consists of two steps. In the first step, we will show that $\frac{1}{m} \sum_{i=1}^m \hat{x}^i_n$ is a $(2 L_f t + \epsilon')$-optimal solution of the aggregated problem in (\ref{eq:aggregated SD}). In the second step, we can use Theorem \ref{theorem:lipschitz continuity of solution set} to obtain the upper bound of its associated pessimistic distance. \\
\indent Here is the detailed proof. By Condition \ref{condition:intermediate condition}, for $j \in \{1,\ldots,m\}$, we have 
\begin{equation*}
\begin{aligned}
    \left\|\frac{1}{m} \sum_{i=1}^m \hat{x}^i_n - \hat{x}^j_n \right\| &\leq \frac{1}{m}\sum_{i=1}^m \left\|\hat{x}^i_n - \hat{x}^j_n \right\| \\
    &\leq \frac{1}{m}\sum_{i=1}^m \left(\|\hat{x}^i_n - x^*\| + \|\hat{x}^j_n - x^*\| \right) \\
    &\leq 2 t,
\end{aligned}
\end{equation*}
which implies that 
\begin{equation} \label{eq:lemma:bound 2 of algorithm for compromise sp00}
    \left|\check{f}_n^j(\frac{1}{m} \sum_{i=1}^m \hat{x}^i_n) - \check{f}_n^j(\hat{x}^j_n) \right| \leq 2 L_f t.
\end{equation}
Let $y \in \arg \min\limits_{x \in X} \frac{1}{m} \sum_{i=1}^m \check{f}^k_i(x)$ and let $\check{\theta}_N = \min_{x \in X} \frac{1}{m} \sum_{i=1}^m \check{f}_n^i(x)$, then we have 
\begin{equation} \label{eq:lemma:bound 2 of algorithm for compromise sp01}
    \begin{aligned}
        \frac{1}{m} \sum_{i=1}^m \check{f}_n^i \left(\frac{1}{m} \sum_{j=1}^m \hat{x}^j_n \right) &\leq \frac{1}{m} \sum_{i=1}^m \check{f}_n^i(\hat{x}_n^i) + 2 L_f t \quad \text{ by (\ref{eq:lemma:bound 2 of algorithm for compromise sp00})}\\
        &\leq \frac{1}{m} \sum_{i=1}^m \left[\check{f}_n^i(\hat{x}_n^i) - \check{f}_n^i(y) + \check{f}_n^i(y) \right] + 2 L_f t \\
        &\leq \check{\theta}_N + 2 L_f t + \epsilon' \quad \text{ by (\ref{eq:sd:lower bound})}.
    \end{aligned}
\end{equation}
The inequality above implies that $\frac{1}{m} \sum_{j=1}^m \hat{x}^j_n$ is the $(2 L_f t + \epsilon')$-optimal solution of the problem 
$$
\min\limits_{x \in X} \frac{1}{m} \sum_{i=1}^m \check{f}_n^i(x_n^i).
$$
Then the final result follows by Theorem \ref{theorem:lipschitz continuity of solution set}.
\endproof
In the following theorem below, we aim to derive the lower bound of probability that Condition \ref{condition:intermediate condition} holds and provide the probabilistic bound of $\Delta(\check{X}_{N,\rho,\epsilon}, X^*) \leq \frac{2 L_f t  + \epsilon' - \epsilon}{\epsilon} D_X + t + 2 \sqrt{\epsilon / \rho}$. 
\begin{theorem} \label{thm:rate of convergence in compromise SD}
    Suppose that Assumptions \ref{assumption:part 2(2)} and \ref{assumption:part 3} hold. If $\epsilon \leq 2L_f t + \epsilon'$, then the following holds: 
    \begin{equation*}
        \Pr \left\{ \Delta(\check{X}_{N,\rho,\epsilon}, X^*) \leq \frac{2 L_f t  + \epsilon' - \epsilon}{\epsilon} D_X + t + 2 \sqrt{\epsilon / \rho} \right\} \geq \exp \left( - \frac{\frac{mK}{tn}}{1 - \frac{K}{tn}}\right)
    \end{equation*}
\end{theorem}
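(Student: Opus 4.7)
The plan is to combine the two preceding lemmas and chain them with the high-probability guarantee on SD's incumbent. Specifically, suppose for the moment that Condition \ref{condition:intermediate condition} holds, i.e.\ $\|\hat{x}^j_n - x^*\| \le t$ for every $j = 1,\ldots,m$. Then by the triangle inequality applied to the average of incumbents,
\[
\Bigl\|\tfrac{1}{m}\sum_{i=1}^m \hat{x}^i_n - x^*\Bigr\| \le \tfrac{1}{m}\sum_{i=1}^m \|\hat{x}^i_n - x^*\| \le t.
\]
Lemma \ref{lemma:bound 2 of SD for compromise sp} then bounds the infimum term by $\frac{2L_f t + \epsilon' - \epsilon}{\epsilon} D_X$, provided $\epsilon \le 2L_f t + \epsilon'$ (which is our hypothesis). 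Feeding both bounds into Lemma \ref{lemma:bound 0 of SD for compromise SD} yields
\[
\Delta(\check{X}_{N,\rho,\epsilon}, X^*) \le \frac{2L_f t + \epsilon' - \epsilon}{\epsilon} D_X + t + 2\sqrt{\epsilon/\rho},
\]
which is exactly the event inside the probability of the theorem. So it suffices to lower-bound the probability that Condition \ref{condition:intermediate condition} holds.

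For that, I use two ingredients. First, the Markov-type bound (\ref{eq:sd:markov inequality}) coming from Theorem \ref{thm:convergence rate of SD} gives $\Pr\{\|\hat{x}_n - x^*\| \le t\} \ge 1 - \frac{K}{tn}$ for each single replication. Second, Assumption \ref{assumption:part 2(2)} guarantees that the sample batches $\tilde{\xi}^n_1, \ldots, \tilde{\xi}^n_m$ are mutually independent; since SD in replication $i$ depends only on $\tilde{\xi}^n_i$, the events $\{\|\hat{x}^i_n - x^*\| \le t\}$ are independent across $i$. Hence
\[
\Pr\{\text{Condition \ref{condition:intermediate condition}}\} \ge \Bigl(1 - \tfrac{K}{tn}\Bigr)^{\!m}.
\]

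The only remaining task is to show that $(1-a)^m \ge \exp\!\bigl(-\tfrac{ma}{1-a}\bigr)$ for $a = K/(tn) \in (0,1)$, which matches the right-hand side of the claimed probability bound. Taking logarithms, this reduces to $-\log(1-a) \le a/(1-a)$. This follows immediately from a termwise comparison of the power series:
\[
-\log(1-a) = \sum_{k=1}^\infty \frac{a^k}{k} \;\le\; \sum_{k=1}^\infty a^k = \frac{a}{1-a}.
\]
Combining the two lower bounds yields the stated inequality.

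The step I expect to require the most care is verifying that the independence hypothesis really transfers from the sample batches to the incumbents $\hat{x}^i_n$; this is essentially bookkeeping but it is the linchpin that lets the single-replication Markov bound tensorize into the product $(1 - K/(tn))^m$. The logarithmic inequality at the end is routine, but it is worth noting that this is the tightest elementary lower bound one gets: the exponent $\tfrac{ma}{1-a}$ is sharper than the naive $ma$ one would obtain from $1 - a \le e^{-a}$, and this sharper form is what appears in the theorem statement.
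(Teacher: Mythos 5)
Your proposal is correct and follows essentially the same route as the paper's proof: condition on the event $\|\hat{x}^i_n - x^*\| < t$ for all $i$, bound its probability by $(1 - K/(tn))^m \geq \exp\left(-\frac{mK/(tn)}{1 - K/(tn)}\right)$ using the per-replication Markov bound, independence across replications, and the elementary inequality $1-y \geq \exp(-y/(1-y))$, and then invoke Lemmas \ref{lemma:bound 0 of SD for compromise SD} and \ref{lemma:bound 2 of SD for compromise sp} to obtain the deterministic bound on the pessimistic distance. Your explicit power-series verification of the logarithmic inequality and the explicit remark on the independence transfer from sample batches to incumbents are just elaborations of steps the paper treats as immediate.
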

\begin{proof}
First, we note that $1 - y \geq \exp(-\frac{y}{1 - y})$ for $0\leq y <1$. By (\ref{eq:sd:markov inequality}), for $n > \frac{K}{t}$, we first note that 
\begin{equation} \label{eq:thm:rate of convergence in compromise SD:01}
\begin{aligned}
    \Pr \left\{ \|\hat{x}_n^i - x^*\| < t, \ \forall i \in \{1,\ldots,m\} \right\} &\geq \left(1 - \frac{K}{tn} \right)^m  \\
    &\geq \exp \left( - \frac{\frac{mK}{tn}}{1 - \frac{K}{tn}}\right)
\end{aligned}
\end{equation}
Given $ \|\hat{x}_n^i - x^*\| < t, \ \forall i \in \{1,\ldots,m\}$, it follows from Lemmas \ref{lemma:bound 0 of SD for compromise SD} and \ref{lemma:bound 2 of SD for compromise sp} that, 
\begin{equation} \label{eq:thm:rate of convergence in compromise SD:02}
    \begin{aligned}
        \Delta(\check{X}_{N,\rho,\epsilon}, X^*) &\leq \inf_{x \in \check{X}_{N,\epsilon}} \left\| \frac{1}{m} \sum_{j=1}^m \hat{x}^j_{n} - x \right\| + \left\|\frac{1}{m} \sum_{j=1}^m \hat{x}^j_{n} - x^* \right\|+ 2 \sqrt{\epsilon/\rho} \\
        &\leq \frac{2 L_f t  + \epsilon' - \epsilon}{\epsilon} D_X + t + 2 \sqrt{\epsilon/\rho} 
    \end{aligned}
\end{equation}
Thus, the final result follows from combining (\ref{eq:thm:rate of convergence in compromise SD:01}) and (\ref{eq:thm:rate of convergence in compromise SD:02}).
\end{proof}
\textbf{Remark:} We observe that there exists some freedom to choose parameters in Theorem \ref{thm:rate of convergence in compromise SD}. We now let $t \in (0,1)$, $\alpha \in (0,3)$, $\epsilon' = L_f t$, $\rho = \frac{(3 - \alpha) L_f}{t}$,  $\epsilon = (3 - \alpha) L_f t$.  Then it follows from Theorem \ref{thm:rate of convergence in compromise SD} that 
\begin{equation*}
    \Pr \left\{ \Delta(\check{X}_{N,\rho,\epsilon}, X^*) \leq \frac{\alpha}{3 - \alpha} D_X + 3 t \right\} \geq \exp \left( - \frac{\frac{mK}{tn}}{1 - \frac{K}{tn}}\right),
\end{equation*}
 where $\exp \left( - \frac{\frac{mK}{tn}}{1 - \frac{K}{tn}}\right)$ goes to 1 as $n \rightarrow \infty$. 

 In the next result, we aim to study the compromise decision's impact on variance reduction. We let $d_n^i = \|\hat{x}^i_n - x^*\|$, for $i = 1,2,\ldots,m$. We start with bounding $\inf_{x \in \check{X}_{N,\epsilon}} \left\| \frac{1}{m} \sum_{j=1}^m \hat{x}^j_{n} - x \right\|$ using $d_n^i$ in the lemma below. 
\begin{lemma} \label{lemma:bound 3 of SD for compromise sp}
    Suppose that Assumptions \ref{assumption:part 2(2)} and \ref{assumption:part 3} hold. If $\epsilon \leq \epsilon'$, then the following relation holds:
    \begin{equation*}
     \inf_{x \in \check{X}_{N,\epsilon}} \left\| \frac{1}{m} \sum_{j=1}^m \hat{x}^j_{n} - x \right\| \leq \frac{\epsilon' - \epsilon + \frac{2 L_f}{m} \sum_{i=1}^m d_n^i}{\epsilon} D_X
    \end{equation*} 
\end{lemma}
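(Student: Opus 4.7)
The plan is to mirror the two-step strategy used in the proof of Lemma \ref{lemma:bound 2 of SD for compromise sp}, replacing the uniform bound $t$ from Condition \ref{condition:intermediate condition} with the sample-dependent distances $d_n^i = \|\hat{x}_n^i - x^*\|$. In Step 1, I will show that the aggregated candidate $\frac{1}{m}\sum_{i=1}^m \hat{x}_n^i$ is an $\epsilon''$-optimal solution of the aggregated problem in (\ref{eq:aggregated SD}) with $\epsilon'' = \epsilon' + \frac{2L_f}{m}\sum_{i=1}^m d_n^i$. In Step 2, I will invoke Theorem \ref{theorem:lipschitz continuity of solution set} to convert this $\epsilon''$-optimality into the desired upper bound on the pessimistic distance.

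For Step 1, the key observation is that, by the triangle inequality applied through $x^*$,
\begin{equation*}
\Bigl\| \tfrac{1}{m}\sum_{i=1}^m \hat{x}_n^i - \hat{x}_n^j \Bigr\| \;\leq\; \tfrac{1}{m}\sum_{i=1}^m d_n^i + d_n^j,
\end{equation*}
which, after averaging over $j$, gives $\frac{1}{m}\sum_{j=1}^m \| \frac{1}{m}\sum_{i=1}^m \hat{x}_n^i - \hat{x}_n^j\| \leq \frac{2}{m}\sum_{i=1}^m d_n^i$. Since each $\hat{f}_n^j$ is $L_f$-Lipschitz and $\check{f}_n^j(\cdot) = \max\{\hat{f}_n^j(\cdot), \hat{f}_n^j(\hat{x}_n^j)-\epsilon'\}$ inherits the same Lipschitz modulus, I can conclude
\begin{equation*}
\tfrac{1}{m}\sum_{j=1}^m \check{f}_n^j\!\Bigl(\tfrac{1}{m}\sum_{i=1}^m \hat{x}_n^i\Bigr) \;\leq\; \tfrac{1}{m}\sum_{j=1}^m \check{f}_n^j(\hat{x}_n^j) + \tfrac{2L_f}{m}\sum_{i=1}^m d_n^i.
\end{equation*}
Combining this with (\ref{eq:sd:lower bound}), namely $\check{f}_n^j(\hat{x}_n^j) \leq \min_{x\in X} \check{f}_n^j(x) + \epsilon'$ applied at the minimizer $y$ of $\frac{1}{m}\sum_{j=1}^m \check{f}_n^j$, yields
\begin{equation*}
\tfrac{1}{m}\sum_{j=1}^m \check{f}_n^j\!\Bigl(\tfrac{1}{m}\sum_{i=1}^m \hat{x}_n^i\Bigr) \;\leq\; \check{\theta}_N + \epsilon' + \tfrac{2L_f}{m}\sum_{i=1}^m d_n^i,
\end{equation*}
which is the $\epsilon''$-optimality claim.

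Step 2 is then a direct application of Theorem \ref{theorem:lipschitz continuity of solution set}: since $\frac{1}{m}\sum_{i=1}^m \hat{x}_n^i \in \check{X}_{N,\epsilon''}$ and $\epsilon \leq \epsilon' \leq \epsilon''$, we have
\begin{equation*}
\inf_{x \in \check{X}_{N,\epsilon}} \Bigl\| \tfrac{1}{m}\sum_{j=1}^m \hat{x}_n^j - x \Bigr\| \;\leq\; \Delta\bigl(\check{X}_{N,\epsilon''}, \check{X}_{N,\epsilon}\bigr) \;\leq\; \frac{\epsilon'' - \epsilon}{\epsilon} D_X \;=\; \frac{\epsilon' - \epsilon + \frac{2L_f}{m}\sum_{i=1}^m d_n^i}{\epsilon}\, D_X,
\end{equation*}
which is exactly the claim.

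I do not anticipate a major obstacle: the argument is a near-verbatim rerun of Lemma \ref{lemma:bound 2 of SD for compromise sp}, with $t$ replaced by the per-replication bound $d_n^i$. The only subtle point is making sure the augmented functions $\check{f}_n^j$ really inherit the Lipschitz constant $L_f$ (which they do because pointwise maxima of $L_f$-Lipschitz functions and constants are $L_f$-Lipschitz) and ensuring the hypothesis $\epsilon \leq \epsilon'$ of the lemma is enough to apply Theorem \ref{theorem:lipschitz continuity of solution set} (which it is, since $\epsilon \leq \epsilon' \leq \epsilon''$). The remaining statistical consequences, such as plugging Theorem \ref{thm:convergence rate of SD} into $\mathbb{E}[d_n^i]$, are left for subsequent results.
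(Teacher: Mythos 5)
Your proof is correct and follows essentially the same route as the paper's: the same triangle-inequality bound through $x^*$, the same Lipschitz transfer to the augmented functions $\check{f}_n^j$, the same use of (\ref{eq:sd:lower bound}) at the aggregated minimizer to establish $\epsilon''$-optimality with $\epsilon'' = \epsilon' + \frac{2L_f}{m}\sum_{i=1}^m d_n^i$, and the same final appeal to Theorem \ref{theorem:lipschitz continuity of solution set}. Your explicit remark that the pointwise maximum with a constant preserves the Lipschitz modulus $L_f$ is a detail the paper leaves implicit, but it does not change the argument.
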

\begin{proof}
The proof is similar to Lemma \ref{lemma:bound 2 of SD for compromise sp}. We show that $\frac{1}{m} \sum_{j=1}^m \hat{x}^j_{n}$ is $(\epsilon' + \frac{2 L_f}{m} \sum_{i=1}^m d_n^i)$-optimal solution of the problem in (\ref{eq:aggregated SD}) and then use Theorem \ref{theorem:lipschitz continuity of solution set} to finish the proof. See Appendix \ref{appendix:proof of lemma:bound 3 of SD for compromise sp} for the complete proof. 
\end{proof}
Finally, we conclude this section by providing the upper bounds on the expectation and variance of the pessimistic distance of $\check{X}_{N,\rho,\epsilon}$ to $X^*$, $\Delta(\check{X}_{N,\rho,\epsilon}, X^*)$.
\begin{theorem} \label{thm:expectation and variance of SD for compromise sp}
    Suppose that Assumptions \ref{assumption:part 2(2)} and \ref{assumption:part 3} holds. Let $\tau_3 = \frac{\epsilon' - \epsilon}{\epsilon} D_X+ 2\sqrt{\epsilon/\rho}$. If $\epsilon \leq \epsilon'$, then the following hold:
    \begin{itemize}
        \item[1.] 
    \begin{equation*}
        \mathbb{E}\left[\Delta(\check{X}_{N,\rho,\epsilon}, X^*)\right] \leq \left(1 + \frac{2 L_f D_X}{\epsilon} \right) \frac{K}{n} + \tau_3,
    \end{equation*}
    \item[2.] 
    \begin{equation*}
    \begin{aligned}
        & \text{Var}\left[\Delta(\check{X}_{N,\rho,\epsilon}, X^*)\right] \leq \left(1 + \frac{2 L_f D_X}{\epsilon} \right)^2 \frac{K D_X}{mn} + \left[\left(1 + \frac{2 L_f D_X}{\epsilon} \right) \frac{K}{n} + \tau_3 \right]^2.
    \end{aligned}
    \end{equation*}
    \item[3.] Furthermore, let $K_1$ be a positive constant. Let $\epsilon = \epsilon'$ and $\rho \geq K_1 n^2$. Then we have 
    $$
    \mathbb{E}\left[\Delta(\check{X}_{N,\rho,\epsilon}, X^*)\right] = O(\frac{1}{n}), \ \text{Var}\left[\Delta(\check{X}_{N,\rho,\epsilon}, X^*)\right] = O(\frac{1}{mn} + \frac{1}{n^2}). 
    $$
\end{itemize}
\end{theorem}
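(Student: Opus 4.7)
The plan is to convert the expectation--and--variance question into a question about the mean and variance of the single random variable $d_n^i \triangleq \|\hat{x}_n^i - x^*\|$, for which Theorem \ref{thm:convergence rate of SD} already supplies an expectation bound, and then leverage independence across replications (Assumption \ref{assumption:part 2(2)}) to shave a $1/m$ factor off the variance.

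First I would assemble a clean pointwise upper bound on $\Delta(\check{X}_{N,\rho,\epsilon}, X^*)$. Starting from Lemma \ref{lemma:bound 0 of SD for compromise SD}, I would apply the triangle inequality $\|\tfrac{1}{m}\sum_i \hat{x}_n^i - x^*\| \leq \tfrac{1}{m}\sum_i d_n^i$ and substitute the bound on $\inf_{x\in\check{X}_{N,\epsilon}}\|\tfrac{1}{m}\sum_j \hat{x}_n^j - x\|$ from Lemma \ref{lemma:bound 3 of SD for compromise sp}. After collecting the $d_n^i$ terms and the constants making up $\tau_3 = \tfrac{\epsilon'-\epsilon}{\epsilon}D_X + 2\sqrt{\epsilon/\rho}$, the result is the key inequality
\begin{equation*}
\Delta(\check{X}_{N,\rho,\epsilon}, X^*) \;\leq\; \Bigl(1 + \tfrac{2 L_f D_X}{\epsilon}\Bigr)\frac{1}{m}\sum_{i=1}^m d_n^i \;+\; \tau_3.
\end{equation*}
Part 1 then follows immediately by taking expectations and using $\mathbb{E}[d_n^i] \leq K/n$ from Theorem \ref{thm:convergence rate of SD}.

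For Part 2, I would first note that independence of the sample batches across replications (Assumption \ref{assumption:part 2(2)}) propagates to independence of the SD outputs $\hat{x}_n^i$, hence of $d_n^i$. The main step is to control $\mathrm{Var}[d_n^i]$. Since $\hat{x}_n^i, x^* \in X$ and $X$ is compact with diameter $D_X$, we have the a.s.\ bound $d_n^i \leq D_X$, which gives $(d_n^i)^2 \leq D_X \, d_n^i$ and therefore $\mathrm{Var}[d_n^i] \leq \mathbb{E}[(d_n^i)^2] \leq D_X \, \mathbb{E}[d_n^i] \leq K D_X / n$. Independence then yields $\mathrm{Var}\bigl[\tfrac{1}{m}\sum_i d_n^i\bigr] \leq K D_X/(mn)$. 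Finally, applying the dominated-variance relation in (\ref{eq:common bound on the variance}) with $X = \Delta(\check{X}_{N,\rho,\epsilon}, X^*)$ and $Y$ equal to the right-hand side of the key inequality, $\mathrm{Var}[X] \leq \mathrm{Var}[Y] + (\mathbb{E}[Y])^2$, gives Part 2 directly. Part 3 is then a substitution exercise: set $\epsilon=\epsilon'$ so that $\tau_3 = 2\sqrt{\epsilon/\rho}$, take $\rho \geq K_1 n^2$ so that $\tau_3 = O(1/n)$, and read off the orders from Parts 1 and 2.

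The main obstacle is the variance step, because Theorem \ref{thm:convergence rate of SD} only provides the first moment $\mathbb{E}[\|\hat{x}_n - x^*\|] \leq K/n$, not a second moment bound. The shortcut that makes the argument go through is the trivial domination $(d_n^i)^2 \leq D_X d_n^i$, which converts a second moment into a first moment at the cost of one factor of the diameter; without compactness of $X$ this route would be unavailable and one would need a sharper SD-specific variance estimate. A minor technical point I would double-check is that the assumption $\epsilon \leq \epsilon'$ in Lemma \ref{lemma:bound 3 of SD for compromise sp} matches the hypothesis of the present theorem (it does), so the pointwise inequality is valid almost surely and the subsequent expectation/variance manipulations are justified.
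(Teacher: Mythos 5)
Your proposal is correct and follows essentially the same route as the paper: the same pointwise bound $\Delta(\check{X}_{N,\rho,\epsilon}, X^*) \leq \left(1 + \tfrac{2 L_f D_X}{\epsilon}\right)\tfrac{1}{m}\sum_{i=1}^m d_n^i + \tau_3$ assembled from Lemmas \ref{lemma:bound 0 of SD for compromise SD} and \ref{lemma:bound 3 of SD for compromise sp}, the same second-moment trick $(d_n^i)^2 \leq D_X d_n^i$ exploiting compactness, the same variance decomposition via independence across replications, and the same use of relation (\ref{eq:common bound on the variance}). No gaps.
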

\begin{proof}
Generally speaking, the proof strategy is similar to Theorem \ref{thm:variance:saa}. At first, we show that the upper bound of the variance of the pessimistic distance, $\Delta(\check{X}_{N,\rho,\epsilon}, X^*)$, can be expressed as a linear combination of $d_n^i$ plus some systematic error. Then we can use the fact that $d_n^i$ is independent of $d_{n}^j$ given $i \neq j$ by Assumption \ref{assumption:part 2(2)} to decompose the upper bound of the variance of the pessimistic distance. Finally, we make use of the compactness of the feasible region and the convergence rate of the first moment of $d_n^i$ (by Theorem \ref{thm:convergence rate of SD}) to finish the proof.\\
\indent Let $\tau_3 = \frac{\epsilon' - \epsilon}{\epsilon} D_X+ 2\sqrt{\epsilon/\rho}$. By Lemma \ref{lemma:bound 3 of SD for compromise sp}, it follows from Lemma \ref{lemma:bound 0 of SD for compromise SD} that, 
\begin{equation} \label{eq:bound 0 of SD for compromise SD:sto1}
\begin{aligned}
    \Delta(\check{X}_{N,\rho,\epsilon}, X^*) &\leq \frac{1}{m} \sum_{i=1}^m d_n^i + \frac{\epsilon' - \epsilon + \frac{2 L_f}{m} \sum_{i=1}^m d_n^i}{\epsilon} D_X + 2 \sqrt{\epsilon / \rho} \\
    &= \frac{1}{m} \sum_{i=1}^m \left(1 + \frac{2 L_f D_X}{\epsilon} \right) d_n^i + \tau_3.
\end{aligned}   
 \end{equation}

Furthermore, since $d_n^1, d_n^2, \ldots, d_n^m$ are independent of each other by Assumption \ref{assumption:part 2(2)} and $\tau_3$ 
is a constant, the variance of the right-hand-side of (\ref{eq:bound 0 of SD for compromise SD:sto1}) is 
\begin{equation} \label{eq:variance of compromise SD:01}
    \begin{aligned}
        \text{Var}\left[ \frac{1}{m} \sum_{i=1}^m (1 + \frac{2 L_f D_X}{\epsilon}) d_n^i + \tau_3 \right] = \sum_{i=1}^m \frac{1}{m^2} \left(1 + \frac{2 L_f D_X}{\epsilon} \right)^2 \text{Var}[d_n^i]. 
    \end{aligned}
\end{equation}
Since $0 \leq d_n^i \leq D_X$, we have $\mathbb{E}[(d_n^i)^2] \leq \mathbb{E}[d_n^i D_X] \leq \frac{K D_X}{n}$ for large enough $n$ by Theorem \ref{thm:convergence rate of SD}. This implies that $\text{Var}[d_n^i] \leq \mathbb{E}[(d_n^i)^2] \leq \frac{K D_X}{n}$.
Hence, it follows from (\ref{eq:variance of compromise SD:01}) that 
\begin{equation} \label{eq:variance of compromise SD:02}
    \begin{aligned}
        \text{Var}\left[ \frac{1}{m} \sum_{i=1}^m (1 + \frac{2 L_f D_X}{\epsilon}) d_n^i + \tau_3 \right] \leq \left(1 + \frac{2 L_f D_X}{\epsilon} \right)^2 \frac{K D_X}{mn}. 
    \end{aligned}
\end{equation}
On the other hand, 
\begin{equation} \label{eq:variance of compromise SD:03}
    \begin{aligned}
        \mathbb{E}\left[\frac{1}{m} \sum_{i=1}^m (1 + \frac{2 L_f D_X}{\epsilon}) d_n^i + \tau_3 \right] 
        \leq (1 + \frac{2 L_f D_X}{\epsilon}) \frac{K}{n} + \tau_3 
    \end{aligned}
\end{equation}
Finally, using (\ref{eq:common bound on the variance}), (\ref{eq:variance of compromise SD:02}), and (\ref{eq:variance of compromise SD:03}), we derive the upper bound of the variance of $\Delta(\check{X}_{N,\rho,\epsilon}, X^*)$ below:
\begin{equation} \label{eq:variance of compromise SD:04}
    \begin{aligned}
        &\text{Var}\left[\Delta(\check{X}_{N,\rho,\epsilon}, X^*)\right] \leq \left(1 + \frac{2 L_f D_X}{\epsilon} \right)^2 \frac{K D_X}{mn} + \left[\left(1 + \frac{2 L_f D_X}{\epsilon} \right) \frac{K}{n} + \tau_3 \right]^2.
    \end{aligned}
\end{equation}
Finally, the big $O$ argument follows from direct algebraic derivation based on (\ref{eq:variance of compromise SD:03}) and (\ref{eq:variance of compromise SD:04}). 
\end{proof}

In Theorem \ref{thm:expectation and variance of SD for compromise sp}, the reliability of the compromise decision could be understood in the following way. The first part of the upper bound (i.e., $(1 + \frac{2 L_f D_X}{\epsilon})^2 \frac{K D_X}{mn}$) of $\text{Var}\left[\Delta(\check{X}_{N,\rho,\epsilon}, X^*)\right]$ derives from the effect of averages of the solution and function estimate. The second part of the upper bound of the variance is due to the systematic error from the $\epsilon$-optimal solution and the expected distance of estimated solution by SD to the optimal solution set. We also observe that smaller $\epsilon$ is prone to have a more conservative (larger) upper bound on the variance. That is, smaller $\epsilon$ tends to make the compromise decision more adapted to the particular realization of the regularized objective in the Compromise Decision problem. \\

\section{Conclusion}
In this paper, we have presented a theory of reliability for compromise decisions  in stochastic programming problems under one unifying framework.   Our theoretical findings not only validate the computational observations provided in \cite{sen2016mitigating} and \cite{xu2023compromise}, but also extend the previous stopping rule to seeking a non-dominated solution of an estimated mean-variance problem \eqref{eq:Mean-Variance}.  From a computational perspective, we suggest using parallel computing to implement stopping rules which would lead to greater reliability as mentioned in \cite{sen2016mitigating}.  These developments also underscore the differences between deterministic optimization and truly large-scale stochastic optimization.    \\
\indent Based on our theory, we also provide some implementable guidance for choosing the regularization coefficient, $\rho$, and tolerance, $\epsilon$ based on the sample size, $n$. Let $K_1$ be a positive constant. If the ``replication" step involves a cutting-plane-type method, we require that $\rho \geq K_1 n$, $\epsilon_1 = \epsilon$, and $\epsilon_2 = 0$. On the other hand, if the ``replication" step involves SD, we require that $\rho \geq K_1 n^2$ and $\epsilon = \epsilon'$. \\
\indent There remain several open questions about extensions of the compromise decision framework. The first open question is whether it is possible to connect our decision-based regularizer to a class of Distributionally Robust Optimization (DRO) problems. In connection with this question, we note that a study of DRO and robust statistics (Blanchet et al. \cite{blanchet2024distributionally}), reveals that $\phi$-Divergence-Based DRO is related to the variance regularization problem and optimal transport-based DRO is related to norm regularization of the fitted parameters.  In the context of SP, we observe that our decision-based regularizer shares some properties which are similar to norm regularization. A second open question is about how one might reformulate compromise decision problems for nonconvex SP problems and problems with constraint sets that are specified by nonlinear equations/inequalities such as those arising in Neural Networks (NN). The latter have the reputation of being reasonably good on average, and yet they can be unreliable. Perhaps, a study of reliability of solutions for non-convex stochastic optimization models will lead to greater reliability in algorithms for NN. In connection with these directions, one could also investigate the impact of different regularizers (e.g., kernel methods \cite{xu2023ensemble}) for compromise decisions.  

\section*{Acknowledgements}
This research originated during the grant FA9550-20-1-0006 from AFOSR, but was only completed due to a new grant DE-SC0023361 from the ASCR program of the Department of Energy.  We thank both sources for this invaluable support.  We also thank Roger Wets for highlighting the need for reliability of SP decisions (ICSP conference in Buzios, Brazil, 2016), and his encouragement since.
\begin{appendices}
\section{Table of Notations} \label{appendix:table of notations}

\begin{table}[!htbp]
    \footnotesize
    \centering
    \begin{tabular}{|c|c|c|}
         \hline
         Notations & Description & Usage\\
         \hline
         \multirow{2}{*}{$f_n(x;\xi^n_i)$} & Objective function of SAA problem given sample set $\xi^n_i$ & Computation and  \\
         & & Analysis\\
         \hline
         \multirow{2}{*}{$x_n(\xi^n_i)$} & Optimal solution of SAA problem given sample set $\xi^n_i$ & Computation and  \\
         & & Analysis\\
         \hline
         $X_{n,\epsilon}(\xi^n_i)$ & $\epsilon$-optimal solution set of SAA problem given sample set $\xi^n_i$ & Analysis\\
         \hline
         \multirow{2}{*}{$x_{n,\epsilon}(\xi^n_i)$} & $\epsilon$-optimal solution of SAA problem given sample set $\xi^n_i$ & Computation and \\
         & & Analysis\\ 
         \hline
         $\hat{X}_{N,\rho,\epsilon}(\xi^N)$ & $\epsilon$-optimal solution of the Compromise Decision problem & Analysis\\
         \hline
         $\bar{X}_{N,\epsilon}(\xi^N)$ & $\epsilon$-optimal solution of the aggregated SAA problem & Analysis\\
         \hline
    \end{tabular}
    \caption{Notations in Section \ref{sec:classic compromise SP}}
    \label{tab:notation1}
\end{table}

\begin{table}[!htbp]
    \footnotesize
    \centering
    \begin{tabular}{|c|c|c|}
    \hline
         Notations & Description & Usage\\
         \hline
            \multirow{2}{*}{$\hat{x}_n(\xi^n_i)$} & Estimated decision (by a deterministic algorithm) & Computation and \\
            & of SAA problem given sample set $\xi^n_i$ & Analysis\\
            \hline
            \multirow{2}{*}{$\hat{f}_n(x;\xi^n_i)$} & Piecewise linear approximation & Computation and \\
            & of SAA objective function given sample set $\xi^n_i$ & Analysis\\
            \hline
            \multirow{2}{*}{$\nu_{n,\epsilon_2}(x;\xi^n_i)$} & \multirow{2}{*}{$\epsilon_2$-subgradient of SAA objective function given sample set $\xi^n_i$} & Computation and  \\
            & & Analysis\\
            \hline
            \multirow{2}{*}{$\hat{f}_n(x;\xi^n_i)$} & Augmented piecewise linear approximation & Computation and \\
            & of SAA objective function given sample set $\xi^n_i$ & Analysis\\
            \hline
            $\hat{X}_{N,\epsilon}(\xi^N)$ & $\epsilon$-optimal solution of the problem $\min_{x \in X} \frac{1}{m} \sum_{i=1}^m \hat{f}_n(x;\xi^n_i)$& Analysis \\
            \hline
           $\check{X}_{N,\rho,\epsilon}(\xi^N)$ & $\epsilon$-optimal solution of the Compromise Decision problem & Analysis\\
         \hline
    \end{tabular}
    \caption{Notations in Section \ref{sec:algorithms compromise sp}}
    \label{tab:notation2}
\end{table}

\begin{table}[!htbp]
    \centering
    \begin{tabular}{|c|c|c|}
    \hline
         Notations & Description & Usage\\
         \hline
         \multirow{2}{*}{$\hat{x}^i_n$} & Incumbent solution at the $n^{\text{th}}$ iteration & Computation and  \\
         & of SD in the $i^{\text{th}}$ replication  & Analysis\\
         \hline
         \multirow{2}{*}{$\hat{f}^i_n(x)$} & Approximation function at the $n^{\text{th}}$ iteration & Computation and \\
         & of SD in the $i^{\text{th}}$ replication & Analysis\\
         \hline
         \multirow{2}{*}{$\check{f}^i_n(x)$} & Augmented approximation function at the $n^{\text{th}}$ iteration & Computation and \\
         & of SD in the $i^{\text{th}}$ replication & Analysis\\
         \hline 
         $\check{X}_{N,\rho,\epsilon}$ & $\epsilon$-optimal solution set of the Compromise Decision Problem & Analysis \\ 
         \hline
         $\check{X}_{N,\epsilon}$ & $\epsilon$-optimal solution set of the problem $\min_{x \in X} \frac{1}{m} \sum_{i=1}^m \check{f}^i_n(x)$ & Analysis \\
         \hline
    \end{tabular}
    \caption{Notations in Section \ref{sec:SD compromise sp}}
    \label{tab:notation3}
\end{table}

\section{Some Proofs}\label{appendix secs}

\subsection{Proof of Lemma \ref{lemma:rademacher:discrete}.} \label{appendix:proof of lemma:rademacher:discrete}
\proof
Let $\bar{A}$ be the symmetric hull of $A$, i.e., $\bar{A} = A \cup \{-a: a \in A \}$. Then $|\bar{A}| \leq 2 |A| = 2 N$ and $\max_{a \in A} \|a\|^2 = \max_{a \in \bar{A}} \|a\|^2$. 
Furthermore, 
\begin{equation} \label{eq:rademacher00}
\begin{aligned}
 \max_{a \in A} \frac{1}{n} \left|\sum_{i=1}^n \tilde{\sigma}_i a_i \right| &\leq \max_{a \in \bar{A}} \frac{1}{n} \left|\sum_{i=1}^n \tilde{\sigma}_i a_i \right| &  (A \subset \bar{A})\\
&= \max_{a \in \bar{A}} \frac{1}{n} \sum_{i=1}^n \tilde{\sigma}_i a_i & (\bar{A} \text{ is symmetric})
\end{aligned}
\end{equation}
By taking the expectation over $\tilde{\sigma}_1, \ldots, \tilde{\sigma}_n$ for both sides of (\ref{eq:rademacher00}) and by Definition \ref{def:rademacher average of finite set}, we have 
\begin{equation} \label{eq:rademacher01}
    R_n(A) \leq R_n(\bar{A}) = \mathbb{E}_{\tilde{\sigma}} \left[ \max_{a \in \bar{A}} \frac{1}{n} \sum_{i=1}^n \tilde{\sigma}_i a_i \right].
\end{equation}
We claim that the rest of the proof follows from the proof of \cite[Theorem 3]{boucheron2005theory}. Here is why: for $a \in \bar{A}$ and $s > 0$, we have
\begin{equation} \label{eq:rademacher02}
    \begin{aligned}
        \mathbb{E}_{\tilde{\sigma}} \left[\exp{\left(s \frac{1}{n}\sum_{i=1}^n \tilde{\sigma}_i a_i\right)} \right] &= \prod_{i=1}^n \mathbb{E}_{\tilde{\sigma}}\left[\exp{(s \frac{1}{n} \tilde{\sigma}_i a_i)} \right] & \text{(by independence of $\{\sigma_i\}_{i=1}^n$)}\\
        &\leq \prod_{i=1}^n \exp{\left(\frac{s^2 a_i^2}{2 n^2} \right)} & \text{(by Hoeffding's inequality)} \\
        &= \exp{\left(\frac{s^2 \|a\|^2 }{2 n^2} \right)}.
    \end{aligned}
\end{equation}
Hence, 
\begin{equation} \label{eq:rademacher03}
\begin{aligned}
    \exp{(s R_n(A))} &\leq \exp{\left(s \mathbb{E}_{\tilde{\sigma}}\left[\sup_{a \in \bar{A}} \frac{1}{n} \sum_{i=1}^n \tilde{\sigma}_i a_i \right] \right)} & \text{(by \ref{eq:rademacher01})}\\
    &\leq \mathbb{E}_{\tilde{\sigma}} \left[\exp{\left(s \sup_{a \in \bar{A}} \frac{1}{n} \sum_{i=1}^n \tilde{\sigma}_i a_i \right)} \right] & \text{(by Jensen's inequality)} \\
    &\leq \sum_{a \in \bar{A}} \mathbb{E}_{\tilde{\sigma}}\left[\exp{(s \frac{1}{n} \sum_{i=1}^n \tilde{\sigma}_i a_i)} \right] \\
    &\leq 2 N \max_{a \in \bar{A}} \exp{\left(\frac{s^2 \| a\|^2 }{2n^2} \right)} & \text{(by \ref{eq:rademacher02})}\\
    & = 2 N \max_{a \in A} \exp{\left(\frac{s^2 \| a\|^2 }{2n^2} \right)} 
\end{aligned}
\end{equation}
The last equality of (\ref{eq:rademacher03}) follows because $\bar{A}$ is the symmetric hull of $A$. Take the logarithm of both sides of (\ref{eq:rademacher03}), we have 
\begin{equation} \label{eq:rademacher04}
    R_n(A) \leq \frac{1}{s} \log \left( 2 N  \max_{a \in A} \exp{\left(\frac{s^2 \| a\|^2 }{2n^2} \right)} \right)
\end{equation}
By minimizing the right-hand side of (\ref{eq:rademacher04}) over $s > 0$, the final result follows.
\endproof

\subsection{Proof of Lemma \ref{lemma:rademacher complexity of objective:holder continuity}.} \label{appendix:proof of lemma:rademacher complexity of objective:holder continuity}
    \proof
     According to Lemma \ref{lemma:rademacher:discrete set} and the proof of \cite[Lemma B.2]{ermoliev2013sample}, we derive (\ref{eq:rademacher:holder continuity01}). \\
    For any $\lambda \in (0, \frac{1}{2})$, we have
    \begin{equation}
        \begin{aligned}
             R_n(F,\xi^n)
             &\leq (L_F D^{\gamma} p^{\frac{\gamma}{2}} + M_F \sqrt{2 (\log2 + \frac{p}{2\gamma}\log n)} )/ \sqrt{n} \\
             &\leq \left(L_F  D^{\gamma} p^{\frac{\gamma}{2}} + M_F \sqrt{2 (\log2)} + M_F \sqrt{\frac{p}{\gamma}\log n} \right)/ \sqrt{n} \\
             &\leq \frac{1}{n^\lambda} \left(\frac{L_F D^{\gamma} p^{\frac{\gamma}{2}} + M_F \sqrt{2 (\log2)}}{n^{\frac{1}{2} - \lambda}} + M_F p^{1/2} \gamma^{-1/2} \sqrt{\frac{\log{n}}{n^{1-2\lambda}}}\right) \\
             &\leq \left(L_F D^{\gamma} p^{\frac{\gamma}{2}} + M_F \sqrt{2 (\log2)} + \frac{M_F p^{1/2}}{\sqrt{\gamma (1 - 2 \lambda)e}}  \right) \frac{1}{n^\lambda}. 
        \end{aligned}
    \end{equation}
\endproof

\subsection{Proof of Lemma \ref{lemma:Lipschitz continuity of h}} \label{appendix:proof of lemma:Lipschitz continuity of h}
\proof
    Pick $(x_1,y_1), (x_2,y_2) \in X \times Y$. Note that both $(x_1,y_1)$ and $(x_1,y_1)$ are $(p+1)$-dimensional vectors. 
    For $\xi \in \Xi$, we have
    \begin{equation}
        \begin{aligned}
            &|H(x_1,y_1,\xi) - H(x_2,y_2,\xi)| \\
            &= \left|(F(x_1,\xi) - y_1)^2 - (F(x_2,\xi) - y_2)^2 \right| \\
            &= \left|F(x_1,\xi) - y_1 + F(x_2,\xi) - y_2 \right| \left|F(x_1,\xi) - y_1 - F(x_2,\xi) + y_2 \right| \\
            &\leq 4M_F\left(|F(x_1,\xi) - F(x_2,\xi)| + |y_1 - y_2| \right) \\
            &\leq 4M_F\left(L_F \|x_1 - x_2 \|^{\gamma} + |y_1 - y_2 | \right) \\
            &\leq 4M_F\left(L_F \|x_1 - x_2\| + |y_1 - y_2 | \right) \\
            &\leq 4M_F \sqrt{L_F^2 + 1} \left\|(x_1, y_1) - (x_2,y_2) \right\|.
        \end{aligned}
    \end{equation}
    The last inequality holds because
    \begin{equation}
    \begin{aligned}
        &(L_F^2 + 1) \|(x_1, y_1) - (x_2,y_2)\|^2  - \left(L_F \|x_1 - x_2\| + |y_1 - y_2 | \right)^2 \\
        &= \|x_1 - x_2\|^2 + L_F^2 (y_1 - y_2)^2 - 2L_F\|x_1- x_2 \| |y_1 - y_2| \\
        &= \left(\|x_1 - x_2 \| - L_F|y_1 - y_2| \right)^2 \geq 0.
    \end{aligned}
    \end{equation}
    Also note that for $x \in X$ and $y \in Y$, we have 
    \begin{equation*}
        H(x,y,\xi) = (F(x,\xi) - y)^2 \leq (2 M_F)^2 = 4 M_F^2. 
    \end{equation*}
    By following the same proof strategy as in the first part of Lemma \ref{lemma:Lipschitz continuity of h}, we can prove the rest of the second claim.  
\endproof

\subsection{Proof of Lemma \ref{lemma:complexity of compound variance}} \label{appendix:proof of lemma:complexity of compound variance}
\proof
The proof is similar to the proof of \cite[Lemma 3.4]{ermoliev2013sample}. Note that 
    \begin{equation*}
        \begin{aligned}
            \delta^h_n(\xi^n) &= \sup_{x \in X} \left|\frac{1}{n} \sum_{i=1}^n H\left(x,\frac{1}{n} \sum_{i=1}^n F(x,\xi_k), \xi_i \right) -  \mathbb{E}_{\tilde{\xi}}\left[H\left(x,\mathbb{E}_{\tilde{\xi}}[F(x,\tilde{\xi})], \tilde{\xi}\right)\right] \right| \\
            &\leq \sup_{x \in X}  \left|\frac{1}{n} \sum_{i=1}^n H\left(x,\frac{1}{n}\sum_{k=1}^n F(x,\xi_k), \xi_i\right) - \frac{1}{n} \sum_{i=1}^n H(x,\mathbb{E}_{\tilde{\xi}}[F(x,\tilde{\xi})], \xi_i) \right| \\
            & \quad + \sup_{x \in X} \left|\frac{1}{n} \sum_{i=1}^n H\left(x,\mathbb{E}_{\tilde{\xi}}[F(x,\tilde{\xi})], \xi_i\right) - \mathbb{E}_{\tilde{\xi}}\left[H\left(x,\mathbb{E}_{\tilde{\xi}}[F(x,\tilde{\xi})], \tilde{\xi}\right)\right] \right| \\
            &\leq  \sup_{x \in X} 4 M_F \left|\frac{1}{n}\sum_{k=1}^n F(x,\xi_k) - \mathbb{E}_{\tilde{\xi}}[F(x,\tilde{\xi})] \right| \\
            &\quad + \sup_{x \in X, y \in Y} \left|\frac{1}{n} \sum_{i=1}^n H(x,y, \xi_i) - \mathbb{E}_{\tilde{\xi}}[H(x,y,\tilde{\xi})] \right| \\
            &= 4 M_F \delta^f_n(\xi^n) + \hat{\delta}_n(\xi^n). 
        \end{aligned}
    \end{equation*}
    The second claim follows based on the symmetric argument \cite{boucheron2005theory,ermoliev2013sample}. 
\endproof

\subsection{Proof of Lemma \ref{lemma:compromise:relation}} \label{appendix:proof of lemma:compromise:relation}
\proof
By the definitions in (\ref{eq:aggregated SAA problem}), (\ref{eq:epsilon-optimal solution set:agg}), and (\ref{notations:inexact compromise decicisions}), we have
\begin{equation} \label{eq:lemma:compromise:relation:01}
    \bar{f}_N\left(\bar{x}_{N,\epsilon}(\xi^N);\xi^N\right) \leq \bar{\theta}_N(\xi^N) + \epsilon \leq \bar{f}_N\left(\hat{x}_{N,\rho,\epsilon}(\xi^N);\xi^N \right) + \epsilon,
\end{equation}
and 
\begin{equation} \label{eq:lemma:compromise:relation:02}
    \begin{aligned}
        &\bar{f}_N\left(\hat{x}_{N,\rho,\epsilon}(\xi^N);\xi^N\right) + \frac{\rho}{2} \left\| \hat{x}_{N,\rho,\epsilon}(\xi^N) - \frac{1}{m} \sum_{j=1}^m x_{n,\epsilon}(\xi^n_j) \right\|^2  - \epsilon \\
        &\leq \theta_{N,\rho,\epsilon}(\xi^N) \\
        &\leq \bar{f}_N\left(\bar{x}_{N,\epsilon}(\xi^N);\xi^N \right) + \frac{\rho}{2} \left\| \bar{x}_{N,\epsilon}(\xi^N) - \frac{1}{m} \sum_{j=1}^m x_{n,\epsilon}(\xi^n_j) \right\|^2.
    \end{aligned}
\end{equation}
The combination of (\ref{eq:lemma:compromise:relation:01}) and (\ref{eq:lemma:compromise:relation:02}) implies that 
\begin{equation*}
    \begin{aligned}
        &\bar{f}_N\left(\hat{x}_{N,\rho,\epsilon}(\xi^N);\xi^N \right) + \frac{\rho}{2} \left\| \hat{x}_{N,\rho,\epsilon}(\xi^N) - \frac{1}{m} \sum_{j=1}^m \hat{x}_{n,\epsilon}(\xi^n_j) \right\|^2  - \epsilon \\
        & \leq \bar{f}_N\left(\hat{x}_{N,\rho,\epsilon}(\xi^N);\xi^N\right) + \epsilon + \frac{\rho}{2} \left\| \bar{x}_{N,\epsilon}(\xi^N) - \frac{1}{m} \sum_{j=1}^m \hat{x}_{n,\epsilon}(\xi^n_j) \right\|^2.
    \end{aligned}
\end{equation*}
This further implies that 
\begin{equation*}
\begin{aligned}
    \left\| \hat{x}_{N,\rho,\epsilon}(\xi^N) - \frac{1}{m} \sum_{j=1}^m x_{n,\epsilon}(\xi^n_j) \right\|^2 &\leq \left\| \bar{x}_{N,\epsilon}(\xi^N) - \frac{1}{m} \sum_{j=1}^m x_{n,\epsilon}(\xi^n_j) \right\|^2 + \frac{4 \epsilon}{\rho} \\
    & \leq \left( \left\| \bar{x}_{N,\epsilon}(\xi^N) - \frac{1}{m} \sum_{j=1}^m x_{n,\epsilon}(\xi^n_j) \right\| + 2 \sqrt{\epsilon / \rho}\right)^2.
\end{aligned}
\end{equation*}
Hence, we have 
\begin{equation} \label{eq:lemma:compromise:relation:03}
    \left\| \hat{x}_{N,\rho,\epsilon}(\xi^N) - \frac{1}{m} \sum_{j=1}^m x_{n,\epsilon}(\xi^n_j) \right\| \leq \left\| \bar{x}_{N,\epsilon}(\xi^N) - \frac{1}{m} \sum_{j=1}^m x_{n,\epsilon}(\xi^n_j) \right\| + 2 \sqrt{\epsilon / \rho}. 
\end{equation}
Finally, by triangular inequality and (\ref{eq:lemma:compromise:relation:03}), we have 
\begin{equation}
    \begin{aligned}
        \left\|\hat{x}_{N,\rho,\epsilon}(\xi^N) - x_{\epsilon}^* \right\| &\leq \left\|\hat{x}_{N,\rho,\epsilon}(\xi^N) - \frac{1}{m} \sum_{j=1}^m x_{n,\epsilon}(\xi^n_j) \right\| + \left\|\frac{1}{m} \sum_{j=1}^m x_{n,\epsilon}(\xi^n_j) - x_{\epsilon}^* \right\| \\
        &\leq \left\| \bar{x}_{N,\epsilon}(\xi^N) - \frac{1}{m} \sum_{j=1}^m x_{n,\epsilon}(\xi^n_j) \right\| + \left\|\frac{1}{m} \sum_{j=1}^m x_{n,\epsilon}(\xi^n_j) - x_{\epsilon}^* \right\| + 2 \sqrt{\epsilon / \rho} 
    \end{aligned}
\end{equation}
\endproof
It is worth noting that the proof of Lemma \ref{lemma:compromise:relation} does not specify the form of the approximation function $\bar{f}_N(x;\xi^N)$, which provides certain flexibility in the later analysis. For instance, $\bar{f}_N(x;\xi^N)$ may be replaced by its piecewise linear approximation or even some local inexact approximation function. 

\subsection{Proof of Lemma \ref{lemma:compromise:relation2}} \label{appendix:proof of lemma:compromise:relation2}
\proof
First, we observe that 
\begin{equation*}
    \inf_{x \in X_{\epsilon}^* } \left\|\hat{x}_{N,\rho,\epsilon}(\xi^N) - x \right\| \leq \left\|\hat{x}_{N,\rho,\epsilon}(\xi^N) - x_{\epsilon}^* \right\|.
\end{equation*}
By Lemma \ref{lemma:compromise:relation}, we have 
\begin{equation} \label{eq:lemma:compromise:relation2:01}
\begin{aligned}
    \inf_{x \in X_{\epsilon}^* } \left\|\hat{x}_{N,\rho,\epsilon}(\xi^N) - x \right\| 
    &\leq \left\|\hat{x}_{N,\rho,\epsilon}(\xi^N) - x_{\epsilon}^* \right\| \\
    &\leq \left\| \bar{x}_{N,\epsilon}(\xi^N) - \frac{1}{m} \sum_{j=1}^m x_{n,\epsilon}(\xi^n_j) \right\| \\
    &\quad + \left\|\frac{1}{m} \sum_{j=1}^m x_{n,\epsilon}(\xi^n_j) - x_{\epsilon}^* \right\| + 2 \sqrt{\epsilon / \rho}.
\end{aligned}
\end{equation}
Take the supremum of the left-hand side of (\ref{eq:lemma:compromise:relation2:01}) over $\hat{x}_{N,\rho, \epsilon}(\xi^N) \in \hat{X}_{N,\rho, \epsilon}(\xi^N)$, we have 
\begin{equation} \label{eq:lemma:compromise:relation2:02}
\begin{aligned}
    &\sup_{x' \in \hat{X}_{N,\rho, \epsilon}(\xi^N)} \inf_{x \in X_{\epsilon}^* } \|x' - x\| \\
    &\leq \left\| \bar{x}_{N,\epsilon}(\xi^N) - \frac{1}{m} \sum_{j=1}^m x_{n,\epsilon}(\xi^n_j) \right\| + \left\|\frac{1}{m} \sum_{j=1}^m x_{n,\epsilon}(\xi^n_j) - x_{\epsilon}^* \right\| + 2 \sqrt{\epsilon / \rho}. 
\end{aligned}
\end{equation}
Note that, by definition in (\ref{eq:Delta Set}), $\Delta(\hat{X}_{N,\rho,\epsilon}(\xi^N), X_{\epsilon}^*) =  \sup_{x' \in \hat{X}_{N,\rho, \epsilon}(\xi^N)} \inf_{x \in X_{\epsilon}^* } \|x' - x\|$. Next, by minimizing the right-hand side of (\ref{eq:lemma:compromise:relation2:02}) over $x^*_{\epsilon} \in X_{\epsilon}^*$, we have 
\begin{equation} \label{eq:lemma:compromise:relation2:03}
\begin{aligned}
    \Delta(\hat{X}_{N,\rho,\epsilon}(\xi^N), X_{\epsilon}^*) &\leq  \left\| \bar{x}_{N,\epsilon}(\xi^N) - \frac{1}{m} \sum_{j=1}^m x_{n,\epsilon}(\xi^n_j) \right\| \\
    & \quad + \inf_{x \in X_{\epsilon}^*} \left\|\frac{1}{m} \sum_{j=1}^m x_{n,\epsilon}(\xi^n_j) - x \right\| + 2 \sqrt{\epsilon / \rho}.
\end{aligned}
\end{equation}
Finally, by taking the infimum of the right-hand side of (\ref{eq:lemma:compromise:relation2:03}) over $\bar{x}_{N,\epsilon}(\xi^N) \in \bar{X}_{N,\epsilon}(\xi^N)$, we have 
\begin{equation*}
\begin{aligned}
    \Delta(\hat{X}_{N,\rho,\epsilon}(\xi^N), X_{\epsilon}^*) &\leq  \inf_{x \in \bar{X}_{N,\epsilon}(\xi^N) } \left\| \frac{1}{m} \sum_{j=1}^m x_{n,\epsilon}(\xi^n_j) - x \right\| \\
    & \quad + \inf_{x \in X_{\epsilon}^*}\left\|\frac{1}{m} \sum_{j=1}^m x_{n,\epsilon}(\xi^n_j) - x \right\| + 2 \sqrt{\epsilon / \rho}.
\end{aligned}
\end{equation*}
\endproof

\subsection{Proof of Lemma \ref{lemma:compromise:relation4}} \label{appendix:proof of lemma:compromise:relation4}
\proof
By (\ref{eq:complexity:cost2:02}) from the proof of Theorem \ref{thm:sample complexity cost of compromise decision problem}, we have
\begin{equation*} 
    \theta_{n}(\xi^n_j) - \theta^* \leq \delta^f_{n}(\xi^n_j), \ j = 1,2,\ldots,m,
\end{equation*}
which implies that 
\begin{equation} \label{eq:lemma:compromise:relation4:01}
    \frac{1}{m} \sum_{j=1}^m \theta_{n}(\xi^n_j) \leq \theta^* + \frac{1}{m} \sum_{j=1}^m \delta^f_{n}(\xi^n_j). 
\end{equation}
By convexity in Assumption \ref{assumption:part 2}, we also observe that 
\begin{equation}
\begin{aligned}
    f\left(\frac{1}{m} \sum_{j=1}^m x_{n,\epsilon}(\xi^n_j) \right) &\leq \frac{1}{m} \sum_{j=1}^m f\left(x_{n,\epsilon}(\xi^n_j) \right) \\
    &= \frac{1}{m} \sum_{j=1}^m f_{n}\left(x_{n,\epsilon}(\xi^n_j);\xi^n_j \right) + f\left(x_{n,\epsilon}(\xi^n_j)\right) - f_{n}\left(x_{n,\epsilon}(\xi^n_j);\xi^n_j\right) \\
    &\leq \frac{1}{m} \sum_{j=1}^m \left( \theta_{n}(\xi^n_j) + \epsilon \right)+ \frac{1}{m} \sum_{j=1}^m \delta^f_{n}(\xi^n_j) \\
    &\leq \theta^* + \epsilon + \frac{2}{m} \sum_{j=1}^m \delta_{n}^f(\xi^n_j), \quad  \text{ by (\ref{eq:lemma:compromise:relation4:01})}. 
\end{aligned}
\end{equation}
Let $\epsilon' = \epsilon + \frac{2}{m} \sum_{j=1}^m \delta_{n}^f(\xi^n_j)$, then $\frac{1}{m} \sum_{j=1}^m x_{n,\epsilon}(\xi^n_j) \in X^*_{\epsilon'}$ (i.e., the average of the estimated decisions across all the replications is an $\epsilon'$-optimal solution of the true problem). Hence, by Theorem \ref{theorem:lipschitz continuity of solution set}, we have 
\begin{equation}
    \inf_{x \in X^*_{\epsilon}} \| \frac{1}{m} \sum_{j=1}^m x_{n,\epsilon}(\xi^n_j) - x \| \leq \frac{\epsilon' - \epsilon}{\epsilon} D_X \leq \frac{\frac{2}{m} \sum_{j=1}^m \delta^f_{n}(\xi^n_j)}{\epsilon} D_X. 
\end{equation}
\endproof

\subsection{Proof of Lemma \ref{lemma:bound 1 of algorithm for compromise sp}} \label{appendix:proof of lemma:bound 1 of algorithm for compromise sp}
\proof
By Conditions \ref{condition:outer approximation}  and \ref{condition:termination criterion}, we observe that
\begin{equation*}
    f_n(\hat{x}_n(\xi^n_j);\xi^n_j) \leq \theta_n(\xi^n_j) + \epsilon_1. 
\end{equation*}
By the inequality in (\ref{eq:complexity:cost2:02}) from the proof of Theorem \ref{thm:sample complexity cost of compromise decision problem}, we have 
\begin{equation*}
    \theta_n(\xi^n_j) \leq \theta^* + \delta^f_n(\xi^n_j).
\end{equation*}
Furthermore, by the convexity of $f(x)$ according to Assumption \ref{assumption:part 2}, we have 
\begin{equation*}
\begin{aligned}
    f\left(\frac{1}{m} \sum_{j=1}^m \hat{x}_n(\xi^n_j) \right) &\leq \frac{1}{m} \sum_{j=1}^m f\left(\hat{x}_n(\xi^n_j) \right) \\
    &= \frac{1}{m} \sum_{j=1}^m \left( f_n\left(\hat{x}_n(\xi^n_j);\xi^n_j \right) + f\left(\hat{x}_n(\xi^n_j)\right) - f_n\left(\hat{x}_n(\xi^n_j);\xi^n_j \right) \right)\\
    &\leq \frac{1}{m} \sum_{j=1}^m \left( \theta_n(\xi^n_j) + \epsilon_1 + f(\hat{x}_n(\xi^n_j)) - f_n(\hat{x}_n(\xi^n_j);\xi^n_j) \right)\\
    & \leq \theta^* + \epsilon_1 + \frac{2}{m} \sum_{j=1}^m \delta^f_n(\xi^n_j).
\end{aligned}
\end{equation*}
Let $\epsilon' = \epsilon_1 + \frac{2}{m} \sum_{j=1}^m \delta^f_n(\xi^n_j)$, then $\frac{1}{m} \sum_{j=1}^m \hat{x}_n(\xi^n_j) \in X^*_{\epsilon'}$ and $\epsilon' > \epsilon$. By Theorem \ref{theorem:lipschitz continuity of solution set}, we have 
\begin{equation*}
    \inf_{x \in X^*_{\epsilon}} \left\| \frac{1}{m} \sum_{j=1}^m \hat{x}_{n}(\xi^n_j) - x \right\|  \leq \frac{\epsilon_1 - \epsilon + \frac{2}{m} \sum_{j=1}^m \delta_{n}(\xi^n_j)}{\epsilon} D_X. 
\end{equation*}
\endproof

\subsection{Proof of Lemma \ref{lemma:bound 2 of algorithm for compromise sp}} \label{appendix:proof of lemma:bound 2 of algorithm for compromise sp}
\proof 
We let $\hat{x}_{N}(\xi^N) \in \arg \min\limits_{x \in X} \frac{1}{m} \sum_{i=1}^m \check{f}_{n}(x;\xi^n_i)$. 
By the convexity of $\check{f}_n^i$ infered from Condition \ref{condition:outer approximation}, we have 
\begin{equation} \label{eq:lemma:pla01}
    \begin{aligned}
        \frac{1}{m} \sum_{i=1}^m \check{f}_{n} \left(\frac{1}{m} \sum_{j=1}^m \hat{x}_{n}(\xi^n_j);\xi^n_i \right) &\leq \frac{1}{m^2} \sum_{i=1}^m \sum_{j=1}^m \check{f}_{n}\left(\hat{x}_{n}(\xi^n_j);\xi^n_i \right) \\
        &= \frac{1}{m^2} \sum_{i=1}^m \sum_{j=1}^m \Bigg[\check{f}_{n}(\hat{x}_{N}(\xi^N);\xi^n_i) \\
        & \quad + \check{f}_{n}(\hat{x}_{n}(\xi^n_j);\xi^n_i) - \check{f}_{n}(\hat{x}_{N}(\xi^N);\xi^n_i)\Bigg]
    \end{aligned}
\end{equation}
For $i \neq j$, we have 
\begin{equation*}
    \begin{aligned}
        \check{f}_{n}(\hat{x}_{n}(\xi^n_j);\xi^n_i) = \check{f}_{n}(\hat{x}_{n}(\xi^n_j);\xi^n_j) + \check{f}_{n}(\hat{x}_{n}(\xi^n_j);\xi^n_i) - \check{f}_{n}(\hat{x}_{n}(\xi^n_j);\xi^n_j).
    \end{aligned}
\end{equation*}
By the definition of $\check{f}_n$ in (\ref{eq:augmented piecewise linear funtcion}) and Condition \ref{condition:outer approximation}, we have $0 \leq f_n(\hat{x}_{n}(\xi^n_j);\xi^n_i) - \check{f}_{n}(\hat{x}_{n}(\xi^n_j);\xi^n_i) \leq \epsilon_2$ and $ 0 \leq f_n(\hat{x}_{n}(\xi^n_j);\xi^n_j) - \check{f}_{n}(\hat{x}_{n}(\xi^n_j);\xi^n_j) \leq \epsilon_2$.
Since
\begin{equation*}
    \begin{aligned}
        \check{f}_{n}(\hat{x}_{n}(\xi^n_j);\xi^n_i) - f(\hat{x}_{n}(\xi^n_j)) &= \check{f}_{n}(\hat{x}_{n}(\xi^n_j),\xi^n_i) - f_n(\hat{x}_{n}(\xi^n_j);\xi^n_i) \\
        &\quad + f_n(\hat{x}_{n}(\xi^n_j);\xi^n_i) - f(\hat{x}_{n}(\xi^n_j)) \\
        &\leq \delta_n^f(\xi^n_i),
    \end{aligned}
\end{equation*}
and 
\begin{equation*}
    \begin{aligned}
        f(\hat{x}_{n}(\xi^n_j)) - \check{f}_{n}(\hat{x}_{n}(\xi^n_j);\xi^n_j) &= f(\hat{x}_{n}(\xi^n_j)) - f_n(\hat{x}_{n}(\xi^n_j);\xi^n_j) \\
        & \quad + f_n(\hat{x}_{n}(\xi^n_j);\xi^n_j) - \check{f}_{n}(\hat{x}_{n}(\xi^n_j);\xi^n_j)\\
        &\leq \epsilon_2 + \delta_n^f(\xi^n_j),
    \end{aligned}
\end{equation*}
we have
\begin{equation*}
\begin{aligned}
     \check{f}_{n}(\hat{x}_{n}(\xi^n_j);\xi^n_i) - \check{f}_{n}(\hat{x}_{n}(\xi^n_j);\xi^n_j) &\leq \check{f}_{n}(\hat{x}_{n}(\xi^n_j);\xi^n_i) - f(\hat{x}_{n}(\xi^n_j)) \\
     & \quad + f(\hat{x}_{n}(\xi^n_j)) -\check{f}_{n}(\hat{x}_{n}(\xi^n_j);\xi^n_j)\\
     &\leq \epsilon_2 + \delta_n(\xi^n_i) + \delta_n(\xi^n_j).
\end{aligned}
\end{equation*}
On the other hand, by Condition \ref{condition:termination criterion} and the property $ \check{f}_n(x;\xi^n_j) \geq \hat{f}_n(x;\xi^n_j)$ due to (\ref{eq:augmented piecewise linear funtcion}), we have

\begin{equation} \label{eq:lemma:pla02}
    \check{f}_n\left(\hat{x}_{N}(\xi^N);\xi^n_j \right) \geq \min_{x \in X} \check{f}_n(x;\xi^n_j) \geq \min_{x \in X} \hat{f}_n(x;\xi^n_j) = \hat{f}_n(\hat{x}_n(\xi^n_j);\xi^n_j),
\end{equation}
which implies that 
\begin{subequations}
\begin{align}
    \check{f}_n(\hat{x}_n(\xi^n_j);\xi^n_j) - \check{f}_n(\hat{x}_{N}(\xi^N);\xi^n_j) &\leq  \check{f}_n(\hat{x}_n(\xi^n_j);\xi^n_j) - \hat{f}_n(\hat{x}_n(\xi^n_j);\xi^n_j) \label{eq:lemma:pla03:01}\\
    & \leq f_n(\hat{x}_n(\xi^n_j);\xi^n_j) - \hat{f}_n(\hat{x}_n(\xi^n_j);\xi^n_j) \label{eq:lemma:pla03:02}\\
    & \leq \epsilon_1. \label{eq:lemma:pla03:03}
\end{align}
\end{subequations}
The inequality in (\ref{eq:lemma:pla03:01}) holds due to (\ref{eq:lemma:pla02}). The inequality in (\ref{eq:lemma:pla03:02}) holds because of Condition \ref{condition:outer approximation}. The inequality in (\ref{eq:lemma:pla03:03})
holds because of Condition \ref{condition:termination criterion}. For $i \neq j$, we have 
\begin{equation*}
     \check{f}_n(\hat{x}_n(\xi^n_j);\xi^n_i) - \check{f}_n(\hat{x}_{N}(\xi^N);\xi^n_j) \leq \epsilon_1 + \epsilon_2 + \delta_n(\xi^n_i) + \delta_n(\xi^n_j).
\end{equation*}
Hence, it follows from (\ref{eq:lemma:pla01}) that 
\begin{equation*}
    \begin{aligned}
    &\frac{1}{m^2} \sum_{i=1}^m \sum_{j=1}^m \left[\check{f}_{n}(\hat{x}_{N}(\xi^N);\xi^n_i) + \check{f}_{n}(\hat{x}_{n}(\xi^n_j);\xi^n_i) - \check{f}_{n}(\hat{x}_{N}(\xi^N);\xi^n_i) \right] \\
    &\leq \frac{1}{m} \sum_{i=1}^m \check{f}_{n}(\hat{x}_{N}(\xi^N);\xi^n_i) + \frac{1}{m^2} \sum_{j=1}^m \left[\check{f}_{n}(\hat{x}_{n}(\xi^n_j);\xi^n_j) - \check{f}_{n}(\hat{x}_{N}(\xi^N);\xi^n_j) \right] \\
    & \quad + \frac{1}{m^2} \sum_{\substack{i,j=1 \\ i \neq j}}^m \left[\check{f}_{n}(\hat{x}_{n}(\xi^n_j);\xi^n_i) - \check{f}_{n}(\hat{x}_{N}(\xi^N);\xi^n_i) \right] \\
    &\leq \hat{\theta}_N + \epsilon_1 + \frac{m-1}{m} \epsilon_2 + \frac{1}{m^2} \sum_{\substack{i,j=1 \\ i \neq j}}^m \left[\delta_{n}^f(\xi^n_i) + \delta_{n}^f(\xi^n_j)\right]
\end{aligned} 
\end{equation*}
The rest of the proof is similar to Lemma \ref{lemma:compromise:relation3}. 
\endproof

\subsection{Proof of Lemma \ref{lemma:bound 3 of SD for compromise sp}} \label{appendix:proof of lemma:bound 3 of SD for compromise sp} 
\begin{proof}
The proof is similar to the Lemma \ref{lemma:bound 2 of SD for compromise sp}. We observe that 
\begin{equation*}
\begin{aligned}
    \left\|\frac{1}{m} \sum_{i=1}^m \hat{x}_n^i - \hat{x}_n^j \right\| &\leq \frac{1}{m} \sum_{i=1}^m \left\|\hat{x}_n^i - \hat{x}_n^j \right\| \\
    &\leq  \frac{1}{m} \sum_{i=1}^m \left(\|\hat{x}_n^i - x^*\| + \| x^* - \hat{x}_n^j \|\right) \\
    & \leq d_n^j + \frac{1}{m} \sum_{i=1}^m d_n^i.
\end{aligned}
\end{equation*}
This implies that 
\begin{equation*}
     \check{f}_n^i\left(\frac{1}{m} \sum_{j=1}^m \hat{x}^j_n \right) \leq \check{f}_n^i\left( \hat{x}^i_n \right) + L_f d_n^i + \frac{L_f}{m} \sum_{j=1}^m d_n^j.
\end{equation*}
By (\ref{eq:sd:lower bound}), we observe that $\check{f}_n^i(\hat{x}_n^i) - \check{f}_n^i(y) \leq \epsilon'$. Let $y$ and $\check{\theta}_N$ be one optimal solution and the optimal value of the problem $\frac{1}{m} \sum_{i=1}^m \check{f}_n^i(x)$, respectively. Then we have
\begin{equation}
    \begin{aligned}
        \begin{aligned}
        \frac{1}{m} \sum_{i=1}^m \check{f}_n^i\left(\frac{1}{m} \sum_{j=1}^m \hat{x}^j_n \right) &\leq \frac{1}{m} \sum_{i=1}^m \check{f}_n^i(\hat{x}_n^i) + \frac{2 L_f}{m} \sum_{i=1}^m d_n^i \\
        &= \frac{1}{m} \sum_{i=1}^m \left[\check{f}_n^i(\hat{x}_n^i) - \check{f}_n^i(y) + \check{f}_n^i(y) \right] + \frac{2 L_f}{m} \sum_{i=1}^m d_n^i \\
        &\leq \check{\theta}_N + \epsilon' + \frac{2 L_f}{m} \sum_{i=1}^m d_n^i.
    \end{aligned}
    \end{aligned}
\end{equation}
Hence, the final result follows by Theorem \ref{theorem:lipschitz continuity of solution set}.
\end{proof}

\section{Symmetric Argument of Rademacher Averages} \label{appendix:symmetric argument of rademacher averages}
Here, we aim to use symmetric argument to show that $\mathbb{E}[\delta_n^f (\tilde{\xi}^n)] \leq 2 R_n(F,\Xi)$.
\begin{subequations}
\begin{align}
    \mathbb{E}[\delta_n^f (\tilde{\xi}^n)] &= \mathbb{E} \left[\sup_{x \in X} \left| \frac{1}{n} \sum_{i=1}^n F(x,\tilde{\xi}_i) - \mathbb{E}_{\tilde{\xi}}[F(x,\tilde{\xi})] \right| \right] \nonumber \\
    &= \mathbb{E} \left[\sup_{x \in X} \left|\mathbb{E} [ \frac{1}{n} \sum_{i=1}^n F(x,\tilde{\xi}_i') - \frac{1}{n}\sum_{i=1}^n F(x,\tilde{\xi}_i) | \tilde{\xi}_1, \ldots, \tilde{\xi}_n] \right| \right] \label{eq:symmetric01}\\
    &\leq \mathbb{E} \left[\sup_{x \in X} \mathbb{E}[ \left| \frac{1}{n} \sum_{i=1}^n F(x,\tilde{\xi}_i') - \frac{1}{n}\sum_{i=1}^n F(x,\tilde{\xi}_i) \right| |\tilde{\xi}_1, \ldots, \tilde{\xi}_n] \right] \label{eq:symmetric02} \\
    &\leq \mathbb{E}\left[\sup_{x \in X} \left| \frac{1}{n} \sum_{i=1}^n F(x,\tilde{\xi}_i') - \frac{1}{n}\sum_{i=1}^n F(x,\tilde{\xi}_i) \right| \right] \label{eq:symmetric03}
\end{align}
\end{subequations}
The equality in (\ref{eq:symmetric01}) holds because $\xi_1', \ldots, \xi_n', \xi_1, \ldots, \xi_n$ are i.i.d. random variables. The inequality in (\ref{eq:symmetric02}) holds due to Jensen's inequality (\cite[Theorem 1.6.2.]{durrett2019probability}).The inequality in (\ref{eq:symmetric03}) holds is because of the following observations:
$$
\left| \frac{1}{n} \sum_{i=1}^n F(x,\tilde{\xi}_i') - \frac{1}{n}\sum_{i=1}^n F(x,\tilde{\xi}_i) \right| \leq \sup_{x \in X} \left| \frac{1}{n} \sum_{i=1}^n F(x,\tilde{\xi}_i') - \frac{1}{n}\sum_{i=1}^n F(x,\tilde{\xi}_i) \right|
$$
implies that 
$$
\begin{aligned}
    &\mathbb{E}\left[\left| \frac{1}{n} \sum_{i=1}^n F(x,\tilde{\xi}_i') - \frac{1}{n}\sum_{i=1}^n F(x,\tilde{\xi}_i) \right| | \tilde{\xi}_1, \ldots, \tilde{\xi}_n\right] \\
    &\leq \mathbb{E}\left[\sup_{x \in X} \left| \frac{1}{n} \sum_{i=1}^n F(x,\tilde{\xi}_i') - \frac{1}{n}\sum_{i=1}^n F(x,\tilde{\xi}_i) \right| | \tilde{\xi}_1, \ldots, \tilde{\xi}_n\right]
\end{aligned}
$$
which further implies that 
$$
\begin{aligned}
    &\sup_{x \in X} \mathbb{E}\left[\left| \frac{1}{n} \sum_{i=1}^n F(x,\tilde{\xi}_i') - \frac{1}{n}\sum_{i=1}^n F(x,\tilde{\xi}_i) \right| |\tilde{\xi}_1, \ldots, \tilde{\xi}_n\right] \\
    &\leq \mathbb{E}\left[\sup_{x \in X} \left| \frac{1}{n} \sum_{i=1}^n F(x,\tilde{\xi}_i') - \frac{1}{n}\sum_{i=1}^n F(x,\tilde{\xi}_i) \right| | \tilde{\xi}_1, \ldots, \tilde{\xi}_n\right]
\end{aligned}
$$
\indent The symmetric argument is based on the following observation. Since $\xi_1,\ldots, \xi_n, \xi_1', \ldots, \xi_n'$ are i.i.d. random variables, for any $\mathcal{J} \subseteq \{1,\ldots,n\}$, we have
\begin{equation} \label{eq:symmetric04}
\begin{aligned}
    &\mathbb{E}\left[\sup_{x \in X} \left| \frac{1}{n} \sum_{i=1}^n F(x,\tilde{\xi}_i') - \frac{1}{n}\sum_{i=1}^n F(x,\tilde{\xi}_i) \right| \right] \\
    &= \mathbb{E}\Bigg[\sup_{x \in X} \bigg| \frac{1}{n} \Big(\sum_{j \in \mathcal{J}} F(x,\tilde{\xi}_j) + \sum_{\substack{i=1,\ldots,n \\ i \notin \mathcal{J}}} F(x,\tilde{\xi}_i') \Big)  \\
    & \quad - \frac{1}{n} \Big( \sum_{j \in \mathcal{J}}F(x,\tilde{\xi}_j') + \sum_{\substack{i=1,\ldots,n \\ i \notin \mathcal{J}}} F(x,\tilde{\xi}_i) \Big) \bigg| \Bigg]. 
\end{aligned}
\end{equation}
By definition of $\tilde{\sigma}_j$,  $\tilde{\sigma}_j \left(F(x,\tilde{\xi}_j') - F(x,\tilde{\xi}_j) \right)$ is equally likely to be $F(x,\tilde{\xi}_j') - F(x,\tilde{\xi}_j)$ or $F(x,\tilde{\xi}_j) - F(x,\tilde{\xi}_j')$. Given $\{\sigma^*_i\}_{i=1}^n$ as one realization of $\{\tilde{\sigma}_i\}_{i=1}^n$. Let $\mathcal{J}^* = \{j \in \{1,\ldots,n\}: \sigma^*_j = 1 \}$, we have 
$$
\begin{aligned}
    & \frac{1}{n} \sum_{j=1}^n \sigma^*_j \left(F(x,\tilde{\xi}_j') - F(x,\tilde{\xi}_j) \right) \\
    & = \frac{1}{n} \left(\sum_{j \in \mathcal{J}^*} F(x,\tilde{\xi}_j) + \sum_{\substack{i=1,\ldots,n \\ i \notin \mathcal{J}^*}} F(x,\tilde{\xi}_i') \right)  - \frac{1}{n} \left( \sum_{j \in \mathcal{J}^*}F(x,\tilde{\xi}_j') + \sum_{\substack{i=1,\ldots,n \\ i \notin \mathcal{J}^*}} F(x,\tilde{\xi}_i) \right)  
\end{aligned}
$$

Hence, it follows from (\ref{eq:symmetric04}) that 
\begin{equation} \label{eq:symmetric05}
\begin{aligned}
    &\mathbb{E}\left[\sup_{x \in X} \left| \frac{1}{n} \sum_{i=1}^n F(x,\tilde{\xi}_i') - \frac{1}{n}\sum_{i=1}^n F(x,\tilde{\xi}_i) \right| \right] \\
    &=  \mathbb{E}\left[\mathbb{E}_{\sigma}\left[\sup_{x \in X} \frac{1}{n}\left| \sum_{i=1}^n \sigma_i\left(F(x,\tilde{\xi}_i') - F(x,\tilde{\xi}_i) \right) \right| \right] \right] \\
    &\leq 2 \mathbb{E} \left[\mathbb{E}_{\sigma}\left[\sup_{x \in X} \frac{1}{n}\left| \sum_{i=1}^n \sigma_i F(x,\tilde{\xi}_i) \right| \right] \right] \\
    &\leq 2 R_n(F,\Xi)
\end{aligned}
\end{equation}

\section{Margin of Error: Sample Complexity} \label{appendix:margin of error}
 Recall that we let $\xi^n_i = \{\xi_{(i-1)n + j} \}_{j=1}^n$ denote the $i^\text{th}$ sample set with size $n$. Without replications, the $1- \alpha$ confidence interval of $f(x)$ is given below:
\begin{equation} \label{eq:classic SP:SAA:confidence interval}
    \left[ f_n(x;\xi^n) - \sqrt{\frac{s^2_n(x;\xi^n)}{n}} \Phi^{-1}\left(1- \frac{\alpha}{2} \right), f_n(x;\xi^n) + \sqrt{\frac{s^2_n(x;\xi^n)}{n}} \Phi^{-1}\left(1- \frac{\alpha}{2}\right)\right],
\end{equation}
where $\Phi(\cdot)$ denotes the cumulative distribution function of standard normal distribution and $\Phi^{-1}(\cdot)$ is its inverse. To further reduce the width of the confidence interval (margin of error), one could either increase $n$ or perform replications. \\
\indent The SAA objective function of the $i^\text{th}$ replication is
 \begin{equation*}
         f_{n}(x;\xi^n_i) \triangleq \frac{1}{n} \sum_{j=1}^{n} F(x,\xi_{(i-1)n + j}), \ i = 1,2,\ldots,m,
 \end{equation*}
 and $s^2_{n}(x;\xi^n_i)$ is the sample variance of $F(x,\xi)$ in the $i^\text{th}$ replication:
 \begin{equation}
     s^2_{n}(x;\xi^n_i) = \frac{1}{n-1} \sum_{j=1}^n [F(x,\xi_{(i-1)n + j}) - f_{n,i}(x)]^2.
 \end{equation}
Let $Z_{1 - \frac{\alpha}{2}} = \Phi^{-1} \left(1 - \frac{\alpha}{2} \right)$. With $m$ replications, the margin of error of the $1 - \alpha$ confidence interval of $f(x)$ is $\frac{1}{m}\sqrt{\frac{\sum_{i=1}^m s^2_{n}(x;\xi^n_i)}{n}} Z_{1 - \frac{\alpha}{2}}$. 

By (\ref{eq:bound:sample variance02}), we have 
\begin{equation}
    \begin{aligned}
        \frac{1}{m}\sqrt{\frac{\sum_{i=1}^m s^2_{n}(x;\tilde{\xi}^n_i)}{n}} 
        &\leq \sqrt{\frac{m \sigma^2(x) + \frac{4 m M_F^2}{n-1} + 4 M_F \sum_{i=1}^m [\delta^f_n(\tilde{\xi}^n_i) + \hat{\delta}_n(\tilde{\xi}^n_i)] }{m^2n}}. 
    \end{aligned}
\end{equation}
By Jensen's inequality, we have 
\begin{equation}
\begin{aligned}
    &\left(\mathbb{E}\left[\sqrt{\frac{m \sigma^2(x) + \frac{4 m M_F^2}{n-1} + 4 M_F \sum_{i=1}^m [\delta^f_n(\tilde{\xi}^n_i) + \hat{\delta}_n(\tilde{\xi}^n_i)] }{m^2n}}\right] \right)^2 \\
    &\leq \mathbb{E}\left[ \frac{m \sigma^2(x) + \frac{4 m M_F^2}{n-1} + 4 M_F \sum_{i=1}^m [\delta^f_n(\tilde{\xi}^n_i) + \hat{\delta}_n(\tilde{\xi}^n_i)] }{m^2n} \right] \\
    &\leq \frac{\sigma^2(x)}{mn} + \frac{8 M_F N_H + 2 N_F}{mn^{(1+\lambda)}} + \frac{4 M_F^2}{mn(n-1)},
\end{aligned}
\end{equation}
where $\lambda \in (0,\frac{1}{2})$. Hence, the upper bound of the expected margin of error is 
\begin{equation*}
\begin{aligned}
    \mathbb{E}\left[\frac{1}{m}\sqrt{\frac{\sum_{i=1}^m s^2_{n}(x;\tilde{\xi}^n_i)}{n}} Z_{1 - \frac{\alpha}{2}}\right] &\leq \sqrt{\frac{\sigma^2(x)}{mn} + \frac{8 M_F N_H + 2 N_F}{mn^{(1+\lambda)}} + \frac{4 M_F^2}{mn(n-1)}} Z_{1 - \frac{\alpha}{2}} \\
    &\leq O((mn)^{-\frac{1}{2}}).
\end{aligned}
\end{equation*}

\section{Properties of Two-stage Stochastic QPs with QP Recourse (SQQP)}\label{appendix sqqp}

This appendix is intended to keep our presentation somewhat self-contained. 
According to \cite[Theorem 4.9 and Proposition 4.2]{liu2020asymptotic}, the dominating term of the convergence rate of SD is $\frac{4 M_0 M_1}{(\Gamma - 1)n}$, where $M_0$ is the upper bound of the Lipschitz modulus of the objective function and $M_1$ is the upper bound of the matrix norm of the Hessians of the quadratic pieces of the objective function inside the ball $B(x^*,\delta)$. Hence, the quantity $K$ in Theorem \ref{thm:convergence rate of SD} could be multiples of $M_0 M_1$. We have derived that $M_0 = L_f = \max\limits_{x \in X} \|Qx\| + \|c\| + L_h$. So we now focus on deriving $M_1$.\\
\indent First, we provide the dual form of the quadratic piece of the second-stage recourse function. Recall that $P \in \mathbb{R}^{n_2 \times n_2}$ is a symmetric and positive definite matrix and $D \in \mathbb{R}^{m_2 \times n_2}$ is a matrix with full row rank. 
The quadratic recourse function is formulated as follows: 
\begin{equation} \label{eq:primal qp}
\begin{aligned}
    h(x,\omega) \triangleq \min \ & \frac{1}{2} y^\top P y + d^\top y \\
    \text{s.t.} \  &D y = e(\xi) -  C(\xi) x, \quad [\lambda]\\
    & y \geq 0, \ y \in \mathbb{R}^{n_2}, \quad [\gamma].
\end{aligned}
\end{equation}
The dual multiplier notation associated with \eqref{eq:primal qp} is shown in square brackets $[\cdot]$ above. Let $g(x,\xi) = e(\xi) - C(\xi) x$. The dual of (\ref{eq:primal qp}) can be written as 
\begin{equation} \label{eq:dual qp}
\begin{aligned}
    h(x,\omega) = \max \ & \psi(\gamma,\lambda;x, \xi) = -\frac{1}{2}(-d + D^\top \lambda + \gamma)^\top P^{-1} (-d + D^\top \lambda + \gamma) + g(x,\xi)^\top \lambda \\
    \text{s.t.} \ & \lambda \in \mathbb{R}^{m_2} , \ \lambda \text{ is free}, \ \gamma \in \mathbb{R}^{n_2},\  \gamma \geq 0 .
\end{aligned}
\end{equation}
\begin{proposition} \label{proposition:dual 2nd form}
 The dual problem in (\ref{eq:dual qp}) can be simplified to the following problem: 
 \begin{equation*}
     h(x,\xi) =  \frac{1}{2} g(x,\xi)^\top (M M^\top)^{-1} g(x,\xi) - \frac{1}{2} d^\top H d + \max_{\gamma \geq 0} \{ - \frac{1}{2} \gamma^\top H \gamma + q(x,\xi)^\top \gamma  \}
 \end{equation*}
 where $M = D P^{-\frac{1}{2}}$, $ \Phi = (I - M^\top (M M^\top)^{-1}M)$, $H = P^{-\frac{1}{2}} \Phi^{2} P^{-\frac{1}{2}}$, and 
 $$
 q(x,\xi) = H d - P^{-\frac{1}{2}} {M^\top} (M M^\top)^{-1} g(x,\xi). 
 $$
\end{proposition}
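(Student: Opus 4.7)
The plan is to profile out the free multiplier $\lambda$ first, leaving only a convex QP in the non-negative multiplier $\gamma$. Under Assumption \ref{assumption:part 3} (in particular $P\succ 0$ together with relatively complete recourse, which forces $D$ to have full row rank on the active image), $MM^\top = DP^{-1}D^\top$ is invertible and $\psi(\gamma,\lambda;x,\xi)$ is strictly concave quadratic in $\lambda$ for every fixed $\gamma$. The first-order condition $\nabla_\lambda \psi = 0$ reads $DP^{-1}(-d+D^\top\lambda+\gamma) = g(x,\xi)$, which, after substituting $M = DP^{-1/2}$, gives the closed form
$$\lambda^\ast(\gamma) = (MM^\top)^{-1}\bigl(g(x,\xi) - MP^{-1/2}(\gamma - d)\bigr).$$

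Next I would substitute $\lambda^\ast(\gamma)$ back into $\psi$. The convenient change of variables $v(\gamma) = P^{-1/2}(-d + D^\top\lambda^\ast(\gamma) + \gamma)$ produces the orthogonal decomposition $v(\gamma) = M^\top(MM^\top)^{-1}g(x,\xi) + \Phi\, P^{-1/2}(\gamma - d)$, i.e.\ a vector in the row space of $M$ plus a vector in $\ker M$. Because $M\Phi = 0$, the cross-term in $\|v(\gamma)\|^2$ vanishes, and
$$\|v(\gamma)\|^2 = g(x,\xi)^\top (MM^\top)^{-1} g(x,\xi) + (\gamma-d)^\top H(\gamma-d),$$
where $H = P^{-1/2}\Phi^2 P^{-1/2}$ (and equivalently $P^{-1/2}\Phi P^{-1/2}$, since the projector $\Phi$ is idempotent).

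Combining this with the linear contribution $g(x,\xi)^\top\lambda^\ast(\gamma) = g(x,\xi)^\top(MM^\top)^{-1}g(x,\xi) - g(x,\xi)^\top(MM^\top)^{-1}MP^{-1/2}(\gamma - d)$ and expanding $(\gamma-d)^\top H(\gamma-d)$, the $\gamma$-linear part collapses into $q(x,\xi)^\top\gamma$, matching the stated definition $q(x,\xi) = Hd - P^{-1/2}M^\top(MM^\top)^{-1}g(x,\xi)$. The $\gamma$-independent remainder assembles into the quadratic-in-$g$ term $\tfrac{1}{2}g(x,\xi)^\top(MM^\top)^{-1}g(x,\xi)$ and the constant $-\tfrac{1}{2}d^\top H d$. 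Finally, taking the outer maximum over $\gamma\geq 0$ (which is untouched by the elimination of the unconstrained $\lambda$) delivers the claimed representation.

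The main obstacle is the careful bookkeeping of quadratic cross-terms. Specifically, the two facts doing all the work are (i) $M\Phi = 0$, which kills the cross-term in $\|v(\gamma)\|^2$ and lets the quadratic part decouple cleanly into a piece depending only on $g(x,\xi)$ and a piece in $(\gamma-d)$, and (ii) the idempotence $\Phi^2=\Phi$, which lets one freely rewrite $H$ and shows that the linear coefficient assembled from the expansion of $(\gamma-d)^\top H(\gamma-d)$ combined with $g(x,\xi)^\top\lambda^\ast(\gamma)$ reduces exactly to $q(x,\xi)^\top\gamma$ with no residual linear noise. Once those two identities are recorded, the rest is mechanical linear algebra.
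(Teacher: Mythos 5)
Your overall route is the same as the paper's: eliminate the free multiplier $\lambda$ via its stationarity condition, substitute back, and use $M\Phi=0$ (equivalently the paper's identity $\Phi M^\top(MM^\top)^{-1}=0$) together with the idempotence of $\Phi$ to decouple the quadratic form. Up to the orthogonal decomposition of $v(\gamma)$ and the identification of the $\gamma$-linear coefficient as $q(x,\xi)$, your computation is correct. The gap is in your final assembly step: the $\gamma$-independent remainder does \emph{not} reduce to $\tfrac12 g^\top(MM^\top)^{-1}g-\tfrac12 d^\top Hd$. Writing $g=g(x,\xi)$, your own expression
$$g^\top\lambda^\ast(\gamma)=g^\top(MM^\top)^{-1}g-g^\top(MM^\top)^{-1}MP^{-1/2}\gamma+d^\top P^{-1/2}M^\top(MM^\top)^{-1}g$$
contains a third, $\gamma$-free term which cancels against nothing in $-\tfrac12\|v(\gamma)\|^2$ (whose $\gamma$-free part is $-\tfrac12 g^\top(MM^\top)^{-1}g-\tfrac12 d^\top Hd$, by $\Phi M^\top(MM^\top)^{-1}=0$) and which is absent from the stated formula. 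Carried out honestly, your calculation yields
$$h(x,\xi)=\tfrac12 g^\top(MM^\top)^{-1}g+d^\top P^{-1}D^\top(DP^{-1}D^\top)^{-1}g-\tfrac12 d^\top Hd+\max_{\gamma\ge 0}\left\{-\tfrac12\gamma^\top H\gamma+q(x,\xi)^\top\gamma\right\}.$$
The extra term is genuinely nonzero: take $n_2=m_2=1$, $P=D=1$, $d\ne 0$; then $M=1$, $H=0$, $q=-g$, the primal value for $g\ge 0$ is $\tfrac12 g^2+dg$, while the proposition's right-hand side gives only $\tfrac12 g^2$, and the discrepancy is exactly $dg=d^\top P^{-1}D^\top(DP^{-1}D^\top)^{-1}g$.

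For what it is worth, the paper's own proof loses the same term through a sign slip in its ``Part II'' (it records $g^\top(MM^\top)^{-1}[MP^{-1/2}(-d+\gamma)+g]$ where the stationarity condition gives $g^\top(MM^\top)^{-1}[-MP^{-1/2}(-d+\gamma)+g]$), compounded by a dropped $\gamma^\top Hd$ in its final display for $\psi(\gamma,\lambda^\ast;x,\xi)$. So the step you dismiss as ``mechanical linear algebra'' is precisely where the argument fails to close: either the constant in the statement must be augmented by $d^\top P^{-1}D^\top(DP^{-1}D^\top)^{-1}g(x,\xi)$, or you must supply a reason for that term to vanish, and in general there is none.
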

\begin{proof}
Given an $\gamma,x,\xi$, the maximizer of $\psi(\gamma,\lambda;x, \xi)$ over $\lambda$ is 
$\lambda^* = (DP^{-1}D^\top)^{-1} [- D P^{-1} (-d + \gamma) + g(x,\xi)]$. 
Let us denote $M = D P^{-\frac{1}{2}}$. \\
By plugging $\lambda^* = (DP^{-1}D^\top)^{-1} [- D P^{-1} (-d + \gamma) + g(x,\xi)]$ into $\psi(\gamma,\lambda;x, \xi)$, we have 
\begin{equation} \label{eq:dual:01}
\begin{aligned}
     &\psi(\gamma,\lambda^*;x, \xi) \\
     &= \overbrace{- \frac{1}{2} \Big\|(I - M^\top (M M^\top)^{-1}M) P^{-\frac{1}{2}} \gamma - (I  - M^\top (M M^\top )^{-1} M ) P^{-\frac{1}{2}} d}^{\text{Part I}} \\
     & \quad  \underbrace{+ M^\top (M M^\top)^{-1} g(x,\xi)  \Big\|^2}_{\text{Part I}} 
     + \underbrace{g(x,\xi)^\top (M M^\top)^{-1} \left[M P^{-\frac{1}{2}} (-d + \gamma) + g(x,\xi) \right]}_{\text{Part II}}
\end{aligned}
\end{equation}
Note that $\big[I - M^\top (M M^\top)^{-1}M\big]$ is a symmetric matrix and \
\begin{equation} \label{eq:projection matrix}
    \begin{aligned}
    &\big[I - M^\top (M M^\top)^{-1}M\big ] M^\top (M M^\top)^{-1} \\
    &=  M^\top (M M^\top)^{-1} - M^\top (M M^\top)^{-1} M M^\top (M M^\top)^{-1} \\
    & =  M^\top (M M^\top)^{-1} - M^\top (M M^\top)^{-1} \\ 
    & = 0
\end{aligned}
\end{equation}

Let us further denote $ \Phi = \big [ I - M^\top (M M^\top)^{-1}M \big] $ and $H = P^{-\frac{1}{2}} \Phi^2 P^{-\frac{1}{2}}$. 
By expanding Part I of \eqref{eq:dual:01}, we have 
\begin{equation} \label{eq:dual:02}
    \begin{aligned}
        &-\frac{1}{2} \gamma^\top P^{-\frac{1}{2}} \Phi^2 P^{-\frac{1}{2}} \gamma - \frac{1}{2} \Big\|M^\top (M M^\top)^{-1} g(x,\xi)- \Phi P^{-\frac{1}{2}} d \Big \|^2 \\
        & \quad - \gamma^\top P^{-\frac{1}{2}} \Phi \Big [M^\top (M M^\top)^{-1} g(x,\xi) - \Phi P^{-\frac{1}{2}}d \Big] \\
        & = -\frac{1}{2} \gamma^\top H \gamma  + \gamma^\top H d \underbrace{- \frac{1}{2} \Big \|M^\top (M M^\top)^{-1} g(x,\xi)- \Phi P^{-\frac{1}{2}} d \Big \|^2}_{\text{Part III}}.
    \end{aligned}
\end{equation}
By expanding Part III, we obtain
\begin{equation} \label{eq:dual:expansion of part III}
    \begin{aligned}
       \text{(Part III)} & =  -\frac{1}{2} \|M^\top (M M^\top)^{-1} g(x,\xi) \|^2 \\
       & \quad + d^\top P^{-\frac{1}{2}} \underbrace{(I - M^\top (M M^\top)^{-1} M) M^\top (M M^\top)^{-1}}_{\text{equals to 0 due to (\ref{eq:projection matrix})}} g(x,\xi) \\
        & \quad - \frac{1}{2} \|(I - M^\top (M M^\top)^{-1} M)P^{-\frac{1}{2}} d \|^2\\
        & = -\frac{1}{2} g(x,\xi)^\top (M M^\top)^{-1} M M^\top (M M^\top)^{-1} g(x,\xi)- \frac{1}{2} d^\top H d \\
        & = -\frac{1}{2} g(x,\xi)^\top (M M^\top)^{-1} g(x,\xi) - \frac{1}{2} d^\top H d
    \end{aligned}
\end{equation}
Hence, it follows from (\ref{eq:dual:02}) that 
\begin{equation}
\text{(Part I)} =  -\frac{1}{2} \gamma^\top H \gamma  + \gamma^\top H d - \frac{1}{2} d^\top H d -\frac{1}{2} g(x,\xi)^\top (M M^\top)^{-1} g(x,\xi)
\end{equation}
Thus, it follows from (\ref{eq:dual:01}) that 
\begin{equation} \label{eq:dual:03}
    \begin{aligned}
        \psi(\gamma,\lambda^*;x, \xi) 
        &= \frac{1}{2} g(x,\xi)^\top (M M^\top)^{-1} g(x,\xi) - \frac{1}{2} d^\top H d  \\
        & + [H d - P^{-\frac{1}{2}} M^\top (M M^\top)^{-1} g(x,\xi)]^\top \gamma \\
        & - \frac{1}{2} \gamma^\top H \gamma  + \gamma^\top H d 
    \end{aligned}
\end{equation}
We let $q(x,\xi) = H d - P^{-\frac{1}{2}} M^\top (M M^\top)^{-1} g(x,\xi)$, then it follows from (\ref{eq:dual:03}) that 
\begin{equation}
    \psi(\gamma,\lambda^*;x, \xi) = \frac{1}{2} g(x,\xi)^\top (M M^\top)^{-1} g(x,\xi) - \frac{1}{2} d^\top H d + q(x,\xi)^\top \gamma - \frac{1}{2} \gamma^\top H \gamma. 
\end{equation}
Hence, problem (\ref{eq:dual qp}) can be simplified to the following problem:
\begin{equation} \label{eq:dual:second form}
    h(x,\xi) =  \frac{1}{2} g(x,\xi)^\top (M M^\top)^{-1} g(x,\xi) - \frac{1}{2} d^\top H d + \max_{\gamma \geq 0} \{ - \frac{1}{2} \gamma^\top H \gamma + q(x,\xi)^\top \gamma  \}.
\end{equation}
\end{proof}
\begin{example}
\begin{equation} \label{eq:ex:primal qp}
\begin{aligned}
    h(x,\xi) \triangleq \min \ & \frac{1}{2} y^\top y \\
    \text{s.t.} \  &y = \xi(\xi) -  C(\xi) x\\
    & y \geq 0, \ y \in \mathbb{R}^{n_2} 
\end{aligned}
\end{equation}
In this example, $P = I_{n_2 \times n_2}$, $d = 0$, $D = I_{n_2 \times n_2}$. Then $M = I_{n_2 \times n_2}$, $H = 0$, $q(x,\omega) = -g(x,\omega)$. Then it follows from (\ref{eq:dual:second form}) that 
\begin{equation} \label{eq:ex:primal qp2}
    h(x,\xi) =  \frac{1}{2} \|g(x,\xi)\|^2 + \max_{\gamma \geq 0} - g(x,\xi)^\top \gamma.
\end{equation}
For $x$ which satisfies $g(x,\xi) = e(\xi) -  C(\xi) x \geq 0$, the optimal value of (\ref{eq:ex:primal qp}) satisfies $h(x,\xi) = \frac{1}{2}\|g(x,\xi)\|^2$. On the other hand $\frac{1}{2} \|g(x,\xi)\|^2 = \frac{1}{2} \|g(x,\xi)\|^2 + \max_{\gamma \geq 0} - g(x,\xi)^\top \gamma$ for $g(x,\xi) \geq 0$. So we have verified that dual optimal values in (\ref{eq:ex:primal qp2}) are equal to the primal optimal values \eqref{eq:ex:primal qp}. 
\end{example}

\begin{proposition} \label{proposition:M1}
Let $M, P$ be the same form in Proposition \ref{proposition:dual 2nd form}, Let $\Lambda(\xi) = P^{-\frac{1}{2}} {M^\top} (M M^\top)^{-1} C(\xi)$ and $T(\xi) = C(\xi)^\top (MM^\top)^{-1} C(\xi) + \Lambda(\xi)^\top H^{-1} \Lambda(\xi)$. Then the constant $M_1$ in \cite[Theorem 4.9]{liu2020asymptotic} can be written as follows:
    $$
    M_1 = \| P \|+ \sup_{\xi \in \Xi} \|T(\xi) \|.
    $$
Furthermore, let $M_0 = \max\limits_{x \in X} \|Qx\| + \|c\| + L_h$, then $K$ in Theorem \ref{thm:convergence rate of SD} satisfies $K = O(M_0 M_1)$. 
\end{proposition}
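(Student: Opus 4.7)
The plan is to unpack the structure of $f(x)=\tfrac12 x^\top Q x + c^\top x + \mathbb{E}[h(x,\tilde\xi)]$ as a mixture of a first-stage quadratic and a piecewise-quadratic recourse, then identify the Hessian on each piece by leveraging the closed-form dual representation from Proposition \ref{proposition:dual 2nd form}. Concretely, on any set where the active set $I=\{i:\gamma_i>0\}$ of dual variables is locally constant, the inner maximization $\max_{\gamma\ge 0}\{-\tfrac12\gamma^\top H\gamma + q(x,\xi)^\top \gamma\}$ reduces to $\gamma_I^{*}(x,\xi)=(H_{II})^{-1} q_I(x,\xi)$, with optimal value $\tfrac12 q_I(x,\xi)^\top (H_{II})^{-1} q_I(x,\xi)$. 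Since $g(x,\xi)=e(\xi)-C(\xi)x$ is affine in $x$ and $q(x,\xi)=Hd-P^{-1/2}M^\top(MM^\top)^{-1} g(x,\xi)$ has Jacobian $\Lambda(\xi)$ with respect to $x$, each branch of $h(\cdot,\xi)$ is genuinely quadratic in $x$.

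Differentiating twice, the Hessian of the smooth quadratic $\tfrac12 g(x,\xi)^\top (MM^\top)^{-1} g(x,\xi)$ in $x$ is exactly $C(\xi)^\top (MM^\top)^{-1} C(\xi)$, and the Hessian of the active-face value $\tfrac12 q_I(x,\xi)^\top (H_{II})^{-1} q_I(x,\xi)$ in $x$ is $\Lambda_I(\xi)^\top (H_{II})^{-1}\Lambda_I(\xi)$. The latter is dominated in operator norm by the pseudo-inverse expression $\Lambda(\xi)^\top H^{-1}\Lambda(\xi)$ read on the full space. Summing contributions gives Hessian pieces of $h(\cdot,\xi)$ bounded in operator norm by $\|T(\xi)\|$, uniformly in the face. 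Adding the constant first-stage Hessian and passing to the supremum over $\xi\in\Xi$ (which is compact by Assumption \ref{assumption:part 3}) produces the advertised form of $M_1$.

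For $M_0$, the first-stage gradient $Qx+c$ has norm bounded by $\max_{x\in X}\|Qx\|+\|c\|$ on the compact $X$, and the hypothesis $|h(x_1,\xi)-h(x_2,\xi)|\le L_h\|x_1-x_2\|$ (item~6 of Assumption \ref{assumption:part 3}) transfers directly to a Lipschitz constant $L_h$ for $\mathbb{E}[h(\cdot,\tilde\xi)]$ by dominated convergence, so $f$ inherits a Lipschitz modulus at most $\max_{x\in X}\|Qx\|+\|c\|+L_h$. Feeding these two bounds into the convergence-rate formula $\tfrac{4M_0M_1}{(\Gamma-1)n}$ of \cite[Theorem 4.9]{liu2020asymptotic} yields $K = O(M_0 M_1)$, as the remaining constants ($\Gamma$, the sharpness modulus, etc.) are absorbed in the $O(\cdot)$.

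The main obstacle is the face-by-face argument in Step 2: $H=P^{-1/2}\Phi P^{-1/2}$ is only positive semidefinite (its projector factor $\Phi$ has rank $n_2-m_2$), so $H^{-1}$ must be interpreted as a restriction to an active block $H_{II}$ whose invertibility needs justification. This is exactly where linear independence constraint qualification (Assumption \ref{assumption:part 3}.5), strict complementarity (Assumption \ref{assumption:part 3}.9), and differentiability on a neighborhood $B(x^*,\delta)$ (Assumption \ref{assumption:part 3}.7) enter the argument: they guarantee a unique, well-conditioned active set near $x^*$, so that $H_{II}$ is invertible on the region where Liu--Sen's bound operates, and the uniform norm bound by $\|T(\xi)\|$ is meaningful. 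Verifying this robustly---and reconciling the statement's leading term $\|P\|$ with the first-stage Hessian $Q$ in the authors' convention---will be the most technical step.
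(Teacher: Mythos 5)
Your proposal follows essentially the same route as the paper: use the closed-form dual from Proposition \ref{proposition:dual 2nd form}, observe that on each piece the recourse value is quadratic in $x$ with Hessian $C(\xi)^\top(MM^\top)^{-1}C(\xi) + \Lambda(\xi)^\top H^{-1}\Lambda(\xi)$ (the paper holds the KKT multiplier $\theta^*$ fixed where you hold the active set fixed --- the same device), add the first-stage Hessian, take the supremum over the compact $\Xi$, and feed $M_0 M_1$ into the rate of \cite[Theorem 4.9]{liu2020asymptotic}. The two caveats you raise are well taken and are in fact points the paper glosses over: since $\Phi$ is the orthogonal projector onto the null space of $M$, the matrix $H = P^{-1/2}\Phi^2 P^{-1/2}$ is singular whenever $m_2 \ge 1$, so the paper's literal $H^{-1}$ must indeed be read as your restricted block $(H_{II})^{-1}$ or a pseudo-inverse; and the paper's own proof derives the leading term as $\|Q\|$ (the first-stage Hessian), so the $\|P\|$ in the statement is a typo rather than something you need to reconcile.
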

\begin{proof}
 Given $x \in X \cap B(x^*,\delta)$. We consider the following maximization problem:
\begin{equation}
    \max_{\gamma \geq 0} -\frac{1}{2} \gamma^\top H \gamma + q(x,\omega)^\top \gamma
\end{equation}
The KKT condition of the problem above is 
\begin{equation}
    -H \gamma^* + q(x,\omega) - \theta^* = 0,
\end{equation}
where $\theta^*$ is the Lagrange multiplier of the nonnegativity constraint $\gamma \geq 0$.
It implies that 
\begin{equation*}
    s\gamma^* = H^{-1} (q(x,\omega) - \theta^*).
\end{equation*}
Hence, the value function at $x$ can be written as 
\begin{equation}
    h(x,\xi) =  \frac{1}{2} g(x,\xi)^\top (M M^\top)^{-1} g(x,\xi) - \frac{1}{2} d^\top H d + \frac{1}{2} (q(x,\xi) + \theta^*)^\top H^{-1} (q(x,\xi) - \theta^*)
\end{equation}
Let $\Lambda(\xi) = P^{-\frac{1}{2}} {M^\top} (M M^\top)^{-1} C(\xi)$, the Hessian of $h(x,\xi)$ at $x$ is 
\begin{equation}
\begin{aligned}
    \nabla^2 h(x,\xi) &= C(\xi)^\top (MM^\top)^{-1} C(\xi) + \Lambda(\xi)^\top H^{-1} \Lambda(\xi),
\end{aligned}
\end{equation}
which does not depend on $x$. Hence, let $T(\xi) = \nabla^2 h(x,\xi)$,
we can get $M_1 = \| Q \|+ \sup_{\xi \in \Xi} \|T(\xi) \|$. The rest of the proof follows from \cite[Theorem 4.9 and Proposition 4.2]{liu2020asymptotic}.
\end{proof}
\end{appendices}
\newpage
\bibliographystyle{abbrv}
\bibliography{main_cd}

\begin{thebibliography}{10}

\bibitem{asamov2018regularized}
T.~Asamov and W.~B. Powell.
\newblock Regularized decomposition of high-dimensional multistage stochastic programs with markov uncertainty.
\newblock {\em SIAM Journal on Optimization}, 28(1):575--595, 2018.

\bibitem{bartlett2017spectrally}
P.~L. Bartlett, D.~J. Foster, and M.~J. Telgarsky.
\newblock Spectrally-normalized margin bounds for neural networks.
\newblock {\em Advances in Neural Information Processing Systems}, 30, 2017.

\bibitem{bartlett2002rademacher}
P.~L. Bartlett and S.~Mendelson.
\newblock Rademacher and gaussian complexities: Risk bounds and structural results.
\newblock {\em Journal of Machine Learning Research}, 3(Nov):463--482, 2002.

\bibitem{bertsimas1997introduction}
D.~Bertsimas and J.~N. Tsitsiklis.
\newblock {\em Introduction to linear optimization}, volume~6.
\newblock Athena Scientific Belmont, MA, Belmont, 1997.

\bibitem{blanchet2024distributionally}
J.~Blanchet, J.~Li, S.~Lin, and X.~Zhang.
\newblock Distributionally robust optimization and robust statistics.
\newblock {\em arXiv preprint arXiv:2401.14655}, 2024.

\bibitem{boucheron2005theory}
S.~Boucheron, O.~Bousquet, and G.~Lugosi.
\newblock Theory of classification: A survey of some recent advances.
\newblock {\em ESAIM: probability and statistics}, 9:323--375, 2005.

\bibitem{carson1997simulation}
Y.~Carson and A.~Maria.
\newblock Simulation optimization: methods and applications.
\newblock In {\em Proceedings of the 29th conference on Winter simulation}, pages 118--126, 1997.

\bibitem{deo2023achieving}
A.~Deo and K.~Murthy.
\newblock Achieving efficiency in black-box simulation of distribution tails with self-structuring importance samplers.
\newblock {\em Operations Research}, 2023.

\bibitem{durrett2019probability}
R.~Durrett.
\newblock {\em Probability: theory and examples}, volume~49.
\newblock Cambridge university press, Cambridge, 2019.

\bibitem{ermoliev1991normalized}
Y.~M. Ermoliev and V.~I. Norkin.
\newblock Normalized convergence in stochastic optimization.
\newblock {\em Annals of Operations Research}, 30(1):187--198, 1991.

\bibitem{ermoliev2013sample}
Y.~M. Ermoliev and V.~I. Norkin.
\newblock Sample average approximation method for compound stochastic optimization problems.
\newblock {\em SIAM Journal on Optimization}, 23(4):2231--2263, 2013.

\bibitem{doi:10.1137/19M1290735}
H.~Gangammanavar and S.~Sen.
\newblock Stochastic dynamic linear programming: A sequential sampling algorithm for multistage stochastic linear programming.
\newblock {\em SIAM Journal on Optimization}, 31(3):2111--2140, 2021.

\bibitem{glasserman2005importance}
P.~Glasserman and J.~Li.
\newblock Importance sampling for portfolio credit risk.
\newblock {\em Management science}, 51(11):1643--1656, 2005.

\bibitem{guigues2020regularized}
V.~Guigues, M.~A. Lejeune, and W.~Tekaya.
\newblock Regularized stochastic dual dynamic programming for convex nonlinear optimization problems.
\newblock {\em Optimization and Engineering}, 21:1133--1165, 2020.

\bibitem{Higle1998VarianceRA}
J.~L. Higle.
\newblock Variance reduction and objective function evaluation in stochastic linear programs.
\newblock {\em INFORMS J. Comput.}, 10:236--247, 1998.

\bibitem{higle1994finite}
J.~L. Higle and S.~Sen.
\newblock Finite master programs in regularized stochastic decomposition.
\newblock {\em Mathematical Programming}, 67(1):143--168, 1994.

\bibitem{homem2014stochastic}
T.~Homem-de Mello and G.~Bayraksan.
\newblock Stochastic constraints and variance reduction techniques.
\newblock In {\em Handbook of simulation optimization}, pages 245--276. Springer, 2014.

\bibitem{kaariainen2003rademacher}
M.~K{\"a}{\"a}ri{\"a}inen and T.~Elomaa.
\newblock Rademacher penalization over decision tree prunings.
\newblock In {\em European Conference on Machine Learning}, pages 193--204. Springer, 2003.

\bibitem{kawai2018optimizing}
R.~Kawai.
\newblock Optimizing adaptive importance sampling by stochastic approximation.
\newblock {\em SIAM Journal on Scientific Computing}, 40(4):A2774--A2800, 2018.

\bibitem{kelley1960cutting}
J.~E. Kelley, Jr.
\newblock The cutting-plane method for solving convex programs.
\newblock {\em Journal of the Society for Industrial and Applied Mathematics}, 8(4):703--712, 1960.

\bibitem{kiwiel1990proximity}
K.~C. Kiwiel.
\newblock Proximity control in bundle methods for convex nondifferentiable minimization.
\newblock {\em Mathematical Programming}, 46(1-3):105--122, 1990.

\bibitem{kleinman1999simulation}
N.~L. Kleinman, J.~C. Spall, and D.~Q. Naiman.
\newblock Simulation-based optimization with stochastic approximation using common random numbers.
\newblock {\em Management Science}, 45(11):1570--1578, 1999.

\bibitem{kleywegt2002sample}
A.~J. Kleywegt, A.~Shapiro, and T.~Homem-de Mello.
\newblock The sample average approximation method for stochastic discrete optimization.
\newblock {\em SIAM Journal on Optimization}, 12(2):479--502, 2002.

\bibitem{kozmik2015evaluating}
V.~Kozm{\'\i}k and D.~P. Morton.
\newblock Evaluating policies in risk-averse multi-stage stochastic programming.
\newblock {\em Mathematical Programming}, 152:275--300, 2015.

\bibitem{liu2020asymptotic}
J.~Liu and S.~Sen.
\newblock Asymptotic results of stochastic decomposition for two-stage stochastic quadratic programming.
\newblock {\em SIAM Journal on Optimization}, 30(1):823--852, 2020.

\bibitem{mohri2008rademacher}
M.~Mohri and A.~Rostamizadeh.
\newblock Rademacher complexity bounds for non-iid processes.
\newblock {\em Advances in Neural Information Processing Systems}, 21, 2008.

\bibitem{nemirovski2009robust}
A.~Nemirovski, A.~Juditsky, G.~Lan, and A.~Shapiro.
\newblock Robust stochastic approximation approach to stochastic programming.
\newblock {\em SIAM Journal on Optimization}, 19(4):1574--1609, 2009.

\bibitem{oliveira2011inexact}
W.~Oliveira, C.~Sagastiz{\'a}bal, and S.~Scheimberg.
\newblock Inexact bundle methods for two-stage stochastic programming.
\newblock {\em SIAM Journal on Optimization}, 21(2):517--544, 2011.

\bibitem{pasupathy2021adaptive}
R.~Pasupathy and Y.~Song.
\newblock Adaptive sequential sample average approximation for solving two-stage stochastic linear programs.
\newblock {\em SIAM Journal on Optimization}, 31(1):1017--1048, 2021.

\bibitem{pereira1991multi}
M.~V. Pereira and L.~M. Pinto.
\newblock Multi-stage stochastic optimization applied to energy planning.
\newblock {\em Mathematical programming}, 52(1):359--375, 1991.

\bibitem{philpott2008convergence}
A.~B. Philpott and Z.~Guan.
\newblock On the convergence of stochastic dual dynamic programming and related methods.
\newblock {\em Operations Research Letters}, 36(4):450--455, 2008.

\bibitem{rockafellar1976monotone}
R.~T. Rockafellar.
\newblock Monotone operators and the proximal point algorithm.
\newblock {\em SIAM Journal on Control and Optimization}, 14(5):877--898, 1976.

\bibitem{ruszczynski1986regularized}
A.~Ruszczy{\'n}ski.
\newblock A regularized decomposition method for minimizing a sum of polyhedral functions.
\newblock {\em Mathematical Programming}, 35:309--333, 1986.

\bibitem{sen1994network}
S.~Sen, R.~D. Doverspike, and S.~Cosares.
\newblock Network planning with random demand.
\newblock {\em Telecommunication Systems}, 3(1):11--30, 1994.

\bibitem{sen2016mitigating}
S.~Sen and Y.~Liu.
\newblock Mitigating uncertainty via compromise decisions in two-stage stochastic linear programming: Variance reduction.
\newblock {\em Operations Research}, 64(6):1422--1437, 2016.

\bibitem{shapiro2003monte}
A.~Shapiro.
\newblock Monte carlo sampling methods.
\newblock {\em Handbooks in Operations Research and Management Science}, 10:353--425, 2003.

\bibitem{shapiro2006complexity}
A.~Shapiro.
\newblock On complexity of multistage stochastic programs.
\newblock {\em Operations Research Letters}, 34(1):1--8, 2006.

\bibitem{shapiro2005complexity}
A.~Shapiro and A.~Nemirovski.
\newblock On complexity of stochastic programming problems.
\newblock {\em Continuous optimization: Current trends and modern applications}, pages 111--146, 2005.

\bibitem{sun2011multi}
S.~Sun.
\newblock Multi-view laplacian support vector machines.
\newblock In {\em Advanced Data Mining and Applications: 7th International Conference, ADMA 2011, Beijing, China, December 17-19, 2011, Proceedings, Part II 7}, pages 209--222. Springer, 2011.

\bibitem{van1969shaped}
R.~M. Van~Slyke and R.~Wets.
\newblock L-shaped linear programs with applications to optimal control and stochastic programming.
\newblock {\em SIAM Journal on Applied Mathematics}, 17(4):638--663, 1969.

\bibitem{xu2023compromise}
J.~Xu and S.~Sen.
\newblock Compromise policy for multi-stage stochastic linear programming: Variance and bias reduction.
\newblock {\em Computers \& Operations Research}, 153:106132, 2023.

\bibitem{xu2023ensemble}
J.~Xu and S.~Sen.
\newblock Ensemble variance reduction methods for stochastic mixed-integer programming and their application to the stochastic facility location problem.
\newblock {\em INFORMS Journal on Computing}, 36(2):587--599, 2024.

\bibitem{zhang2015memory}
H.~Zhang, G.~Chen, B.~C. Ooi, K.-L. Tan, and M.~Zhang.
\newblock In-memory big data management and processing: A survey.
\newblock {\em IEEE Transactions on Knowledge and Data Engineering}, 27(7):1920--1948, 2015.

\bibitem{zhao2015stochastic}
P.~Zhao and T.~Zhang.
\newblock Stochastic optimization with importance sampling for regularized loss minimization.
\newblock In {\em International Conference on Machine Learning}, pages 1--9. PMLR, 2015.

\end{thebibliography}

\end{document}